\documentclass{amsart}
\usepackage{graphicx}
\usepackage{amssymb,amscd,amsthm,amsxtra}
\usepackage{latexsym}
\usepackage{epsfig}
\usepackage{color}

\vfuzz2pt 
\hfuzz2pt 
\newtheorem{thm}{Theorem}[section]
\newtheorem{cor}[thm]{Corollary}
\newtheorem{lem}[thm]{Lemma}
\newtheorem{prop}[thm]{Proposition}
\theoremstyle{definition}
\newtheorem{defn}[thm]{Definition}
\theoremstyle{remark}
\newtheorem{rem}[thm]{\bf Remark}
\numberwithin{equation}{section}

\newcommand{\R}{\mathbb R}
\newcommand{\be}{\begin{equation}}
\newcommand{\ee}{\end{equation}}

\newcommand{\ep}{\eps}

\newcommand{\eps}{\varepsilon}

\newcommand{\comment}[1]{}

\begin{document}

\title[Regularity of Lipschitz free boundaries for the $p(x)$-Laplacian]{Regularity of Lipschitz free boundaries for a $p(x)$-Laplacian problem with right hand side}

\author[Fausto Ferrari]{Fausto Ferrari}
\address{Dipartimento di Matematica dell'Universit\`a di Bologna, Piazza di Porta S. Donato, 5, 40126 Bologna, Italy.}
\email{\tt fausto.ferrari@unibo.it}
\author[Claudia Lederman]{Claudia Lederman}
\address{IMAS - CONICET and Departamento  de
Ma\-te\-m\'a\-ti\-ca, Facultad de Ciencias Exactas y Naturales,
Universidad de Buenos Aires, (1428) Buenos Aires, Argentina.}
\email{\tt  clederma@dm.uba.ar}
\thanks{F. F. was partially supported by INDAM-GNAMPA 2019 project: {\it Proprietà di regolarit\`a delle soluzioni viscose con applicazioni a problemi di frontiera libera} and INDAM-GNAMPA 2022 project: {\it Regolarit\`a locale e globale per
problemi completamente
non lineari. } }
\thanks{C. L. was partially supported by the project GHAIA Horizon 2020 MCSA RISE  2017 programme grant 777822 and  by the grants CONICET PIP 11220150100032CO 2016-2019,  UBACYT 20020150100154BA and ANPCyT PICT 2019-00985. C. L. wishes to thank the Department of Mathematics of the University of Bologna, Italy, for the kind hospitality.}
\keywords{free boundary problem, singular/degenerate operator, variable exponent spaces,  regularity of the free boundary, non-zero right hand side, viscosity solutions, Harnack inequality, optimal regularity.
\\
\indent 2020 {\it Mathematics Subject Classification.} 35R35,
35B65, 35J60, 35J70}

\begin{abstract}
We continue our study in \cite{FL} on viscosity solutions to a one-phase free boundary problem for the $p(x)$-Laplacian with non-zero right hand side. We first prove that viscosity solutions are locally Lipschitz continuous, which is the optimal regularity for the problem. Then we prove that Lipschitz free boundaries of viscosity solutions are $C^{1,\alpha}$. We also present some applications of our results.

Moreover, we obtain new results for the operator under consideration that are of independent interest, such as a Harnack inequality.
\end{abstract}

\maketitle

\section{Introduction and main results}\label{section1}

In this paper we continue our  study in \cite{FL} on a one-phase  free boundary problem governed by the $p(x)$-Laplacian  with non-zero right hand side. More precisely, we denote  by
$$
\Delta_{p(x)}u:=\mbox{div} (|\nabla u|^{p(x)-2}\nabla u),
$$
where $p$ is a  function such that $1<p(x)<+\infty$. Then our problem is the following:
\begin{equation}  \label{fb}
\left\{
\begin{array}{ll}
\Delta_{p(x)} u = f, & \hbox{in $\Omega^+(u):= \{x \in \Omega : u(x)>0\}$}, \\
\  &  \\
|\nabla u|= g, & \hbox{on $F(u):= \partial \Omega^+(u) \cap
\Omega.$} 
\end{array}
\right.
\end{equation}
Here $\Omega  \subset \mathbb{R}^n$ is a bounded domain,  $p\in C^1(\Omega)$ is a Lipschitz continuous function,  
$f\in C(\Omega)\cap L^{\infty}(\Omega)$ and $g\in C^{0, \beta}(\Omega)\cap L^{\infty}(\Omega),$ $g> 0.$ 

This problem comes out  naturally from limits of a singular perturbation problem with
forcing term as in \cite{LW1}, where  solutions to  
\eqref{fb}, arising in the study of flame propagation with nonlocal and electromagnetic 
effects, are analyzed. On the other hand, \eqref{fb} appears by minimizing the following functional
\begin{equation}\label{AC-energy}
J(v)=\int_{\Omega}\left(\frac{|\nabla v|^{p(x)}}{p(x)}+\lambda(x)\chi_{\{v>0\}}+f(x)v\right)dx
\end{equation}
studied in \cite{LW3}, as well as in the seminal paper by Alt and Caffarelli \cite{AC} in the case $p(x)\equiv 2$ and $f\equiv 0.$ We refer also to \cite{LW4}, where 
\eqref{fb} appears in the study of an optimal design problem.

We are interested in the regularity of both the solutions and the free boundaries of viscosity solutions of (\ref{fb}). This problem has been already faced in \cite{LW2} for weak solutions of (\ref{fb}), with the aid of the techniques developed  in \cite{AC}. 

In the present work we are following the strategy introduced in the important paper by De Silva \cite{D}, that was inspired by \cite{S}, for one-phase problems and linear non-divergence operators.  \cite{D} was further extended to two-phase problems in different settings see 
\cite{DFS1, DFS2, DFS3}. The same technique was applied to the $p$-Laplace operator ($p(x)\equiv p$ in \eqref{fb}) for the one phase case, with $p\ge 2$, in \cite{LR}. 

In the linear homogeneous case, $f\equiv 0,$ \eqref{fb} was studied for viscosity solutions in the pioneer works by
Caffarelli \cite{C1,C2}. 
The results in \cite{C1,C2} have been widely generalized to different
classes of homogeneous elliptic problems. See for example \cite{CFS, FS1,
FS2} for linear operators, \cite{AF,F1, F2, Fe1,W1, W2} for fully nonlinear
operators and \cite{LN1, LN2} for the $p$-Laplacian.

	We recall that problem \eqref{fb} was originally studied in the linear homogeneous case in \cite{AC}, associated to \eqref{AC-energy}. These techniques were generalized to the linear case with $f\not\equiv 0$ in \cite{GS, Le}. In the homogeneous case,   to a quasilinear uniformly elliptic situation \cite{ACF}, to the $p$-Laplacian \cite{DP}, to an Orlicz setting \cite{MW} and to the $p(x)$-Laplacian with $p(x)\ge 2$ \cite{FMW}. Finally,  \eqref{fb} with $1<p(x)<\infty$ and $f\not\equiv 0$ was dealt with in \cite{LW2}.

\medskip

In \cite{FL} we proved that  flat free boundaries of viscosity solutions to
\eqref{fb} are $C^{1,\alpha}.$  Here we first  prove that viscosity solutions are locally Lipschitz continuous, which is the optimal regularity for the problem. Then we prove that  Lipschitz free boundaries of  viscosity solutions are $C^{1,\alpha}.$

We devote this sequel to the study of these issues, which brought challenging difficulties due to the nonlinear behavior of the $p(x)$-Laplacian and, as a consequence, we present several novelties  that are  described in detail below.

\smallskip
 
 Our main results are  the following (for notation and the precise definition of viscosity solution to \eqref{fb} we refer to Section \ref{section2})
\begin{thm}[Optimal regularity]
\label{Lip-contin} Let
$u$ be a viscosity solution to \eqref{fb}
in $B_1$. 
There exists a  constant $C>0$ such that
\begin{equation*}  
\|\nabla u\|_{L^{\infty}(B_{1/2})}\leq C.
\end{equation*}
\end{thm}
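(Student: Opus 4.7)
The plan is to combine two classical ingredients: interior $C^{1,\alpha}$ regularity for the $p(x)$-Laplace equation with bounded right-hand side, and a linear growth estimate from the free boundary. For $x\in B_{1/2}\cap\Omega^+(u)$ let $d(x):=\mathrm{dist}(x,F(u))$. Known interior regularity for $\Delta_{p(x)}u=f$ (Fan; Acerbi--Mingione), applied in $B_{d(x)/2}(x)\subset\Omega^+(u)$, yields
\[
|\nabla u(x)|\le C\left(\frac{\sup_{B_{d(x)/2}(x)}u}{d(x)}+1\right),
\]
with $C$ depending on $\|f\|_{\infty}$, $\inf p$, $\sup p$ and $\|\nabla p\|_{\infty}$. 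Hence the theorem reduces to the linear growth bound $\sup_{B_r(y_0)}u\le Cr$ for every $y_0\in F(u)\cap B_{3/4}$ and small $r>0$; points of $B_{1/2}$ far from $F(u)$ are handled directly by the same interior estimate together with the boundedness of $u$ on compact subsets of $\Omega^+(u)$.

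I would argue the growth estimate by contradiction. If it fails, one extracts sequences $y_k\in F(u)\cap B_{3/4}$ and $x_k\in\Omega^+(u)$ with $|x_k-y_k|=d_k:=\mathrm{dist}(x_k,F(u))\to 0$ and $M_k:=u(x_k)/d_k\to\infty$. Let $z_k$ be the midpoint of $[x_k,y_k]$, so $B_{d_k/2}(z_k)\subset\Omega^+(u)$ is internally tangent to $F(u)$ at $y_k$. The Harnack inequality established in this paper, applied to $u$ on $B_{d_k/2}(z_k)$, then yields $u\ge c\,u(x_k)=cM_kd_k$ on $\partial B_{d_k/4}(z_k)$, for some universal $c>0$.

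In the annulus $A_k:=B_{d_k/2}(z_k)\setminus\overline{B_{d_k/4}(z_k)}$ I would build a radial barrier $w_k$ of the form used in \cite{LR} for the $p$-Laplacian, namely $w_k(x)=\alpha_k\bigl(\Phi(|x-z_k|)-\Phi(d_k/2)\bigr)$ with $\Phi$ a decreasing power determined by $p(y_k)$ and $\alpha_k$ chosen so that: $w_k$ is a strict subsolution of $\Delta_{p(x)}w=\|f\|_{\infty}$ in $A_k$; $w_k=0$ on $\partial B_{d_k/2}(z_k)$ and $w_k\le cM_kd_k$ on $\partial B_{d_k/4}(z_k)$; and $|\nabla w_k|\ge g(y_k)+\delta$ on $\partial B_{d_k/2}(z_k)$ for some $\delta>0$ independent of $k$, once $M_k$ is large enough. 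By the comparison principle for $\Delta_{p(x)}$, $w_k\le u$ in $A_k$; extending $w_k$ by zero outside $B_{d_k/2}(z_k)$ yields $w_k\le u$ globally, with $w_k(y_k)=u(y_k)=0$. Thus $w_k$ touches $u$ from below at $y_k\in F(u)$ with $|\nabla w_k(y_k)|>g(y_k)$, contradicting the viscosity supersolution condition at the free boundary.

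The principal obstacle is the construction and calibration of $w_k$. Because $\Delta_{p(x)}$ is non-autonomous, one cannot use exact radial $p$-Laplace solutions: the strategy is to freeze $p$ at $y_k$, prescribe $w_k$ through the frozen operator, and show that the perturbation $\Delta_{p(x)}w_k-\Delta_{p(y_k)}w_k$, controlled by $\|\nabla p\|_{\infty}d_k^{\alpha}$, is absorbed into a strictly positive gap $\Delta_{p(y_k)}w_k\ge\|f\|_{\infty}+\eta_0$. Balancing this freeze-and-perturb estimate with a gradient on the outer sphere that strictly exceeds $g(y_k)$ by a uniform amount, while keeping $w_k$ below $u$ on the inner sphere (here the divergence $M_k\to\infty$ from Harnack is crucial), is the delicate quantitative computation at the core of the proof, and where the Lipschitz regularity of $p$ and the continuity of $g$ enter in an essential way.
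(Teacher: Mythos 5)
Your high-level strategy coincides with the paper's: reduce, via the interior $C^{1,\alpha}$ estimate of Fan, the gradient bound to a linear-growth estimate $u\le C\,\mathrm{dist}(\cdot,F(u))$ near the free boundary, and prove the latter by contradiction with a Harnack-plus-radial-barrier construction that is forced to touch $u$ from below at a free boundary point, contradicting the condition $|\nabla\varphi|\le g$ in Definition~\ref{defnhsol1}. This is exactly the architecture of Proposition~\ref{lemma7-1} (Step I) together with Steps III--IV of the proof of Theorem~\ref{Lip-contin}. However, there are genuine gaps in the proposal, centered on the Harnack step and on the scaling structure that the paper uses to control it.

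First, the Harnack step is imprecise in two respects. For $\Delta_{p(x)}u=f$, Wolanski's Harnack inequality \eqref{harnack-p(x)} carries an \emph{additive} term of order $R(1+\mu)$ and, more delicately, a constant $C_H$ that depends on $\|u\|_{L^\infty}^{\,p_+-p_-}$, a $u$-dependent quantity that must be controlled a priori; the paper devotes \eqref{bound-norm-u-1}--\eqref{bound-norm-u-3} to showing precisely that this quantity stays universally bounded after the rescaling $\tilde u(x)=u(x_0+dx)/d$. Your proposal writes ``$u\ge c\,u(x_k)$ for some universal $c>0$'' as if the classical (multiplicative, $u$-independent) Harnack held, and attributes the estimate to ``the Harnack inequality established in this paper''; but Theorem~\ref{harnack-small-perturb} is used only for the \emph{lower} bound (nondegeneracy), while the upper bound rests on Wolanski's inequality, and the additive term is absorbed only after the fixed choice $r=\min\{1/8,1/(4C_H)\}$ and the assumption $\tilde u(0)>C_0\ge 1$. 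Second, a small geometric flaw: with $z_k$ the midpoint of $[x_k,y_k]$ one has $x_k\in\partial B_{d_k/2}(z_k)$, so Harnack applied in $B_{d_k/2}(z_k)$ does not directly yield a lower bound involving $u(x_k)$; the paper instead centers the rescaled ball at $x_0$ itself.

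Third, and most substantively, the paper does not work with $u$ directly on the small balls. It first proves the linear-growth estimate (Proposition~\ref{lemma7-1}) on the normalized ball $B_2$ under the smallness hypotheses \eqref{cindinfy} on $\|f\|_\infty$, $\|\nabla p\|_\infty$, $\|p-p_0\|_\infty$, $\|g-g_0\|_\infty$, and then obtains Theorem~\ref{Lip-contin} by rescaling $u$ around each $z_0\in F(u)$ by a small factor $\rho$ so that these hypotheses are met. This ensures that the barrier Lemma~\ref{barry} (which requires $\|\nabla p\|_\infty\le\ep^{1+\theta}$) applies cleanly to a barrier with a \emph{universal} amplitude, rather than one with a diverging amplitude $\alpha_k\sim M_k$ as in your ansatz. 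Your freeze-and-perturb calibration with fixed $\|\nabla p\|_\infty$ and $\alpha_k\to\infty$ is precisely the ``delicate quantitative computation'' you defer, and the term $\langle\nabla p,\nabla w_k\rangle\log|\nabla w_k|$ is not obviously absorbed when $|\nabla w_k|$ is large; the paper's two-step rescaling structure is exactly what removes this difficulty.
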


\begin{thm}[Lipschitz implies $C^{1,\alpha}$]
\label{Lipmain} Let $u$ be a viscosity solution to \eqref{fb} in $B_1$, with
$0\in F(u)$. 
 If $F(u)$ is a Lipschitz graph in a neighborhood of $0$, then 
$F(u)$ is $C^{1,\alpha}$ in a (smaller) neighborhood of $0$.
\end{thm}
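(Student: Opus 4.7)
The plan is to follow Caffarelli's classical cone-of-monotonicity scheme, adapted to the $p(x)$-Laplacian setting via the Harnack inequality developed earlier in the paper and the flatness implies $C^{1,\alpha}$ result of \cite{FL}. First I would translate the Lipschitz graph assumption into quantitative pointwise information about $u$. Normalize so that in $B_1$ the free boundary is the graph of a Lipschitz function $\varphi$ in direction $\nu$, with $\varphi(0)=0$ and Lipschitz constant $L$, and $\Omega^+(u)$ lies on the positive side. Theorem \ref{Lip-contin} gives the upper bound $u(x)\le C\,\mathrm{dist}(x,F(u))$, while the boundary condition $|\nabla u|=g>0$ together with a standard barrier argument in the positivity set yields the matching non-degeneracy bound from below. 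Hence $u$ grows linearly away from $F(u)$ in the $\nu$-direction, with slope close to $g(0)$.

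Second, I would upgrade the Lipschitz graph property to a cone of monotonicity. By elementary geometry of a Lipschitz graph, for every unit vector $\tau$ in a cone $\Gamma(\theta_0,\nu)$ with $\tan\theta_0\sim 1/L$, and every small $t>0$, the positivity set of $u(\cdot + t\tau)$ contains that of $u(\cdot)$. Using the divergence form of $\Delta_{p(x)}$ and the fact that $u(\cdot+t\tau)$ is still a (shifted) viscosity solution, a viscosity comparison in the common positivity set gives $u(x+t\tau)\ge u(x)$, so $u$ is monotone non-decreasing along every $\tau\in\Gamma(\theta_0,\nu)$. The error coming from the non-invariance of the equation (variation of $p(\cdot)$ and $f(\cdot)$ under translation) is treated as a small perturbation to the comparison, absorbed by choosing the scale small enough.

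Third, and most importantly, I would iteratively enlarge the cone. For $\tau$ on the boundary of $\Gamma(\theta_0,\nu)$ and $t$ small, set $w_t(x)=u(x+t\tau)-u(x)\ge 0$. Viewing $u$ and $u(\cdot+t\tau)$ as two solutions of nearby $p(x)$-Laplace equations with bounded right-hand side, $w_t$ satisfies a linearized equation with bounded measurable coefficients plus controllable lower-order terms. The Harnack inequality established in the paper then applies; combined with a barrier that forces $w_t(x_0)\ge c\,t$ at a well-chosen interior point $x_0$ (which exists because $\tau$ lies strictly inside $\Gamma(\theta_0,\nu)$ and $u$ has linear growth), one concludes $w_t\ge c\,t$ on a definite subdomain. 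Translating this back into geometric language produces a strict enlargement $\theta_1\ge\theta_0+\delta$, with a new axis $\nu_1$, valid at a smaller scale $r_1<1$, for some universal $\delta>0$.

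Iterating this step a finite number of times, the opening $\theta_k$ exceeds $\pi/2-\varepsilon_0$, where $\varepsilon_0$ is the flatness threshold from \cite{FL}; at that scale $F(u)$ is $\varepsilon_0$-flat, and \cite{FL} gives the desired $C^{1,\alpha}$ regularity. The main obstacle I expect is the Harnack/linearization step: because $\Delta_{p(x)}$ is nonlinear and not translation-invariant, one cannot simply differentiate $u$ in direction $\tau$ and obtain a linear equation for $u_\tau$. Instead, $w_t$ must be handled as a difference of two solutions to slightly different equations, and the Harnack inequality must be applied in a form robust enough to absorb the terms produced by the variation of $p$, the right-hand side $f$, and the degeneracy/singularity of the operator where $\nabla u$ is small. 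Verifying that the enlargement $\delta$ is universal (and in particular independent of the iteration step) under these perturbations is the delicate technical point.
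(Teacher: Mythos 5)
Your proposal follows Caffarelli's cone-of-monotonicity scheme directly in the inhomogeneous $p(x)$-Laplacian setting. The paper instead takes a compactness route: blow up at $0$, show the blow-up limit $u_0$ is a global Lipschitz viscosity solution of the homogeneous constant-exponent $p_0$-Laplacian one-phase problem (in the sense of Definition \ref{def-LN}), invoke the known Lipschitz-implies-$C^{1,\alpha}$ result of \cite{LN1} for that problem together with the Liouville lemma (Lemma \ref{lemma7.3}) to conclude $u_0(x)=x_n^+$, and then read off $\bar{\varepsilon}$-flatness of $u$ at a small scale, which feeds into the flat-implies-$C^{1,\alpha}$ theorem of \cite{FL}. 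In passing, note that the bound $u(x)\le C\,\mathrm{dist}(x,F(u))$ is actually the content of \eqref{bound-lip} in Proposition \ref{lemma7-1}; Theorem \ref{Lip-contin} itself states the gradient bound.

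The genuine gap in your proposal is the cone-enlargement step. You assert that $w_t=u(\cdot+t\tau)-u(\cdot)$ "satisfies a linearized equation with bounded measurable coefficients plus controllable lower-order terms" and that "the Harnack inequality established in the paper then applies." Neither claim is justified. Theorem \ref{harnack-small-perturb} is a Harnack inequality for a \emph{nonnegative solution} of $\Delta_{p(x)}v=f$, not for the \emph{difference} of two such solutions; the two are not interchangeable here. The linearization of $\Delta_{p(x)}$ along a family of solutions has ellipticity ratio controlled by $|\nabla u|^{p(x)-2}$, which degenerates (for $p(x)>2$) or blows up (for $p(x)<2$) precisely where $\nabla u$ vanishes, and those points are not under a priori control near the free boundary. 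Obtaining a Harnack-type gain for differences of $p$-harmonic functions with Lipschitz free boundaries is exactly the hard core of Lewis--Nyström's proof for constant $p$, and there is no reason to expect it to follow from the tools in this paper by a small-perturbation argument; the nonuniform degeneracy persists at every scale. The paper's blow-up strategy is designed precisely to avoid having to re-derive that step in the variable-exponent, inhomogeneous setting: the blow-up limit lives in the constant-exponent homogeneous world where \cite{LN1} can be quoted, and the only new ingredients are the one-sided asymptotic expansions (Lemma \ref{lemma51}) and Propositions \ref{casealphageq1} and \ref{alpha-leq-1}, which pin down the slope of $u_0$ at regular boundary points so that Definition \ref{def-LN} is met with $\alpha=1$.
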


\medskip

In addition to the assumptions already stated above, we suppose that
there exist positive numbers  $p_{\min},p_{\max}$ and $\gamma_0$ such that $1<p_{\min}\leq p(x)\leq p_{\max}<\infty$ and $g(x)\ge\gamma_0>0.$ 

In Theorem \ref{Lip-contin} the constant $C$ depends only on  $p_{\min}$, $p_{\max}$,  $\|\nabla p\|_{L^{\infty}(B_{3/4})}$, $ \|f\|_{L^{\infty}(B_{3/4})}$, $\|g\|_{C^{0,\beta}(\overline{B_{3/4}})}$, $\beta$, $ \|u\|_{L^{\infty}(B_{3/4})}$ and $n$ (the dimension of the space).

In Theorem \ref{Lipmain} the constant $\alpha$ depends only  on $p_{\min}$, $p_{\max}$,  $\beta$,  $\|g\|_{L^{\infty}({B_\rho})}$, $\gamma_0$ and $n$, where $\rho$ is the radius of the ball $B_\rho$ where $F(u)$ is Lipschitz. Moreover, the size of the neighborhood where $F(u)$ is $C^{1,\alpha}$ depends only on $\rho$,   
 $p_{\min}$, $p_{\max},$  $\|\nabla p\|_{L^{\infty}(B_\rho)}$, $ \|f\|_{L^{\infty}(B_\rho)}$, $\|g\|_{C^{0,\beta}(\overline{B_\rho})}$, $\gamma_0$, $\beta$, $ \|u\|_{L^{\infty}(B_{3\rho/4})}$, $n$ and the Lipschitz constant of $F(u)$.

After we develop the necessary tools, Theorem \ref{Lipmain} follows from Theorem 1.1  in \cite{FL}  ---where we proved that flat free boundaries are $C^{1,\alpha}$--- and from the main
result in  \cite{LN1}, via a blow-up argument.

As already mentioned, problem \eqref{fb} was faced in \cite{LW2} for weak solutions with the techniques developed in \cite{AC}. We want to emphasize at this point that the approach in \cite{AC} for weak solutions gives that {\it flat} free boundaries are $C^{1,\alpha}$. Alt - Caffarelli's approach does not include the result {\it Lipschitz} free boundaries are $C^{1,\alpha}$. One of the consequences of our Theorem \ref{Lipmain} is an analogous
result for weak solutions of \eqref{fb} (see Corollary \ref{Lip-lw2}).

Among the novelties that our work presents, we also refer to Section \ref{asymptoticI} where we prove some auxiliary results that are crucial in the proof of our main theorem. In that section we revisit some  lemmas  that are well known in the linear setting (see  \cite{CS} and the Appendix in \cite{C2}), for the case of $p_0$-harmonic functions  (i.e., $\Delta_{p_0} u=0$, $p_0\in (1,\infty)$). Our results concern the existence of  first order expansions at one side regular boundary points of positive Lipschitz $p_0$-harmonic functions, vanishing at the boundary of a domain. This part required great effort and passed through the equivalence of the notions of weak and viscosity solution in the case of the $p_0$-Laplace operator. Moreover, our proof can be applied to a general class of fully nonlinear degenerate elliptic operators (see Remark \ref{extens-asympt}). We strongly believe that these results are of independent interest.

We remark that, as was the case in \cite{FL}, carrying out, for the inhomogeneous $p(x)$-Laplace operator, the strategy devised in \cite{D}   required that we develop new tools. In fact, the $p(x)$-Laplacian  is a nonlinear operator that appears naturally in divergence form from minimization problems, i.e., in the form ${\rm div}A(x,\nabla u)=f(x)$, with
\begin{equation*}\label{Fan-1.6}
\lambda |\eta|^{p(x)-2}|\xi|^2\le\sum_{i,j=1}^n\frac{\partial A_i}{\partial \eta_j}(x,\eta)\xi_i\xi_j\le \Lambda |\eta|^{p(x)-2}|\xi|^2, \quad \xi\in\R^n,
\end{equation*}
where $0<\lambda\le\Lambda$. This operator is singular in the regions where $1<p(x)<2$  and  degenerate in the ones where $p(x)>2$.

Let us stress that  the main arguments in the  approach introduced in \cite{D} are based on  Harnack inequality. However, Harnack inequality for the $p(x)$-Laplacian has a different form from the standard one ---still valid for the $p_0$-Laplace operator--- even in the homogeneous case. Namely, Harnack inequality for the inhomogeneous equation $\Delta_{p(x)}u=f$, with $f$ bounded,  states that, for any nonnegative weak solution $u$ in $B_{4R},$ there exist  constants $C>0$ and $\mu\ge 0$ such that 
\begin{equation}\label{harnack-p(x)}
\sup_{B_{R}}u\leq C(\inf_{B_{R}}{u}+ R+\mu R),
\end{equation}
 where  $C$ depends on $u$, and $\mu$ depends on $||f||_{L^{\infty}(B_{4R})}$ (among other dependencies). We refer to \cite{Wo} for the the proof and further details on Harnack inequality for the inhomogeneous $p(x)$-Laplacian.

The presence of the extra term appearing in the right hand side of \eqref{harnack-p(x)} ---{\it even present  when $f\equiv 0$}, in which case $\mu=0$--- brought a major difficulty in the application of the strategy of \cite{D} for problem \eqref{fb}, under a small perturbation assumption. In order to successfully apply that strategy, we proved a new Harnack inequality for the inhomogeneous $p(x)$-Laplacian (Theorem \ref{harnack-small-perturb}) that is appropriate for small perturbation settings. Our result roughly says that if $||f||_{L^{\infty}}$ is small and $p(x)$ is close to a constant $p_0$, then the constant terms appearing in the right hand side of \eqref{harnack-p(x)} can be taken small.

This constitutes a key result in our proof of the nondegeneracy of viscosity solutions of \eqref{fb} with Lipschitz free boundaries, and it eventually leads to our main Theorem  \ref{Lipmain}. Let us emphasize that, in light of the discussion above on inequality \eqref{harnack-p(x)}, our Harnack inequality  for small perturbation settings is  indeed  of independent interest.

Another important matter,  not present in other free boundary problems treated with the present approach, is the a priori control on the dependence on $u$ in the constant $C$ appearing in Harnack inequality \eqref{harnack-p(x)}. This control is required in order to perform iteration and blow-up arguments. This same fact made the proof of Theorem \ref{Lip-contin} much more delicate.

As already mentioned, in Theorem \ref{Lipmain} we make use of the main result in \cite{LN1}. It is worth noticing that the application
of this result to our problem required nontrivial arguments  due to the different notion of solution employed in \cite{LN1} (see Secton 6, Theorem \ref{Lipmain} and Propositions \ref{casealphageq1} and 
\ref{alpha-leq-1}).

Let us remark that, as a by-product  of our theorems on the regularity of $F(u)$, we get in Corollary \ref{higher-reg-F(u)} further regularity results for $F(u)$,
under additional regularity assumptions on the data $p, f$ and $g$.

We also discuss some applications of Theorem 1.1 in \cite{FL} and  Theorem \ref{Lipmain} in the present paper (see Section
\ref{applic} and, in particular, Remark \ref{rem-concl-applic}).

Let us mention as well that our results in Sections \ref{conseq} and \ref{applic} are new even for $p(x)\equiv p_0$, with $p_0$ a constant.

We finally point out that the $p(x)$-Laplacian  is a particular case of operator with nonstandard growth. Partial differential equations with nonstandard growth have been receiving a lot of attention due to their wide range of applications. Among them we mention the modeling of non-Newtonian fluids,  for instance, electrorheological \cite{R} or thermorheological fluids \cite{AR}. Other applications  include non-linear elasticity 	\cite{Z1},  image reconstruction \cite{AMS,CLR} and the modeling of electric conductors \cite{Z2}, to cite a few.

Our work is organized as follows. In Section \ref{section2} we provide notation and basic definitions. We also recall the relationship between the different notions of solutions to $\Delta_{p(x)}u=f$ we are using.  In Section \ref{section4} we obtain a Harnack inequality for the inhomogeneous equation $\Delta_{p(x)}u=f$ (Theorem \ref{harnack-small-perturb}) that is appropriate for  small perturbation settings.
 Next, in Section \ref{section6} we prove the local Lipschitz continuity of viscosity solutions of \eqref{fb}, Theorem \ref{Lip-contin}. We then show the nondegeneracy of these solutions under the additional assumption that $F(u)$ is a Lipschitz graph. 
In Section \ref{asymptoticI} we obtain a result on asymptotic developments of positive Lipschitz $p_0$-harmonic functions at one side regular boundary points that we  use in Theorem \ref{Lipmain} and in Section \ref{conseq}. 
Then, in Section \ref{section7} we prove our main result, Theorem \ref{Lipmain}. In Section \ref{conseq} we discuss some consequences, and finally, in Section \ref{applic}, we present some applications of our results. For the sake of completeness, in Appendix \ref{appA1} we introduce the Sobolev spaces with variable exponent, which are the appropriate spaces to work with weak solutions of the $p(x)$-Laplacian. We conclude the paper with Appendix \ref{app-liouv}, where we include a Liouville type result that we use in our main theorem.

\begin{subsection}{Assumptions}\label{assump}

\smallskip

Throughout the paper we let $\Omega\subset\R^n$  be a bounded domain.

\bigskip

\noindent{\bf Assumptions on $p(x)$.} We
assume that the function $p(x)$ verifies
\begin{equation*}
p\in C^1(\Omega),\qquad 1<p_{\min}\le p(x)\le p_{\max}<\infty,\qquad  \nabla p \in L^{\infty}(\Omega),
\end{equation*}
for some positive constants $p_{\min}$ and $p_{\max}$.

\bigskip

\noindent{\bf Assumptions on $f$.} We  assume that function $f$ verifies
\begin{equation*}
f\in C(\Omega)\cap L^{\infty}(\Omega).
\end{equation*}
                                                                                               
\bigskip

\noindent{\bf Assumptions on $g$.} We assume that the
function $g$ verifies
\begin{equation*}
g\in C^{0, \beta}(\Omega)\cap L^{\infty}(\Omega),\qquad g(x)\ge\gamma_0>0,
\end{equation*}
for some positive constants $0<\beta<1$ and $\gamma_0$.

\end{subsection}

\medskip

\section{Basic definitions, notation and preliminaries}\label{section2}

In this section, we provide notation, basic definitions and some preliminaries that will be relevant for  our work.

\medskip

\noindent {\bf Notation.} For any continuous function $u:\Omega\subset \mathbb{R}^n\to \mathbb{R}$ we denote
\begin{equation}\label{def-fb}
\Omega^+(u):= \{x \in \Omega : u(x)>0\},\qquad F(u):= \partial \Omega^+(u) \cap \Omega. 
\end{equation} 
We refer to the set $F(u)$ as the {\it free boundary} of $u$, while $\Omega^+(u)$ is its {\it positive phase} (or {\it side}).

Throughout the paper, when we say that {\it $F(u)$ is Lipschitz} we are assuming that 
\begin{equation*}\label{def-fb-lip}
\Omega^+(u)= \{x=(x',x_n) \in \Omega: x_n>\psi(x')\},
\end{equation*} 
in an appropriate coordinate system, with $\psi$ Lipschitz on $\mathbb{R}^{n-1}$.

\bigskip

We begin with some remarks on the $p(x)$-Laplacian. In particular, we  recall the relationship between the different notions of solutions to $\Delta_{p(x)}u=f$ we are using, namely, weak and viscosity solutions. Then we give the definition of viscosity solution to problem (\ref{fb}) and we deduce some consequences. We here refer to the usual $C$-viscosity definition of sub/supersolution and solution of an elliptic PDE, see e.g., \cite{CIL}.

We start by observing that 
direct calculations show that, for $C^2$ functions $u$ such that $\nabla u(x)\not=0$,
\begin{equation*}
\begin{split}
&\Delta_{p(x)}u=\mbox{div} (|\nabla u|^{p(x)-2}\nabla u)\\
&=|\nabla u(x)|^{p(x)-2}\left(\Delta u+(p(x)-2)\Delta_\infty^Nu+\langle \nabla p(x),\nabla u(x)\rangle\log |\nabla u(x)| \right),
\end{split}
\end{equation*}
where
$$
\Delta_\infty^Nu:=\Big\langle D^2 u(x)\frac{\nabla u(x)}{|\nabla u(x)|}\,,\,\frac{\nabla u(x)}{|\nabla u(x)|}\Big\rangle
$$
denotes the normalized $\infty$-Laplace operator.

We also deduce that
\begin{equation}\label{p(x)-vs-pucci}
\begin{split}
&|\nabla u(x)|^{p(x)-2}\left(\mathcal{M}_{\lambda_0,\Lambda_0}^-(D^2u(x))+\langle \nabla p(x),\nabla u(x)\rangle\log |\nabla u(x)| \right)\\
&\leq \Delta_{p(x)}u\leq |\nabla u(x)|^{p(x)-2}\left(\mathcal{M}_{\lambda_0,\Lambda_0}^+(D^2u(x))+\langle \nabla p(x),\nabla u(x)\rangle\log |\nabla u(x)| \right),
\end{split}
\end{equation}
where, $\lambda_0:=\min\{1,p_{\min}-1\}$ and $\Lambda_0:=\max\{1,p_{\max}-1\}.$  
As usual, if $0<\lambda\leq \Lambda$ are numbers, and $e_i$ is the $i-$th  eigenvalue of the $n\times n$ symmetric matrix $M,$ then $\mathcal{M}_{\lambda,\Lambda}^+$ and $\mathcal{M}_{\lambda,\Lambda}^-$ denote the extremal Pucci operators and are defined  (see \cite{CC}) as
\begin{equation}\label{def-pucci}
\begin{split}
\mathcal{M}_{\lambda,\Lambda}^+(M)&=\lambda\sum_{e_i<0}e_i+\Lambda\sum_{e_i>0}e_i,\\
\mathcal{M}_{\lambda,\Lambda}^-(M)&=\Lambda\sum_{e_i<0}e_i+\lambda\sum_{e_i>0}e_i.
\end{split}
\end{equation}

First we need (see  Appendix \ref{appA1} for the definition of Sobolev spaces with variable exponent)
\begin{defn}\label{defnweak} 
Assume that $1<p_{\min}\le p(x)\le p_{\max}<\infty$
with  $p(x)$ Lipschitz continuous in $\Omega$  and  $f\in L^{\infty}(\Omega)$.

We say that $u$
is a weak solution to $\Delta_{p(x)}u=f$ in $\Omega$ if $u\in W^{1,p(\cdot)}(\Omega)$ and,  for every  $\varphi \in
C_0^{\infty}(\Omega)$, there holds that
$$
-\int_{\Omega} |\nabla u(x)|^{p(x)-2}\nabla u \cdot \nabla
\varphi\, dx =\int_{\Omega} \varphi\, f(x)\, dx.
$$
\end{defn}

We recall the following result we proved in \cite{FL} (see \cite{FL}, Theorem 3.2)
\begin{thm} \label{weak-is-visc} Let $p$ and $f$ be as in Definition \ref{defnweak}. Assume moreover that $f\in C(\Omega)$ and $p\in C^1(\Omega)$. 

 Let $u\in W^{1,p(\cdot)}(\Omega)\cap C(\Omega)$ be a weak solution to $\Delta_{p(x)}u=f$ in $\Omega$. Then $u$ is a viscosity solution to $\Delta_{p(x)}u=f$ in $\Omega$.
\end{thm}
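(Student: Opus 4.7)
The plan is to argue by contradiction using the weak comparison principle. Suppose $u$ fails to be a viscosity subsolution at some $x_0\in\Omega$ (the supersolution direction is symmetric). Then there is a test function $\varphi\in C^2$ touching $u$ strictly from above at $x_0$ and violating the viscosity inequality, which, using the upper semicontinuous envelope of the operator
\[
F(x,\eta,M)=-|\eta|^{p(x)-2}\bigl(\mathrm{tr}\,M+(p(x)-2)\langle M\eta/|\eta|,\eta/|\eta|\rangle+\langle\nabla p(x),\eta\rangle\log|\eta|\bigr),
\]
means roughly that the classical $p(x)$-Laplacian of $\varphi$ exceeds $f(x_0)$ in a limiting sense. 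Throughout, I would first replace $\varphi$ by $\varphi+|x-x_0|^4$ to make the touching strict, so that $\varphi-u$ attains a strict local minimum at $x_0$.

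Consider first the regular case $\nabla\varphi(x_0)\neq 0$. Then $F$ is continuous near $(x_0,\nabla\varphi(x_0),D^2\varphi(x_0))$ and the viscosity violation becomes a classical pointwise inequality $\Delta_{p(x_0)}\varphi(x_0)>f(x_0)$. By continuity of $p,\nabla p,f$ and of $\varphi$ and its derivatives, there exists a ball $B_r(x_0)\Subset\Omega$ on which $\nabla\varphi\neq 0$ and $\Delta_{p(x)}\varphi>f$ holds classically, so $\varphi$ is a strict weak supersolution in $B_r(x_0)$. Choose $c>0$ so small that $\varphi-c>u$ still holds at $x_0$ while $\varphi-c<u$ on $\partial B_r(x_0)$; then $w:=(u-(\varphi-c))_+\in W^{1,p(\cdot)}_0(B_r(x_0))$ is nontrivial and nonnegative. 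Testing the weak formulation of $\Delta_{p(x)}u=f$ against $w$ and subtracting the analogous integral identity arising from $\Delta_{p(x)}\varphi\ge f+\eta$ yields
\[
\int_{\{u>\varphi-c\}\cap B_r(x_0)}\bigl(|\nabla u|^{p(x)-2}\nabla u-|\nabla\varphi|^{p(x)-2}\nabla\varphi\bigr)\cdot\nabla(u-\varphi)\,dx \le -\eta\int_{\{u>\varphi-c\}}w\,dx<0,
\]
which contradicts the strict monotonicity of $\xi\mapsto|\xi|^{p(x)-2}\xi$.

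Next consider the critical case $\nabla\varphi(x_0)=0$. If $p(x_0)\ge 2$, the factor $|\eta|^{p(x)-2}$ makes $F^*(x_0,0,M)=0$ for every symmetric $M$ (the log term is killed by $|\eta|^{p(x)-2}$ as $\eta\to 0$ since $p(x_0)>2$, or disappears by hypothesis when $p(x_0)=2$), so the viscosity subsolution inequality $F^*\le -f(x_0)$ would force $0\le -f(x_0)$ at a point where one needs a \emph{strict} violation, producing no actual obstruction in this regime. If $1<p(x_0)<2$, the envelope again collapses thanks to the singular prefactor and a careful analysis of $|\eta|^{p(x)-2}\langle\nabla p,\eta\rangle\log|\eta|\to 0$, so there is effectively nothing to verify at critical points. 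Where a genuine check is still needed, I would perturb $\varphi$ by adding a small linear term $\delta\langle e,x-x_0\rangle$ and track the nearby minimum point $x_\delta$ of $\varphi_\delta-u$; at $x_\delta$ the gradient $\nabla\varphi_\delta(x_\delta)$ is nonzero for $\delta>0$, and upper semicontinuity of $F$ together with the envelope inequality at $(x_0,0,D^2\varphi(x_0))$ transfers to a classical inequality at $x_\delta$, bringing us back to the previous case.

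The main obstacle is the second step: passing from the envelope condition at a critical point of the test function to a usable classical differential inequality at nearby noncritical points. The difficulty is concentrated in the singular logarithmic term $\langle\nabla p,\eta\rangle\log|\eta|$, whose sign and size near $\eta=0$ must be controlled simultaneously with the singular/degenerate prefactor $|\eta|^{p(x)-2}$. The representation \eqref{p(x)-vs-pucci} in terms of Pucci operators is precisely the tool that disentangles these two effects and lets one reduce matters to standard monotonicity and envelope computations, which is why that formula is set up in the preceding paragraphs.
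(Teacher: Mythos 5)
Your strategy in the regular case $\nabla\varphi(x_0)\neq 0$---make the touching strict, upgrade the pointwise violation to a classical strict differential inequality in a small ball, slide the test function, test the weak formulation against the nonnegative compactly supported function $(u-(\varphi-c))_+$, and invoke strict monotonicity of $\xi\mapsto|\xi|^{p(x)-2}\xi$---is the standard weak-to-viscosity argument (as in \cite{JLP,JLM,JJ}, and in the cited proof in \cite{FL}), and the contradiction you display at the end is the correct one. However, the prose surrounding it has the signs systematically reversed. If $u$ fails to be a viscosity subsolution and $\varphi$ touches from above at $x_0$ with $\nabla\varphi(x_0)\neq 0$, the inequality being violated is $\Delta_{p(x_0)}\varphi(x_0)\ge f(x_0)$, so the violation is $\Delta_{p(x_0)}\varphi(x_0)<f(x_0)$, not ``exceeds $f(x_0)$''; consequently $\varphi$ is a strict classical \emph{supersolution} with $\Delta_{p(x)}\varphi\le f-\eta$ near $x_0$, not $\Delta_{p(x)}\varphi\ge f+\eta$; and $c>0$ must be chosen so small that $\varphi-c<u$ at $x_0$ (automatic, since $\varphi(x_0)=u(x_0)$) while $\varphi-c>u$ near $\partial B_r(x_0)$ (using the strict touching)---you stated both the other way around. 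None of this affects the mechanism, and your displayed inequality is in fact written with the right signs, but as a piece of prose the argument contradicts itself and would not survive a careful reading.

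Separately, the lengthy treatment of the critical case $\nabla\varphi(x_0)=0$ is not needed. As Theorem~\ref{defnhsol2} makes explicit, and consistent with the definitions adopted in \cite{JLP,JLM,JJ}, the viscosity notion used here imposes no condition at test functions whose gradient vanishes at the touching point. The pathologies you point out---the prefactor $|\eta|^{p(x_0)-2}$ diverges as $\eta\to 0$ when $1<p(x_0)<2$ and collapses to zero when $p(x_0)>2$, so the envelope inequality is either unverifiable or unsatisfiable whenever $f(x_0)\neq 0$---are precisely the reason such test functions are excluded from the definition, not an additional case to be analysed. Once this is understood, the whole proof reduces to the regular case, and the linear-perturbation patching you sketch at the end can simply be dropped.
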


\begin{rem} \label{equiv-not} We point out that the equivalence between weak and viscosity solutions to the $p(x)$-Laplacian with right hand side $f\equiv 0$ was proved in \cite{JLP}. On the other hand, this equivalence, in case $p(x)\equiv p$ and $f\not\equiv 0$ was dealt with in \cite{JJ} and \cite{MO}. See also \cite{JLM} for the case $p(x)\equiv p$ and $f\equiv 0$.
\end{rem}

We need the following standard notion.

\begin{defn}Given $u, \varphi \in C(\Omega)$, we say that $\varphi$
touches $u$ from below (resp. above) at $x_0 \in \Omega$ if $u(x_0)=
\varphi(x_0),$ and
$$u(x) \geq \varphi(x) \quad (\text{resp. $u(x) \leq
\varphi(x)$}) \quad \text{in a neighborhood $O$ of $x_0$.}$$ If
this inequality is strict in $O \setminus \{x_0\}$, we say that
$\varphi$ touches $u$ strictly from below (resp. above).
\end{defn}

\begin{defn}\label{defnhsol1} Let $u$ be a continuous nonnegative function in
$\Omega$. We say that $u$ is a viscosity solution to (\ref{fb}) in
$\Omega$, if the following conditions are satisfied:
\begin{enumerate}
\item $ \Delta_{p(x)} u = f$ in $\Omega^+(u)$ 
in the weak sense of Definition \ref{defnweak}.
\item For every $\varphi \in C(\Omega)$, $\varphi \in C^2(\overline{\Omega^+(\varphi)})$. If $\varphi^+$ touches $u$ from below (resp.  above) at $x_0 \in F(u)$ and $\nabla \varphi(x_0)\not=0$, then 
$$|\nabla \varphi(x_0)| \leq g(x_0)\quad (\text{resp. $ \geq g(x_0)$)}.$$
\end{enumerate}
\end{defn}

\smallskip

Next theorem follows as a consequence of our Theorem \ref{weak-is-visc}. 

\begin{thm}\label{defnhsol2} Let $u$ be a viscosity solution to (\ref{fb}) in
$\Omega.$  Then the following conditions are satisfied:
\begin{enumerate}
\item $ \Delta_{p(x)} u = f$ in $\Omega^+(u)$ in the
viscosity sense, 
that is:
\begin{itemize}
\item[(ia)] for every $\varphi\in C^2(\Omega^+(u))$ and for every $x_0\in \Omega^+(u),$ if $\varphi$ touches $u$ from above at $x_0$ and $\nabla \varphi(x_0)\not=0,$ then $\Delta_{p(x_0)}\varphi(x_0)\geq f(x_0),$ that is, $u$ is a viscosity subsolution;
\item[(ib)] for every $\varphi\in C^2(\Omega^+(u))$ and for every $x_0\in \Omega^+(u),$ if $\varphi$ touches $u$ from below at $x_0$ and $\nabla \varphi(x_0)\not=0,$ then $\Delta_{p(x_0)}\varphi(x_0)\leq f(x_0),$ that is, $u$ is a viscosity supersolution.
\end{itemize}
\item For every $\varphi \in C(\Omega)$, $\varphi \in C^2(\overline{\Omega^+(\varphi)})$. If $\varphi^+$ touches $u$ from below (resp.  above) at $x_0 \in F(u)$ and $\nabla \varphi(x_0)\not=0$, then 
$$|\nabla \varphi(x_0)| \leq g(x_0)\quad (\text{resp. $ \geq g(x_0)$)}.$$
\end{enumerate}
\end{thm}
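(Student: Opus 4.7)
The plan is to observe that this statement is essentially a direct corollary of Theorem \ref{weak-is-visc} applied on the open set $\Omega^+(u)$, together with the fact that condition (2) is already part of the hypothesis.

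First I would dispose of condition (2): it is verbatim condition (2) in Definition \ref{defnhsol1}, so it is contained in the hypothesis that $u$ is a viscosity solution to \eqref{fb}; nothing is to be proved.

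For condition (1) I would proceed as follows. By hypothesis (part (1) of Definition \ref{defnhsol1}), $u$ is a weak solution of $\Delta_{p(x)}u=f$ in $\Omega^+(u)$ in the sense of Definition \ref{defnweak}; in particular $u\in W^{1,p(\cdot)}(\Omega^+(u))$. Moreover, $u$ is continuous on $\Omega$ by the standing assumption that it is a continuous nonnegative function on $\Omega$, so in particular $u\in C(\Omega^+(u))$. The set $\Omega^+(u)$ is open since $u$ is continuous, and the assumptions on $p$ and $f$ made in Subsection \ref{assump} and in the statement of Theorem \ref{weak-is-visc} (namely $p\in C^1(\Omega)$ with $1<p_{\min}\le p(x)\le p_{\max}<\infty$, and $f\in C(\Omega)\cap L^\infty(\Omega)$) are inherited on the subdomain $\Omega^+(u)$. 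I may therefore apply Theorem \ref{weak-is-visc} with $\Omega$ replaced by $\Omega^+(u)$ to conclude that $u$ is a viscosity solution to $\Delta_{p(x)}u=f$ in $\Omega^+(u)$. Unwinding the standard $C$-viscosity definition, this is precisely the conjunction of (ia) and (ib).

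Since every hypothesis of Theorem \ref{weak-is-visc} passes unchanged from $\Omega$ to the open subset $\Omega^+(u)$, there is no real obstacle; the only point requiring a brief verification is openness of $\Omega^+(u)$ (immediate from continuity of $u$) and the transfer of the $C^1$ and $L^\infty$ assumptions on $p$ and $f$ to the subdomain, both of which are automatic. Hence (ia), (ib) and (2) all hold, which completes the proof.
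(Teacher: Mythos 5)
Your proof is correct and follows exactly the route the paper intends: the paper disposes of this theorem with the single remark ``Next theorem follows as a consequence of our Theorem~\ref{weak-is-visc},'' and your argument is precisely the straightforward expansion of that remark — condition (2) is verbatim part of Definition~\ref{defnhsol1}, and condition (1) follows by applying Theorem~\ref{weak-is-visc} on the open subdomain $\Omega^+(u)$, where all hypotheses (continuity and Sobolev membership of $u$, regularity of $p$ and $f$) transfer automatically.
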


\smallskip

\begin{rem} If  $p(x)\equiv p$ or $f\equiv 0$, then any function satisfying the conditions of Theorem \ref{defnhsol2} is a solution to \eqref{fb} in the sense of Definition \ref{defnhsol1} (see Remark \ref{equiv-not}).
\end{rem}

We introduce also the notion of comparison sub/supersolution.

\begin{defn}\label{defsub}
We say that $v \in C(\Omega)$ is a strict (comparison) subsolution (resp.
supersolution) to (\ref{fb}) in $\Omega$ if $v \in C^2(\overline{\Omega^+(v) })$,  $\nabla v\not=0$ in $\overline{\Omega^+(v) }$ and the
following conditions are satisfied:
\begin{enumerate}
\item $ \Delta_{p(x)} v  > f $ (resp. $< f $) in $\Omega^+(v)$; 
\item If $x_0 \in F(v)$, then $$|\nabla v(x_0)|>g(x_0)\quad (\text{resp. $|\nabla v(x_0)| <g(x_0)$}). $$
\end{enumerate}
\end{defn}

\smallskip

Notice that by the implicit function theorem, according to our definition, the free boundary of a comparison sub/supersolution is $C^2$.

\smallskip

As a consequence of the previous discussion we have

\begin{lem}\label{compare} Let $u$ be a viscosity solution to \eqref{fb} in $\Omega$. If $v$ is a strict  (comparison) subsolution to \eqref{fb} in $\Omega$ and  $u\ge v^+$ in $\Omega$ then $u>v$ in $\Omega^+(v)\cup F(v)$. Analogously, 
if  $v$ is a strict  (comparison) supersolution to \eqref{fb} in $\Omega$ and $v\ge u$ in $\Omega$ then $v>u$ in $\Omega^+(u)\cup F(u)$.
\end{lem}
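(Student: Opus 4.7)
The argument is a standard touching/contradiction scheme. Since $u\geq v^+\geq v$ in $\Omega$ (because $v\leq v^+$ always), if the conclusion $u>v$ failed at some point of $\Omega^+(v)\cup F(v)$, there would have to exist a touching point $x_0\in \Omega^+(v)\cup F(v)$ at which $u(x_0)=v(x_0)$. I would then split into the interior case $x_0\in\Omega^+(v)$ and the free boundary case $x_0\in F(v)$, and in each case derive a contradiction with the \emph{strict} inequalities built into Definition \ref{defsub}.

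\textbf{Interior case.} If $x_0\in\Omega^+(v)$, then $v(x_0)>0$ and so $u(x_0)>0$, hence $x_0\in\Omega^+(u)$. In a neighborhood of $x_0$ we have $u\geq v^+=v$ with equality at $x_0$, so $v$ touches $u$ from below there. By Definition \ref{defsub}, $v\in C^2(\overline{\Omega^+(v)})$ and $\nabla v(x_0)\neq 0$, so $v$ is an admissible test function. Since $u$ is a viscosity solution of \eqref{fb}, Theorem \ref{defnhsol2}(ib) applies in $\Omega^+(u)$ and yields
\[
\Delta_{p(x_0)}v(x_0)\leq f(x_0),
\]
which contradicts the strict subsolution inequality $\Delta_{p(x)}v> f$ in $\Omega^+(v)$ from Definition \ref{defsub}(1).

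\textbf{Free boundary case.} If $x_0\in F(v)$, then $v(x_0)=0$ and therefore $u(x_0)=0$. I first check that $x_0\in F(u)$: since $x_0\in\partial\Omega^+(v)\cap\Omega$, every neighborhood of $x_0$ contains points where $v>0$, and at such points $u\geq v^+>0$; combined with $u(x_0)=0$ and $u\geq 0$, this puts $x_0$ in $\partial\Omega^+(u)\cap\Omega=F(u)$. Now take $\varphi=v$ as a test function in Theorem \ref{defnhsol2}(2): $\varphi\in C(\Omega)\cap C^2(\overline{\Omega^+(\varphi)})$, $\varphi^+=v^+$ touches $u$ from below at $x_0$ (since $u\geq v^+$ and $u(x_0)=0=v^+(x_0)$), and $\nabla\varphi(x_0)=\nabla v(x_0)\neq 0$ by Definition \ref{defsub}. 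The free boundary condition of the viscosity solution thus gives $|\nabla v(x_0)|\leq g(x_0)$, contradicting the strict inequality $|\nabla v(x_0)|>g(x_0)$ from Definition \ref{defsub}(2). The supersolution statement is identical, with the roles of sub/super and above/below swapped, invoking Theorem \ref{defnhsol2}(ia) in place of (ib) and the opposite free boundary inequality.

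\textbf{Main obstacle.} There is no serious technical difficulty here; the only small subtlety is ensuring, in the free boundary case, that the touching point indeed lies in $F(u)$ so that the viscosity free boundary condition for $u$ can be triggered. The whole argument hinges on the fact that both defining properties of a strict (comparison) subsolution in Definition \ref{defsub} are \emph{strict} inequalities, which is exactly what is needed to contradict the non-strict inequalities coming from the viscosity notion.
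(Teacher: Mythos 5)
Your proof is correct and follows the standard touching/contradiction scheme that the paper clearly intends (the paper gives no explicit proof, stating only that the lemma follows "as a consequence of the previous discussion", i.e., from Definitions \ref{defnhsol1}, \ref{defsub} and Theorem \ref{defnhsol2}). Both the interior case (contradicting the strict PDE inequality via Theorem \ref{defnhsol2}(ib)/(ia)) and the free-boundary case (verifying $x_0\in F(u)$ and contradicting the strict gradient inequality via Theorem \ref{defnhsol2}(2)) are handled correctly, and the key observation that $u\ge v^+\ge v$ forces a genuine touching point is exactly the right starting point.
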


\medskip

\noindent {\bf Notation.} From now on $B_{\rho}(x_0)\subset {\R}^n$ will denote the open ball of radius $\rho$ centered at $x_0$, and 
$B_{\rho}=B_{\rho}(0)$. A positive constant depending only on the dimension $n$, $p_{\min}$, $p_{\max}$ will be called a universal constant. We will use $c$, $c_i$ to denote small universal constants and $C$, $C_i$ to denote large universal constants.

\section{A Harnack inequality for $\Delta_{p(x)}u=f$}\label{section4}

In this section we prove a Harnack inequality for $\Delta_{p(x)}u=f$, under a small perturbation assumption (Theorem \ref{harnack-small-perturb}).

We first prove

\begin{lem}\label{approx-by-p0} Assume that $1<p_{\min}\le p(x)\le p_{\max}<\infty$
with  $p(x)$ Lipschitz continuous in $B_1$ and $\|\nabla p\|_{L^{\infty}}\leq L$, for some $L>0$.
Let $p_0$ be such that $p_{\min}\le p_0\le p_{\max}$ and  $f\in L^{\infty}(B_1)$.

Let $u\in W^{1,p(\cdot)}(B_1)\cap L^{\infty}(B_1)$
be a nonnegative solution to
\begin{equation*}
\Delta_{p(x)}u =f \quad\mbox{in } B_1,
\end{equation*}
with $||u||_{L^{\infty}(B_{1})}\le M$, for some $M>0$.

Given $\eta>0$, there exists $\ep_0=\ep_0(\eta, n,p_{\min},p_{\max}, M,L)>0$ 
such that if 
\begin{equation*}
||f||_{L^{\infty}(B_{1})}\le \ep,\qquad ||p-p_0||_{L^{\infty}(B_{1})}\le \ep,
\end{equation*}
with $\ep\le\ep_0$, then 
\begin{equation}\label{eq-u_0-b}
||u-u_0||_{L^{\infty}(B_{3/4})}\le\eta,
\end{equation}
for a suitable  $u_0\in W^{1,\infty}(B_{3/4})$ nonnegative solution to
\begin{equation}\label{eq-u_0-a}
\Delta_{p_0}u_0 =0 \quad\mbox{in }B_{3/4}.
\end{equation}
\end{lem}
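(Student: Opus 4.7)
The plan is to argue by contradiction via a compactness/stability argument. Suppose the conclusion fails. Then there exist $\eta_0>0$ and sequences $p_k$, $f_k$, $u_k$ with $p_k\in C^1(B_1)$ satisfying $p_{\min}\le p_k\le p_{\max}$, $\|\nabla p_k\|_{L^\infty(B_1)}\le L$, $\|p_k-p_0\|_{L^\infty(B_1)}\le 1/k$ and $\|f_k\|_{L^\infty(B_1)}\le 1/k$, together with nonnegative $u_k\in W^{1,p_k(\cdot)}(B_1)\cap L^\infty(B_1)$, $\|u_k\|_{L^\infty(B_1)}\le M$, which are weak solutions of $\Delta_{p_k(x)}u_k=f_k$ in $B_1$, but for which $\|u_k-v\|_{L^\infty(B_{3/4})}>\eta_0$ for every nonnegative $v\in W^{1,\infty}(B_{3/4})$ solving $\Delta_{p_0}v=0$ in $B_{3/4}$.

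Next I would invoke the interior Hölder estimate for bounded weak solutions of the inhomogeneous $p(x)$-Laplacian (e.g., Fan--Zhao, Wolanski): there exist $\alpha_0\in(0,1)$ and $C_0>0$ depending only on $n,p_{\min},p_{\max},L,M$ and $\sup_k\|f_k\|_{L^\infty}$ such that $\|u_k\|_{C^{0,\alpha_0}(\overline{B_{7/8}})}\le C_0$. By Arzel\`a--Ascoli, along a subsequence $u_k\to u_0$ uniformly on $\overline{B_{3/4}}$, with $u_0$ nonnegative and $\alpha_0$-H\"older continuous.

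The key step is to identify $u_0$ as a nonnegative weak solution of $\Delta_{p_0}u_0=0$ in $B_{3/4}$. By Theorem \ref{weak-is-visc}, each $u_k$ is a viscosity solution of $\Delta_{p_k(x)}u_k=f_k$ in the positive phase, and in fact a viscosity solution in all of $B_1$ since $u_k\ge 0$ implies that at any touching point on $\{u_k=0\}$ the test function must have vanishing gradient, which is compatible with the degenerate/singular structure in \eqref{p(x)-vs-pucci}. Using the expression for $\Delta_{p(x)}$ on $C^2$ functions with nonvanishing gradient displayed in Section \ref{section2}, the operators $F_k(x,\nabla\varphi,D^2\varphi):=\Delta_{p_k(x)}\varphi-f_k(x)$ converge locally uniformly to $\Delta_{p_0}\varphi$ on the set $\{\nabla\varphi\ne 0\}$. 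The standard stability theorem for viscosity solutions then yields that $u_0$ is a viscosity solution of $\Delta_{p_0}u_0=0$ in $B_{3/4}$. Since the exponent is constant, the equivalence of viscosity and weak solutions for $\Delta_{p_0}u=0$ (Remark \ref{equiv-not}, quoting \cite{JLM}) gives that $u_0$ is a weak $p_0$-harmonic function, hence locally $C^{1,\alpha}$ and in particular in $W^{1,\infty}(B_{3/4})$. For $k$ sufficiently large this contradicts $\|u_k-u_0\|_{L^\infty(B_{3/4})}>\eta_0$, proving the lemma.

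The main obstacle is the passage to the limit in the equation: one needs the uniform H\"older estimate to be quantitative in the data $(p_{\min},p_{\max},L,M)$ only, and one needs a viscosity stability result that tolerates both the singular/degenerate factor $|\nabla u|^{p_k(x)-2}$ and the logarithmic term $\langle\nabla p_k,\nabla u\rangle\log|\nabla u|$ appearing in the non-divergence form of $\Delta_{p_k(x)}$. Working with test functions whose gradient is nonzero at the touching point neutralizes the latter issue, while the former is handled via the uniform modulus of continuity of the coefficients $p_k\to p_0$ and $f_k\to 0$.
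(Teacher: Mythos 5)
Your proposal follows the same contradiction/compactness skeleton as the paper, but it diverges at the crucial compactness and limit-identification steps. The paper invokes the global interior $C^{1,\alpha}$ estimate of Fan (Theorem 1.1 in \cite{Fan}) to get a uniform bound $\|u_k\|_{C^{1,\alpha}(\overline{B_{3/4}})}\le C$ depending only on $n,p_{\min},p_{\max},L,M$. This gives \emph{gradient} convergence $\nabla u_k\to\nabla u_0$ uniformly, so one can pass to the limit directly in the weak formulation
\[
\int |\nabla u_k|^{p_k(x)-2}\nabla u_k\cdot\nabla\varphi\,dx=-\int f_k\varphi\,dx\ \longrightarrow\ \int |\nabla u_0|^{p_0-2}\nabla u_0\cdot\nabla\varphi\,dx=0,
\]
and $u_0\in W^{1,\infty}(B_{3/4})$ and its $p_0$-harmonicity come out in one stroke. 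You instead rely on a uniform $C^{0,\alpha_0}$ bound, obtain only uniform convergence of the $u_k$, and then reach for viscosity-solution stability plus the equivalence of viscosity and weak solutions for the $p_0$-Laplacian.

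This alternate route can be made to work, but you should be aware of two points that make it less economical than the paper's. First, the stability argument you invoke is not the off-the-shelf Crandall--Ishii--Lions result: the operators $\Delta_{p_k(x)}$ are not proper in the standard sense, the viscosity definition used here (Theorem \ref{defnhsol2}) restricts to test functions with nonvanishing gradient, and the logarithmic lower-order term as well as the sign-changing exponent $p_k(x)-2$ both require a tailored stability lemma, which the paper does not prove and which you would need to supply. Second, the uniform $C^{0,\alpha_0}$ bound only gives $u_0\in C^{0,\alpha_0}(\overline{B_{3/4}})$; you then recover $u_0\in W^{1,\infty}(B_{3/4})$ via interior $C^{1,\alpha}$ regularity of $p_0$-harmonic functions, which works because you estimate on the larger ball $B_{7/8}$, but this is an extra step that the $C^{1,\alpha}$ estimate renders unnecessary. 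In short: your argument is a viable alternative, but the paper's use of Fan's $C^{1,\alpha}$ estimate is the sharper tool — it yields gradient convergence, which sidesteps the viscosity machinery entirely and delivers the Lipschitz regularity of $u_0$ for free.
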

\begin{proof} Let us suppose by contradiction that there exist  $\eta_0>0$ and a  sequence of nonnegative
functions   $u_k\in W^{1,p_k(\cdot)}(B_1)\cap L^{\infty}(B_1)$ with  $p_{\min}\leq p_k(x)\leq p_{\max}$, $\|\nabla
p_k\|_{L^{\infty}}\leq L$,  $||u_k||_{L^{\infty}(B_1)}\le M$, such that
\begin{equation*}
||f_k||_{L^{\infty}(B_{1})}\le \frac1k,  \qquad ||p_k-p_0||_{L^{\infty}(B_{1})}\le\frac1k,
\end{equation*} 
\begin{equation*}
\Delta_{p_k(x)}u_k =f_k \quad\mbox{in } B_1,
\end{equation*}
and such that
\begin{equation*}
||u_k-v||_{L^{\infty}(B_{3/4})}\ge\eta_0,
\end{equation*}
for every  $v\in W^{1,\infty}(B_{3/4})$ nonnegative solution to  $\Delta_{p_0}v =0$ in $B_{3/4}$.

Then, by  Theorem 1.1 in \cite{Fan} we obtain  that
\begin{equation*}\label{cota-vk-grad}
||u_k||_{C^{1,\alpha}(\overline{B_{3/4}})}\le  C \quad\mbox{ with }\quad 0<\alpha<1,
\end{equation*}
where $ C$ and $\alpha$ depend only on $n$,  $p_{\min}$, $p_{\max}$,
	$L$ and $M$.  Therefore, there is a function
$u_0\in C^{1,\alpha}(\overline{B_{3/4}})$ such that, for a subsequence,
\begin{equation*}
u_k\rightarrow u_0    \quad\mbox{and}\quad \nabla u_k\rightarrow \nabla u_0\quad\mbox {uniformly in }\overline{B_{3/4}}.
\end{equation*}
Since 
\begin{equation*}
f_k\rightarrow 0    \quad\mbox{and}\quad  p_k\rightarrow  p_0\quad\mbox {uniformly in }{B_{1}},
\end{equation*}
it follows that  $u_0\in W^{1,\infty}(B_{3/4})$ is a nonnegative solution to 
\begin{equation*}
\Delta_{p_0}u_0 =0 \quad\mbox{in }B_{3/4}
\end{equation*}
and thus,
\begin{equation*}
0<\eta_0\le||u_k-u_0||_{L^{\infty}(B_{3/4})}\to 0,
\end{equation*}
which gives a contradiction and concludes the proof. 
\end{proof}

As a consequence we get 

\begin{thm}
\label{harnack-small-perturb} Assume that $1<p_{\min}\le p(x)\le p_{\max}<\infty$
with  $p(x)$ Lipschitz continuous in $\Omega$ and $\|\nabla p\|_{L^{\infty}}\leq L$, for some $L>0$.
Let $p_0$ be such that $p_{\min}\le p_0\le p_{\max}$ and  $f\in L^{\infty}(\Omega)$. Let $x_0\in \Omega$ and $0<R_1\le R\le R_2 $ such
that $B_R(x_0)\subset\Omega$.

Let $u\in W^{1,p(\cdot)}(B_R(x_0))\cap L^{\infty}(B_R(x_0))$
be a nonnegative solution to
\begin{equation*}
\Delta_{p(x)}u =f \quad\mbox{in } B_R(x_0),
\end{equation*}
with $||u||_{L^{\infty}(B_R(x_0))}\le M$, for some $M>0$.

Given $\sigma>0$, there exist positive constants $\ep_1=\ep_1(\sigma, n,p_{\min},p_{\max}, M,L, R_1, R_2)$ and $C=C(n, p_{\min},p_{\max})$ such that if 
\begin{equation}\label{smaller-than-ep}
||f||_{L^{\infty}(B_R(x_0))}\le \ep,\qquad ||p-p_0||_{L^{\infty}(B_R(x_0))}\le \ep,
\end{equation}
with $\ep\le\ep_1$, then 
\begin{equation}\label{harnack-ineq}
\sup_{{B_{R/2}(x_0)}}u\leq C\inf_{B_{R/2}(x_0)}u+ \sigma.
\end{equation}
\end{thm}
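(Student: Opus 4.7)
My plan is to derive this Harnack inequality from two ingredients: a version of Lemma \ref{approx-by-p0} adapted to arbitrary balls $B_R(x_0)$ with $R\in[R_1,R_2]$, and the classical Harnack inequality for nonnegative $p_0$-harmonic functions (Serrin/Trudinger).

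First I would establish a $B_R(x_0)$-version of Lemma \ref{approx-by-p0}: for every $\eta>0$ there exists $\ep_2=\ep_2(\eta,n,p_{\min},p_{\max},M,L,R_1,R_2)>0$ such that, under the smallness hypothesis \eqref{smaller-than-ep} with $\ep\leq \ep_2$, there is a nonnegative $u_0\in W^{1,\infty}(B_{3R/4}(x_0))$ with $\Delta_{p_0}u_0=0$ in $B_{3R/4}(x_0)$ and
\[
\|u-u_0\|_{L^\infty(B_{3R/4}(x_0))}\leq \eta.
\]
After translating so $x_0=0$, this is obtained by the same compactness-by-contradiction argument of Lemma \ref{approx-by-p0}: along a contradicting sequence one extracts $R_k\to R_\ast\in[R_1,R_2]$, invokes the $C^{1,\alpha}$ estimate from \cite{Fan} on $\overline{B_{3R_k/4}}$ (with a bound depending on $n,p_{\min},p_{\max},L,M$ and on $R_1,R_2$), and passes to the limit to obtain a nonnegative $p_0$-harmonic $u_0$ on $B_{3R_\ast/4}$, contradicting the assumed failure of approximation.

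Next I would invoke the classical Harnack inequality for nonnegative $p_0$-harmonic functions: there exists $C_0=C_0(n,p_0)$ with
\[
\sup_{B_{R/2}(x_0)} u_0 \leq C_0 \inf_{B_{R/2}(x_0)} u_0.
\]
Since $p_0\mapsto C_0(n,p_0)$ is continuous on $(1,\infty)$, it is bounded on $[p_{\min},p_{\max}]$, so $C_0$ may be chosen to depend only on $n,p_{\min},p_{\max}$. Combining with the approximation estimate, $u\leq u_0+\eta$ and $u_0\leq u+\eta$ on $B_{3R/4}(x_0)$ give
\[
\sup_{B_{R/2}(x_0)} u \leq \sup_{B_{R/2}(x_0)} u_0+\eta \leq C_0\inf_{B_{R/2}(x_0)} u_0+\eta \leq C_0\inf_{B_{R/2}(x_0)} u + (C_0+1)\eta.
\]
Choosing $\eta:=\sigma/(C_0+1)$ and setting $\ep_1:=\ep_2(\sigma/(C_0+1),n,p_{\min},p_{\max},M,L,R_1,R_2)$ yields \eqref{harnack-ineq} with $C=C_0$.

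The main obstacle I expect is justifying the $B_R(x_0)$-version of Lemma \ref{approx-by-p0}. A naive rescaling $\tilde u(y)=u(x_0+Ry)$ does \emph{not} satisfy a clean $p(x)$-Laplace equation: because $\tilde p(y)=p(x_0+Ry)$ varies with $y$, the prefactor $R^{\tilde p(y)-1}$ produces an extra term proportional to $(\log R)\,\nabla\tilde p\cdot|\nabla\tilde u|^{\tilde p-2}\nabla\tilde u$, so the dilation does not reduce the $B_R$ case to $B_1$. This is why the dependence on $R_1,R_2$ enters $\ep_1$. The cleanest way around this is to avoid rescaling altogether and simply rerun the compactness argument of Lemma \ref{approx-by-p0} directly on $B_R(x_0)$, which is legitimate because Fan's $C^{1,\alpha}$ estimate is available on any ball with constants depending continuously on the radius.
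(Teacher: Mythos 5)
Your proof is essentially correct and follows the same strategy as the paper: approximate $u$ by a $p_0$-harmonic function $u_0$ via a compactness argument, then apply the classical Harnack inequality (Serrin/Trudinger) to $u_0$ and absorb the error $\eta$ by choosing it small enough relative to $\sigma$. The conclusion and the dependence of $\ep_1$ and $C$ on the parameters all come out as claimed.

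The one place where you misidentify the difficulty is the rescaling. You consider the un-normalized change of variables $\tilde u(y)=u(x_0+Ry)$, observe correctly that the resulting equation picks up a spurious $(\log R)\,\nabla\tilde p\cdot|\nabla\tilde u|^{\tilde p-2}\nabla\tilde u$ term from the $R^{\tilde p(y)-1}$ prefactor, and conclude that rescaling does not reduce $B_R$ to $B_1$. But the paper uses the \emph{normalized} rescaling $\bar u(y)=u(x_0+Ry)/R$, for which $\nabla\bar u(y)=\nabla u(x_0+Ry)$ exactly, with no $R$-factor at all; working through the weak formulation one finds $\Delta_{\bar p(y)}\bar u=\bar f(y)$ cleanly, with $\bar p(y)=p(x_0+Ry)$, $\bar f(y)=Rf(x_0+Ry)$, $\|\nabla\bar p\|_{L^\infty}\le L$ (for $R\le 1$), and $\|\bar u\|_{L^\infty}\le M/R_1$. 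No logarithmic correction appears. The genuine reason $R_1,R_2$ enter $\ep_1$ is that the $L^\infty$ bound on the rescaled function becomes $M/R_1$ (and, for $R>1$, one needs an upper bound $R_2$ to control $\|\nabla\bar p\|_{L^\infty}$), not that rescaling is unavailable. Your fallback — rerunning the compactness argument directly on $B_R(x_0)$ with $R\in[R_1,R_2]$ and using the continuity of Fan's $C^{1,\alpha}$ constants in the radius — is a valid alternative and yields the same result, so the proof as written still goes through; it just does more work than necessary to circumvent a nonexistent obstacle.
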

\begin{proof} We assume without loss of generality that $x_0=0$.

{\it Case I.} Suppose first that $R=1$.

We let $\eta>0$ to be precised later. We now take $\ep_0=\ep_0(\eta, n,p_{\min},p_{\max}, M,L)$ 
given by Lemma \ref{approx-by-p0}.
 Then, 
if \eqref{smaller-than-ep} is satisfied with $\ep\le \ep_0$, there holds \eqref{eq-u_0-b}, for a suitable 
 $u_0\in W^{1,\infty}(B_{3/4})$ nonnegative solution to \eqref{eq-u_0-a}.

By Harnack's inequality (Theorem 1.1 in \cite{T}), there exists a positive constant $C=C(n, p_{\min},p_{\max})$ such that
$$\sup_{{B_{1/2}}}u_0\leq C\inf_{B_{1/2}}u_0.$$
Since $||u-u_0||_{L^{\infty}(B_{3/4})}\le\eta$, we obtain
\begin{equation*}
\begin{aligned}
\sup_{{B_{1/2}}}u&\leq \sup_{{B_{1/2}}}u_0+\eta\le   C\inf_{B_{1/2}}u_0+\eta \\
& \le  C\inf_{B_{1/2}}u +(C+1)\eta\le C\inf_{B_{1/2}}u +\sigma,
		\end{aligned}
\end{equation*}
if we choose $\eta$ such that $(C+1)\eta<\sigma$. So \eqref{harnack-ineq} follows if  \eqref{smaller-than-ep} is satisfied with
$\ep\le \tilde{\ep}_0$, where $\tilde{\ep}_0=\tilde{\ep}_0(\sigma, n,p_{\min},p_{\max}, M,L)$.

\smallskip

{\it Case II.} We now assume that $0<R_1\le R\le 1 $ and consider $\bar{u}(x)=\frac{u(Rx)}{R}$. Then, 
$\bar{u}\in W^{1,\bar{p}(\cdot)}(B_1)\cap L^{\infty}(B_1)$ is a nonnegative solution to
\begin{equation*}
\Delta_{\bar{p}(x)}\bar{u} =\bar{f} \quad\mbox{in } B_1,
\end{equation*}
with $||\bar{u}||_{L^{\infty}(B_1)}\le \overline{M}$, where $\bar{p}(x)=p(Rx)$, $\|\nabla \bar{p}\|_{L^{\infty}}\leq L$, $\bar{f}(x)=R f(Rx)$ and $\overline{M}=\frac{M}{R_1}$.

Then, if 
\begin{equation*}
||\bar{f}||_{L^{\infty}(B_1)}\le ||{f}||_{L^{\infty}(B_R)}\le \ep,\qquad 
||\bar{p}-p_0||_{L^{\infty}(B_1)}=||{p}-p_0||_{L^{\infty}(B_R)}\le \ep,
\end{equation*}
for $\ep\le \tilde{\ep}_0$, with $\tilde{\ep}_0=\tilde{\ep}_0(\sigma, n,p_{\min},p_{\max}, \overline{M},L)$ chosen as in {\it Case I}, we get
\begin{equation*}
\sup_{{B_{1/2}}}\bar{u}\leq C\inf_{B_{1/2}}\bar{u}+ \sigma.
\end{equation*}
That is,
\begin{equation*}
\sup_{{B_{R/2}}}{u}\leq C\inf_{B_{R/2}}{u}+ R\sigma\le C\inf_{B_{R/2}}{u}+ \sigma.
\end{equation*}
Hence we get \eqref{harnack-ineq}, if  \eqref{smaller-than-ep} is satisfied with $\ep\le\ep_1(\sigma, n,p_{\min},p_{\max}, M,L, R_1)$.

\smallskip

{\it Case III.} Finally, if we assume that $0<R_1\le R\le R_2$, we proceed as in {\it Case II} and we obtain the desired result with 
$\ep_1=\ep_1(\sigma, n,p_{\min},p_{\max}, M,L, R_1, R_2)$.
\end{proof}

\section{Lipschitz continuity and nondegeneracy}\label{section6}

In this section we prove Theorem \ref{Lip-contin}, which gives the optimal regularity for viscosity solutions to \eqref{fb}, i.e., the local Lipschitz continuity. We also prove that if $F(u)$ is a Lipschitz graph, then viscosity solutions to \eqref{fb} are nondegenerate.

We recall the following result we proved in \cite{FL}

\begin{lem}\label{barry} Let $x_0\in B_1$ and $0<\bar{r}_1<\bar{r}_2\le 1$. 
Assume that $1<p_{\min}\le p(x)\le p_{\max}<\infty$
 and $\|\nabla p\|_{L^{\infty}}\leq \ep^{1+\theta}$, for some $0<\theta\le 1$. Let $c_1$ and $c_2$ be positive constants.

There exist positive constants $\gamma\ge 1$, $\bar{c}$ and $\ep_0$ such that the function
\begin{equation*}
w(x)=c_1|x-x_0|^{-\gamma}-c_2, 
\end{equation*}
satisfies, for $\bar{r}_1\le |x-x_0|\le \bar{r}_2$,
\begin{equation*}
\Delta_{p(x)}w\geq \bar{c}, \quad \text{for }\, 0<\ep\le \ep_0.
\end{equation*}
Here $\gamma=\gamma(n,p_{\min}, p_{\max})$, $\bar{c}=\bar{c}(p_{\min}, p_{\max},  c_1)$ and $\ep_0=\ep_0(n,p_{\min}, p_{\max}, \bar{r}_1,  c_1)$.
\end{lem}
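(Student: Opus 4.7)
The plan is a direct barrier computation for the radial function $w$. Writing $r = |x-x_0|$ and using $w(x) = c_1 r^{-\gamma} - c_2$ one computes
\[
|\nabla w| = c_1\gamma\, r^{-\gamma-1},\qquad \Delta w = c_1\gamma(\gamma+2-n)\,r^{-\gamma-2},\qquad \Delta_\infty^N w = c_1\gamma(\gamma+1)\, r^{-\gamma-2}.
\]
Plugging these into the identity from Section~\ref{section2},
\[
\Delta_{p(x)}w = |\nabla w|^{p(x)-2}\bigl(\Delta w + (p(x)-2)\Delta_\infty^N w + \langle \nabla p, \nabla w\rangle \log|\nabla w|\bigr),
\]
the two elliptic contributions combine cleanly into $c_1\gamma\bigl[\gamma(p(x)-1)+p(x)-n\bigr]r^{-\gamma-2}$, and the whole expression splits as a leading term
\[
M(x) := (c_1\gamma)^{p(x)-1}\,r^{-(\gamma+1)(p(x)-1)-1}\,\bigl[\gamma(p(x)-1)+p(x)-n\bigr]
\]
plus a perturbation driven by $\nabla p$.

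Next I would fix $\gamma = \gamma(n,p_{\min},p_{\max})\ge 1$ large enough (e.g.\ $\gamma\ge (n+1-p_{\min})/(p_{\min}-1)$) so that $\gamma(p(x)-1)+p(x)-n\ge 1$ for every admissible $p(x)$. Since $\bar{r}_2\le 1$, we have $r^{-(\gamma+1)(p(x)-1)-1}\ge 1$ on $[\bar{r}_1,\bar{r}_2]$, which yields $M(x)\ge 2\bar{c}$, with
\[
2\bar{c} := \min\bigl\{(c_1\gamma)^{p_{\min}-1},\,(c_1\gamma)^{p_{\max}-1}\bigr\},
\]
a positive constant depending only on $c_1, p_{\min}, p_{\max}$ once $\gamma$ is fixed.

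Finally, I would estimate the perturbation. Using $\|\nabla p\|_{L^\infty} \le \varepsilon^{1+\theta}$ and bounding $|\log|\nabla w|| \le |\log(c_1\gamma)| + (\gamma+1)|\log \bar{r}_1|$ (valid since $\bar{r}_1\le r\le 1$), its absolute value is at most $\varepsilon^{1+\theta}(c_1\gamma)^{p(x)-1}\,r^{-(\gamma+1)(p(x)-1)}\,K$ with $K = K(\gamma,c_1,\bar{r}_1)$, and then at most $C\varepsilon^{1+\theta}$ with $C = C(n,p_{\min},p_{\max},c_1,\bar{r}_1)$ after using $r^{-(\gamma+1)(p(x)-1)}\le \bar{r}_1^{-(\gamma+1)(p_{\max}-1)}$. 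Choosing $\varepsilon_0$ so that $C\varepsilon_0^{1+\theta}\le \bar{c}$ then gives $\Delta_{p(x)}w \ge M(x) - C\varepsilon^{1+\theta} \ge \bar{c}$, with exactly the advertised dependencies.

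There is no real obstacle. The only point to watch is this last estimate: when $\bar{r}_1$ is small the factor $\bar{r}_1^{-(\gamma+1)(p_{\max}-1)}$ blows up, and it is precisely this that forces $\varepsilon_0$ to depend on $\bar{r}_1$ while $\bar{c}$ does not. Every other step is an elementary radial computation.
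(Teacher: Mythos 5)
Your proof is correct: the radial derivatives are computed accurately, the two second-order contributions do combine into $c_1\gamma\bigl[\gamma(p(x)-1)+p(x)-n\bigr]r^{-\gamma-2}$, and the choice of $\gamma$, the lower bound on the main term, and the absorption of the $\nabla p$ log-term with $\varepsilon_0 = \min\{1,\bar c/C\}$ (so that $\varepsilon_0$ is $\theta$-independent) all hold up. The paper itself does not reproduce the argument but cites Lemma~4.2 of \cite{FL}; your computation is the standard barrier argument one would expect there, and the only cosmetic discrepancy is that your $\bar c$ inherits a dependence on $n$ through $\gamma$, whereas the statement lists $\bar c = \bar c(p_{\min},p_{\max},c_1)$ (harmless, since $\gamma$ is anyway a fixed universal constant in $n$, $p_{\min}$, $p_{\max}$).
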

\begin{proof} See Lemma 4.2 in \cite{FL}.
\end{proof}

We will now prove two key estimates for  viscosity solutions to \eqref{fb}. Estimate \eqref{bound-lip} will imply that viscosity solutions are locally Lipschitz continuous (see Theorem \ref{Lip-contin}). If $F(u)$ is a Lipschitz continuous graph, we also obtain estimate \eqref{bound-nondeg}, which gives the nondegeneracy of $u$ close to $F(u)$.

We will use the notation $p_{+}^{r}=\sup_{B_{r}}p$ and $p_{-}^{r}=\inf_{B_{r}}p$, for $r>0$ (see \cite{Wo}).

\begin{prop}\label{lemma7-1} Let $p_{\min}\le p_0\le p_{\max}$ and $0<\gamma_0\le g_0\le \|g\|_{L^\infty(B_2)}$.
Let $u$ be a viscosity solution to \eqref{fb} in $B_2$
 such that $0\in F(u)$.
 There exists a constant $0<\tilde{\varepsilon}<1$ such that if
\begin{equation}\label{cindinfy}
 \|f\|_{L^\infty(B_2)}\le \tilde{\varepsilon},\quad ||g-g_0||_{L^{\infty}(B_2)}\le \tilde{\varepsilon}, \quad  
||\nabla p||_{L^{\infty}(B_2)}\le \tilde{\varepsilon},
\quad ||p-p_0||_{L^{\infty}(B_2)}\le \tilde{\varepsilon},
\end{equation}
then   
\begin{equation}\label{bound-lip}
u(x)\leq C_0 \mbox{dist}(x,F(u)),\quad x\in B^+_{1/2}(u).
\end{equation}

Assume moreover that  $F(u)$ is a Lipschitz continuous graph in $B_2$. Then 
\begin{equation}\label{bound-nondeg}
c_0\mbox{dist}(x,F(u))\leq u(x),\quad x\in B^+_{\rho_0}(u).
\end{equation}
The constants $\tilde{\ep}$,   $c_0$ and $C_0$ depend only on $n$, $p_{\min}$, $p_{\max}$, $\|g\|_{L^\infty(B_2)}$ and
$||u||_{L^{\infty}(B_{3/2})}^{p_{+}^{3/2}-p_{-}^{3/2}}$, where $p_{+}^{3/2}=\sup_{B_{3/2}}p$ and $p_{-}^{3/2}=\inf_{B_{3/2}}p$. The constants
$\tilde{\ep}$ and   $c_0$ depend also on the Lipschitz constant of $F(u)$ and on $\gamma_0$, and the constant $\rho_0$ depends only on the Lipschitz constant of $F(u)$.
\end{prop}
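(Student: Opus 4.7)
My plan is to obtain both estimates by barrier-and-Harnack contradiction arguments built from the small-perturbation Harnack inequality (Theorem~\ref{harnack-small-perturb}) and the radial barriers of Lemma~\ref{barry}, with the free boundary comparison principle Lemma~\ref{compare} closing each loop.

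For the Lipschitz estimate \eqref{bound-lip}, I would argue by contradiction: assume $u(x_0)>C_0 d$ for some $x_0\in B^+_{1/2}(u)$ with $d:=\mathrm{dist}(x_0,F(u))$ and $C_0$ arbitrarily large, and pick $y_0\in F(u)$ with $|x_0-y_0|=d$, so that $B_d(x_0)\subset\Omega^+(u)$. After normalizing $u$ by $M:=\|u\|_{L^{\infty}(B_{3/2})}$ and rescaling, Theorem~\ref{harnack-small-perturb} (applied with a suitably small $\sigma$) gives $\inf_{B_{d/2}(x_0)}u\ge c\,u(x_0)-\sigma d\ge c'C_0 d$. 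The normalization by $M$ is delicate because $\Delta_{p(x)}(u/M)$ involves the factor $M^{1-p(x)}$, which varies with $x$ by a multiplicative error of size $M^{p_{+}^{3/2}-p_{-}^{3/2}}$; controlling this variation is what forces the stated dependence of the constants on $\|u\|_{L^{\infty}(B_{3/2})}^{p_{+}^{3/2}-p_{-}^{3/2}}$. I would then place the radial barrier $w(x)=c_1(|x-x_0|^{-\gamma}-d^{-\gamma})$ from Lemma~\ref{barry} on the annulus $d/2\le|x-x_0|\le d$, choosing $c_1$ as the largest value compatible with $w\le u$ on $\partial B_{d/2}(x_0)$, which forces $c_1\sim C_0\, d^{\gamma+1}$. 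Since $w$ is a strict comparison subsolution of \eqref{fb}, the comparison principle yields $w\le u$ on the annulus; at $y_0\in\partial B_d(x_0)\cap F(u)$ the function $w^+$ then touches $u$ from below, so the viscosity free boundary condition imposes $|\nabla w(y_0)|\le g(y_0)\le\|g\|_{L^{\infty}}$. However, a direct computation gives $|\nabla w(y_0)|=c_1\gamma d^{-\gamma-1}\sim C_0$, which fails once $C_0$ is chosen large enough.

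For the nondegeneracy \eqref{bound-nondeg}, the Lipschitz graph assumption on $F(u)$ supplies, for every $x\in B^+_{\rho_0}(u)$ sufficiently close to $F(u)$, an interior tangent ball $B_r(z)\subset\Omega^+(u)$ touching $F(u)$ at a single point $y_0$, with $r\sim\mathrm{dist}(x,F(u))$ and $x\in\overline{B_{r/2}(z)}$; the admissible size $\rho_0$ depends only on the Lipschitz constant of $F(u)$. Arguing again by contradiction, if $u(x)<c_0\, r$ with $c_0$ small, Theorem~\ref{harnack-small-perturb} on $B_r(z)$ forces $\sup_{B_{r/2}(z)}u\le Cc_0\,r$. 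I would then place from above an appropriate radial strict \emph{super}solution $v$ of \eqref{fb}---the supersolution analogue of the barrier of Lemma~\ref{barry}, vanishing at $y_0$ with a small controlled gradient and satisfying $\Delta_{p(x)}v<f$---and slide it until first contact with $u$. By Lemma~\ref{compare} the contact cannot occur at an interior point of $\Omega^+(u)$, hence must occur at some $z^{\ast}\in F(u)\cap F(v)$, where the viscosity condition imposes $|\nabla v(z^{\ast})|\ge g(z^{\ast})\ge\gamma_0$, contradicting $|\nabla v(z^{\ast})|\sim Cc_0$ as soon as $c_0$ is chosen small.

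The hardest step in both arguments is the Harnack application, and its failure with the classical Harnack inequality~\eqref{harnack-p(x)} for $\Delta_{p(x)}u=f$ is precisely what motivates the new Theorem~\ref{harnack-small-perturb} of Section~\ref{section4}: the additive term $R+\mu R$ in \eqref{harnack-p(x)} is not scale-invariant and would prevent either contradiction from closing. Trading smallness of $\|f\|_{\infty}$ and $\|p-p_0\|_{\infty}$ (supplied by \eqref{cindinfy}) for smallness of that additive term, via Theorem~\ref{harnack-small-perturb}, makes the $\sigma d$ correction (respectively $\sigma r$) negligible compared to the main term, so that the linear growth rate of the barrier gradient beats $\|g\|_{L^{\infty}}$ in the upper bound and beats $\gamma_0$ in the lower bound. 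The $\|u\|_{L^{\infty}(B_{3/2})}^{p_{+}^{3/2}-p_{-}^{3/2}}$ dependence of the final constants is the unavoidable price paid for normalizing $u$ in a non-scale-invariant equation.
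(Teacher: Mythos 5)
Your overall strategy — barriers from Lemma~\ref{barry} combined with Harnack inequalities and the comparison principle — is the right family of tools, and your closing paragraph correctly identifies why the classical Harnack estimate \eqref{harnack-p(x)} cannot close the loop by itself. However, both halves of the proof deviate from the paper's argument in ways that leave genuine gaps.

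\textbf{Upper bound.} You propose to use Theorem~\ref{harnack-small-perturb}, after ``normalizing $u$ by $M$.'' The paper in fact uses the \emph{classical} Wolanski Harnack inequality \eqref{harnack-p(x)} here, applied directly to the rescaled function $\tilde u(x)=u(x_0+dx)/d$ on a fixed ball $B_r$ with $r$ universal. The dependence on $\|u\|_{L^\infty(B_{3/2})}^{p_+^{3/2}-p_-^{3/2}}$ arises because the constant $C_H$ in \eqref{harnack-p(x)} depends on $\|\tilde u\|_{L^\infty}^{\tilde p_+-\tilde p_-}$, which the paper controls via \eqref{bound-norm-u-1}--\eqref{bound-norm-u-3}; it is not an artifact of dividing the solution by $M$. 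Your route would also run into two difficulties: (i) Theorem~\ref{harnack-small-perturb} requires $\|\tilde u\|_{L^\infty}\le M$ with $M$ fixed, but a priori the rescaled function $\tilde u$ need not be bounded — indeed bounding $\tilde u(0)$ is exactly what is being proved; (ii) the theorem requires the radius to lie in a fixed interval $[R_1,R_2]$ while $d\to 0$, so its constants cannot be applied uniformly at scale $d$. Moreover, the barrier should be deployed in the rescaled annulus $B_1\setminus \overline B_r$ with $r$ universal (so that the thresholds in Lemma~\ref{barry} stay fixed); using the original annulus $d/2\le|x-x_0|\le d$ lets the threshold $\bar\ep_0$ degenerate as $d\to 0$.

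\textbf{Lower bound.} This is where the larger gap lies. Your plan is: pick an interior tangent ball $B_r(z)$ touching $F(u)$ at $y_0$, apply Theorem~\ref{harnack-small-perturb} once on $B_r(z)$ to get $\sup_{B_{r/2}(z)}u\le Cc_0 r$, then slide a radial supersolution to a contradiction. Several things go wrong. First, a radial supersolution $v$ on an annulus around $y_0$ with $v=0$ on the outer sphere cannot dominate $u$ there, since $u>0$ on most of $\partial B_r(z)\subset\Omega^+(u)$; this is a one-phase problem, so the naive annular barrier does not trap $u$. Second, Theorem~\ref{harnack-small-perturb} does not scale: its additive error is $\sigma$, not $\sigma r$, so a single application at radius $r\to0$ does not yield $\sup u\le Cc_0 r$. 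Third, and most importantly, you have omitted the Harnack-chain / NTA argument that is the crux of the paper's proof. The paper first proves (as Step II) that $u$ is Lipschitz, then uses a supersolution $\widetilde G$ slid in from far outside the positivity set until it touches $\tilde u$ at some $\tilde z$ with $\tilde u(\tilde z)=\eta$ universal; the Lipschitz estimate then gives $\mathrm{dist}(\tilde z,F(\tilde u))\gtrsim\eta$, so $\tilde z$ lies a universal distance inside $\Omega^+(\tilde u)$. Since $F(u)$ is a Lipschitz graph, $\Omega^+(\tilde u)$ is NTA, and a Harnack chain of bounded length $m$ connects $0$ to $\tilde z$. Theorem~\ref{harnack-small-perturb} is used precisely on the chain balls (radius $\sim1$ in the rescaled picture, so the $R_1$ condition is satisfied and the rescaled sup-norm is universal because of the Lipschitz step), and iterating $m$ times gives $\tilde u(0)\ge c_1\eta-\sigma c_2$; choosing $\sigma$ universal closes the argument. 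Without the chain, a local sup-bound near $y_0$ cannot propagate to a lower bound at $x$, which is what nondegeneracy requires.
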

\begin{proof}
 Without loss of generality we will assume that $g_0=1$. We let $x_0\in B_{1/2}^+(u)$  and we denote $d=\mbox{dist}(x_0, F(u)).$ We consider the 
rescaled function
\begin{equation}\label{tilde-u}
\tilde{u}(x)=\frac{u(x_0+d x)}{d}.
\end{equation} 
Then $\tilde{u}$ is a viscosity solution to \eqref{fb} with right hand side  $\tilde{f}(x)=df(x_0 + dx)$, exponent $\tilde{p}(x)=p(x_0 + d x)$ and free boundary condition $\tilde{g}(x)=g(x_0+dx).$ Since $d\le 1$, the assumptions \eqref{cindinfy} hold for the rescaled functions in $B_{3/2}$.

In particular, $\tilde{u}$ is well defined in the ball $\overline{B_1}$, with $\tilde u>0$ in $B_1$,  and it satisfies the equation 
\begin{equation}\label{eq-tildeu}
\Delta_{\tilde{p}(x)} \tilde{u}=\tilde{f}\quad \text{ in } B_1.
\end{equation} 
We will show that 
\begin{equation}\label{upp-low-bound}
c_0\leq\tilde{u}(0)\leq C_0,
\end{equation}
for suitable universal constants $C_0,c_0>0$.

\medskip

{\it Step I: Upper bound.} Let us prove the upper bound in \eqref{upp-low-bound}. We  will argue by contradiction, assuming that $\tilde{u}(0)>C_0$, with $C_0\ge 1$ to be  precised later.

We will use  a barrier like the one considered in Lemma \ref{barry}, in the annulus $B_1\setminus \overline{B}_{r}$, with $r$ suitably chosen.

 We are going to fix $0<r<1$ in a universal way, keeping in mind  the particular form of Harnack's inequality for the $p(x)$-Laplacian
(see Theorem 2.1 \cite{Wo}). In fact,  since there holds \eqref{eq-tildeu},  it follows from  \cite{Wo} that there exists a positive constant $C_H$ such that 
\begin{equation}\label{harn-tildeu}
\sup_{B_{r}}\tilde u\leq C_H(\inf_{B_{r}}\tilde u+r({{||\tilde{f}||}_{L^\infty}}^{\frac{1}{p_{\max}-1}}+1)),
\end{equation} 
if $r< \frac{1}{4}$. Using that ${||\tilde{f}||}_{L^\infty}\le 1$ and ${||\nabla \tilde{p}||}_{L^\infty}\le 1$, we obtain  that
the constant $C_H$ depends only on $n$, $p_{\min}$, $p_{\max}$ and 
$||\tilde{u}||_{L^{\infty}(B_{4r})}^{\tilde{p}_{+}^{4r}-\tilde{p}_{-}^{4r}}$, where $\tilde{p}_{+}^{4r}=\sup_{B_{4r}}\tilde{p}$ and 
$\tilde{p}_{-}^{4r}=\inf_{B_{4r}}\tilde{p}$.

We now notice that 
\begin{equation}\label{bound-norm-u-1}
{||\tilde{u}||}_{L^{\infty}(B_{4r})}^{\tilde{p}_{+}^{4r}-\tilde{p}_{-}^{4r}}\le ||u||_{L^{\infty}(B_{d}(x_0))}^{\tilde{p}_{+}^{4r}-\tilde{p}_{-}^{4r}} \Big(\frac{1}{d}\Big)^{\tilde{p}_{+}^{4r}-\tilde{p}_{-}^{4r}},
\end{equation}
\begin{equation}\label{bound-norm-u-2}
{\tilde{p}_{+}^{4r}-\tilde{p}_{-}^{4r}}\le {||\nabla \tilde{p}||}_{L^{\infty}(B_{4r})} 8r\le d{||\nabla {p}||}_{L^{\infty}(B_{d}(x_0))} 2\le 2d,
\end{equation}
and also
\begin{equation}\label{bound-norm-u-3}
{\tilde{p}_{+}^{4r}-\tilde{p}_{-}^{4r}} = \sup_{x\in B_{4r}}{p(x_0+dx)}-\inf_{x\in B_{4r}}{p(x_0+dx)}\le 
\sup_{B_{d}(x_0)}{p}-\inf_{B_{d}(x_0)}{p}.
\end{equation}
Then, from \eqref{bound-norm-u-1}, \eqref{bound-norm-u-2} and \eqref{bound-norm-u-3} and using that $B_{d}(x_0)\subset B_{3/2}$,
 we conclude that
\begin{equation*}
{||\tilde{u}||}_{L^{\infty}(B_{4r})}^{\tilde{p}_{+}^{4r}-\tilde{p}_{-}^{4r}}\le 
c\max\Big\{ 1, \ ||u||_{L^{\infty}(B_{3/2})}^{{p}_{+}^{3/2}-{p}_{-}^{3/2}}\Big\},
\end{equation*}
where $c=\sup_{x\in (0,1)} \big(\frac{1}{x}\big)^{2x}$,  $p_{+}^{3/2}=\sup_{B_{3/2}}p$ and $p_{-}^{3/2}=\inf_{B_{3/2}}p$.

Hence, from \eqref{harn-tildeu} and the fact that ${||\tilde{f}||}_{L^\infty}\le 1$,  we deduce that for every $x\in B_r,$ 
\begin{equation}\label{stimaineq}
\frac{\tilde{u}(0)}{C_H}- 2r\leq  \inf_{B_{r}}\tilde u\leq \tilde{u}(x).
\end{equation} 
We now fix $r=\min\{\frac{1}{8}, \frac{1}{4C_H}\}$, and using that $\tilde{u}(0)>C_0\geq 1,$ we get from \eqref{stimaineq}
\begin{equation*}
\tilde{u}(x)\ge \frac{\tilde{u}(0)}{2C_H}, \quad x\in \overline{B}_r.
\end{equation*}

Next let  
$$w(x)=|x|^{-\gamma}-1,$$
where we fix $\gamma=\gamma(n,p_{\min},p_{\max})\ge 1$ given in  Lemma \ref{barry}.

We  denote
\begin{equation}\label{def-Gbar}
G(x)=\bar{C}w(x)=\bar{C}\big(|x|^{-\gamma}-1\big)
\end{equation}
in $B_1\setminus \overline{B}_{r},$ where we fix $\bar{C}=\bar{C}(r,\gamma)>0$ in such a way that $G=1$ on $\partial B_{r}.$

Let 
$$
\bar{G}(x)=kG(x)=k\bar{C}\big(|x|^{-\gamma}-1\big),\quad \text{where} \ k=\frac{C_0}{2C_H}.
$$  

Recalling that $\tilde{u}(x)\geq\frac{\tilde{u}(0)}{2C_H}>\frac{C_0}{2C_H}$ in $\bar{B}_r$, we get
\begin{equation}\label{boundary-annulus}
\begin{aligned} 
&\tilde{u}\ge 0 =\bar{G},\quad\mbox{ on }\partial B_1,\\
&\tilde{u}\geq k=\bar{G},\quad\mbox{ on } \partial B_r.
\end{aligned}
\end{equation}

We claim that
\begin{equation}\label{barGsub}
\Delta_{\tilde{p}(x)}\bar{G}\geq \tilde{f}\quad \text {in } B_1\setminus \bar{B}_r,
\end{equation}
if $\tilde{\ep}$ is suitably chosen.

In fact, by Lemma \ref{barry}, we know that 
\begin{equation*}
\Delta_{\tilde{p}(x)}\bar{G}\geq \bar{c}\quad \text {in } B_1\setminus \bar{B}_r,
\end{equation*}
with $\bar{c}=\bar{c}(p_{\min}, p_{\max},  k, \bar{C})$, if $\tilde{\ep}\le \bar{\ep}_0(n,p_{\min}, p_{\max}, {r}, k, \bar{C})$, since
$||\nabla \tilde{p}||_{L^{\infty}}\le \tilde{\varepsilon}$. 
So, if we let $\tilde{\ep}\le \bar{c}$, then $||\tilde{f}||_{L^{\infty}}\le \bar{c}$.
That is, \eqref{barGsub} holds.

Then, from \eqref{eq-tildeu}, \eqref{barGsub} and \eqref{boundary-annulus}, we conclude that $\tilde{u}\ge \bar{G}$ in 
$\overline{B_1\setminus B_r}$, with $\bar{G}\in C^2$ and $\nabla\bar{G}\neq 0$ in that set, and $\bar{G}$ touches $\tilde{u}$ from below at some $z\in\partial B_1\cap F(\tilde{u}).$ Then
$$
2> 1+\tilde{\varepsilon}\geq \tilde{g}(z) \ge |\nabla \bar{G}(z)|=\frac{C_0}{2C_H}|\nabla G(z)|=\frac{C_0\gamma \bar{C}}{2C_H},
$$
so we obtain a contradiction if we choose $C_0=\max\big\{1,  \frac{8C_H}{\gamma\bar{C}}\big\}$. Hence \eqref{bound-lip} follows.

\medskip

{\it Step II: Lipschitz estimate.}  From \eqref{bound-lip} we deduce that $u$ is Lipschitz continuous in $B_{1/4}$, with a Lipschitz constant depending only on $n$, $p_{\min}$, $p_{\max}$ and $C_0$. In fact, this can be seen with similar arguments as those in Theorem \ref{Lip-contin}, {\it Step III}. When estimating the Lipschitz constant, we use that, in the present case, 
$\|f\|_{L^\infty(B_2)}\le \tilde{\varepsilon}<1$ and $||\nabla p||_{L^{\infty}(B_2)}\le \tilde{\varepsilon}<1$.

\medskip

{\it Step III: Lower bound.} Now we assume that  $F(u)$ is a Lipschitz continuous graph in $B_2$. 
Without loss of generality we  assume that $F({u})$ is a Lipschitz graph in the direction $e_n$ with Lipschitz constant $1$.
We want to prove that $\tilde{u}$ given by \eqref{tilde-u} satisfies the lower bound in  \eqref{upp-low-bound}.

We assume moreover that our point $x_0\in B_{1/2}^+(u)$  belongs to $B_{\rho_0}$, with ${\rho_0}<1/5$. Then, $d=\mbox{dist}(x_0, F(u))<\rho_0<1/5$ so $\tilde{u}$ is well defined in the ball $\overline{B_5}$. 

Taking additionally $\rho_0<1/24$, we also obtain from the previous step that $\tilde{u}$ is Lipschitz in ${B_5}$, with Lipschitz constant depending only on $n$, $p_{\min}$, $p_{\max}$ and $C_0$. Moreover, since there exists $\bar{x}\in \partial B_1\cap F(\tilde{u})$, $||\tilde{u}||_{L^{\infty}(B_5)}$ depends only on the Lipschitz constant of $\tilde{u}$ in $B_5$.

Let us point out that also in this part of the proof we need to use more delicate arguments than those in \cite{D}. Thus, we first remark what does not change. Since $F(\tilde{u})$ is a Lipschitz continuous graph, then $\{\tilde{u}>0\}$ is a NTA domain, see \cite{JK}. This fact implies that for every couple of points $\delta$-away from $F(\tilde{u})$ in $\{\tilde{u}>0\}$  such that they are contained in a ball of size 
$\bar{M}\delta,$ there exists a Harnack chain of balls, whose length is of order $\bar{M}$, contained in the domain, connecting the two points. In other words, there exist $k$   balls in  $\{\tilde{u}>0\}$ of radius comparable to $\delta$ ($k$ depending only on $\bar{M}$), such that consecutive balls intersect,  connecting the two points.

As a consequence we will show that, in the present case, we can apply a suitable  Harnack inequality (Theorem \ref{harnack-small-perturb})  at each ball, and this will allow us to estimate the value of $\tilde{u}$ at the first point with the value of $\tilde{u}$ at the last one, times a universal constant, provided  \eqref{cindinfy} holds, for appropriate $\tilde{\ep}$.

We start by considering, for  $\eta>0$,
$$
\widetilde{G}(x)=\eta(1-{G}(x)), \quad \text{ in } B_1\setminus \bar{B_r}
$$
where ${G}$, as well as the constants   $r$, $\gamma$ and $\bar{C}$, are defined as in \eqref{def-Gbar}.

We observe that, $\nabla \widetilde{G}\neq 0$ in  $\overline{B_1\setminus {B_r}}$ and, on $\partial B_r$, 
$$
|\nabla\widetilde{G}|=\eta\bar{C} |\nabla w|=\eta\bar{C}\gamma r^{-1-\gamma},
$$
then we can choose $\eta=\eta(r,\gamma),$ so that  
$$
|\nabla\widetilde{G}|<\frac{1}{2}<1-\tilde{\varepsilon} \quad \text{ on }\partial B_r,
$$
 if  $\tilde{\ep}<\frac{1}{2}$. Now, since  
\begin{equation}\label{strict1}
\Delta_{\tilde{p}(x)}\widetilde{G}= -\Delta_{\tilde{p}(x)}\big({\eta{G}}\big),\qquad \eta {G(x)}= \eta\bar{C}\big(|x|^{-\gamma}-1\big),
\end{equation}
we can apply Lemma \ref{barry} once more and deduce that 
\begin{equation}\label{strict2}
\Delta_{\tilde{p}(x)}\big({\eta{G}}\big)\geq \hat{c}\quad\text {in } B_1\setminus \bar{B_r},
\end{equation}
with $\hat{c}=\hat{c}(p_{\min}, p_{\max},  \eta, \bar{C})$, if $\tilde{\ep}\le \hat{\ep}_0(n,p_{\min}, p_{\max}, {r}, \eta, \bar{C})$, since
$||\nabla \tilde{p}||_{L^{\infty}}\le \tilde{\varepsilon}$. 
So, if we let $\tilde{\ep}<\hat{c}$, then $||\tilde{f}||_{L^{\infty}(B_5)}< \hat{c}$ and therefore, from \eqref{strict1} and \eqref{strict2}, we get
$$\Delta_{\tilde{p}(x)}\widetilde{G}<-||\tilde{f}||_{L^{\infty}(B_5)} \quad\text {in } B_1\setminus \bar{B}_r.$$

That is, $\widetilde G$ is a strict supersolution to the rescaled free boundary problem in $B_1\setminus \bar{B}_r$.

\smallskip

Next, observe that from the assumptions we made, $F(\tilde{u})$ is a Lipschitz graph in the direction $e_n$ with Lipschitz constant $1$ and  consider the function  
$$
\widetilde{G}(x+4e_n)
$$
in $B_1(-4e_n)\setminus \overline{B_r}(-4e_n)$, which is a strict supersolution of our rescaled free boundary problem. There holds that 
$\widetilde{G}(x+4e_n)\geq 0$  as well as  $\widetilde{G}(x+4e_n)\geq \tilde{u}(x)$ in  $B(-4e_n)\setminus \overline{B_r}(-4e_n)$, since
$\tilde{u}\equiv 0$ in  $B_1(-4e_n)$.

Now we move back the graph, by a translation depending on $t>0$, until the graph of the function 
$$
\widetilde{G}(x+(4-t)e_n):  -(4-t)e_n+{(B_1\setminus \overline{B_{r}})}\to \mathbb{R}
$$
touches the graph of $\tilde{u}.$ Let say that the contact happens when $t=t^*$ at a point $\tilde{z}$ such that 
$\tilde{u}(\tilde{z})=\widetilde{G}(\tilde{z}+(4-t^*)e_n).$

Since $\widetilde{G}(x+(4-t^*)e_n))$ is a strict supersolution to the rescaled free boundary problem, recalling the comparison result (see Lemma \ref{compare}),  we conclude that  $\widetilde{G}(x+(4-t^*)e_n)$ cannot touch $\tilde{u}$ from above at the common free boundary sets, neither at interior of the annulus.

Then the contact point $\tilde{z}$ belongs to $-(4-t^*)e_n+\partial B_1.$ As a consequence $\eta=\tilde{u}(\tilde{z})$ and $\tilde{d}=\mbox{dist}(\tilde{z},F(\tilde{u}))\leq 1.$    Since $\tilde{u}$ is Lipschitz continuous with universal constant, then
$\eta=\tilde{u}(\tilde{z})\leq C\tilde{d}$ so that
\begin{equation}\label{NTAineq}
C^{-1}\eta\leq \tilde{d}\leq 1.
\end{equation}
Hence, from \eqref{NTAineq} and by applying the cited result on NTA domains, we know 
that we can construct a Harnack chain connecting $0$ and $\tilde z$, and
the length of this chain, let us say $m$, is bounded by a universal constant.

That is, we have balls $B_{r_i}(x_i)$ with radius $r_i$ comparable to $1$, $B_{2r_i}(x_i)\subset\{\tilde{u}>0\}$, $0\le i\le m$,  $x_0=0$, 
$x_m=\tilde z$ and $y_i\in B_{r_{i-1}}(x_{i-1})\cap B_{r_i}(x_i)$, for $1\le i\le m$.

We can now apply Theorem \ref{harnack-small-perturb} to $\tilde{u}$ at every ball $B_{2r_i}(x_i)$. That is, given $\sigma>0$ there exist $\varepsilon_1=\varepsilon_1(\sigma)$ and $C^*$ universal such that if $||\tilde{p}-p_0||_{L^\infty}\leq \varepsilon$ and $||\tilde{f}||_{L^\infty}\leq\varepsilon,$ with
$\ep\le\ep_1$, then
\begin{equation}\label{harn-small-tildeu}
\sup_{B_{r_i}(x_i)}\tilde{u}\leq C^*  \inf_{B_{r_i}(x_i)}\tilde{u} +\sigma.
\end{equation}
For the application of Theorem \ref{harnack-small-perturb} we need to recall that ${||\tilde{u}||}_{L^\infty(B_5)}\le M$, with $M$ universal.

Now, \eqref{harn-small-tildeu} implies that for any $x,y\in B_{r_i}(x_i)$,
$$
c\tilde{u}(x)-c\sigma\leq \tilde{u}(y),
$$
where we have denoted $c=\frac{1}{C^*}$. Then, we obtain
$$
c\tilde{u}(y_{1})-c\sigma\leq \tilde{u}(x_{0}),
$$
$$
c\tilde{u}(y_{i+1})-c\sigma\leq \tilde{u}(y_{i}),\qquad 1\le i\le m-1,
$$
and
$$
c\tilde{u}(x_{m})-c\sigma\leq \tilde{u}(y_{m}).
$$
Then, iterating we deduce
\begin{equation*}
c^{m+1}\tilde{u}(x_{m})-\sigma\sum_{j=1}^{m+1}c^j\leq \tilde{u}(x_{0}).
\end{equation*}
Thus, since $x_0=0$ and $x_m=\tilde{z},$ we have 
\begin{equation*}
c^{m+1}\tilde{u}(\tilde{z})-\sigma c\,\frac{1-c^{m+1}}{1-c}=c^{m+1}\tilde{u}(\tilde{z})-\sigma\sum_{j=1}^{m+1}c^j\leq \tilde{u}(0).
\end{equation*}
Hence denoting $c_1=c^{m+1}$ and $c_2=c\, \frac{1-c^{m+1}}{1-c},$ we obtain 
\begin{equation*}
c_1\eta-\sigma c_2=c_1\tilde{u}(\tilde{z})-\sigma c_2\leq \tilde{u}(0),
\end{equation*}
where $c_1$ and $c_2$ are universal constants. Now we fix $\sigma$ universal, $$\sigma=\frac{c_1\eta}{2c_2}.$$ 
In this way we conclude that
$$
\tilde{u}(0)\geq c_1\eta-\frac{c_1\eta}{2}=\frac{c_1\eta}{2},
$$
if $||\tilde{p}-p_0||_{L^\infty}\leq||p-p_0||_{L^\infty(B_2)}\leq \tilde{\varepsilon}$ and $||\tilde{f}||_{L^\infty}\leq||f||_{L^\infty(B_2)}\leq\tilde{\varepsilon},$ with
$\tilde{\ep}\le\ep_1(\sigma)$. Since $\eta$ is universal as well, we have finished the proof.\end{proof}

{}From Proposition \ref{lemma7-1}, we can now obtain the proof of Theorem \ref{Lip-contin}.

We  recall again the notation we use: $p_{+}^{r}=\sup_{B_{r}}p$ and $p_{-}^{r}=\inf_{B_{r}}p$, for $r>0$.

\medskip

\begin{proof}[\bf Proof of Theorem \ref{Lip-contin}]

Let $u$ be a viscosity solution to \eqref{fb} in $B_1$. We will divide the proof into several steps.

{\it Step I}. Let us fix $z_0\in B_{5/8}\cap F(u)$.   
For $0<\rho\le \frac{1}{16}$, we consider the function  
$$\bar{u}(x) = \frac{1}{\rho} u(z_0+\rho x), \quad x\in B_2.$$

Then $\bar{u}$ is a viscosity solution to \eqref{fb} in $B_2$,  with right hand side $\bar{f}(x) = \rho f(z_0+\rho x)$, exponent $\bar{p}(x) =  p(z_0+\rho x)$ and 
free boundary condition  $\bar{g}(x) =  g(z_0+\rho x)$. Moreover, $0\in F(\bar{u})$.

Let us see that we can apply the first part of Proposition \ref{lemma7-1} to ${\bar u}$, if $\rho$ is suitably chosen.

For that purpose, let us first show that the constants appearing in that proposition can be taken independent of $\rho$. More precisely, we want to 
find a bound independent of $\rho$ for
$$
||\bar{u}||_{L^{\infty}(B_{3/2})}^{\bar{p}_{+}^{3/2}-\bar{p}_{-}^{3/2}},\qquad \text{where} \quad \bar{p}_{+}^{3/2}=\sup_{B_{3/2}}\bar{p}, \ \ \bar{p}_{-}^{3/2}=\inf_{B_{3/2}}\bar{p}.
$$
In fact, we have
\begin{equation}\label{bound-norm-bu-1-a}
{||\bar{u}||}_{L^{\infty}(B_{3/2})}^{\bar{p}_{+}^{3/2}-\bar{p}_{-}^{3/2}}\le ||u||_{L^{\infty}(B_{1/8}(z_0))}^{\bar{p}_{+}^{3/2}-
\bar{p}_{-}^{3/2}} \Big(\frac{1}{\rho}\Big)^{\bar{p}_{+}^{3/2}-\bar{p}_{-}^{3/2}},
\end{equation}
and
\begin{equation}\label{bound-norm-bu-2-a}
{\bar{p}_{+}^{3/2}-\bar{p}_{-}^{3/2}}\le 3 {||\nabla \bar{p}||}_{L^{\infty}(B_{3/2})} \le  3\rho{||\nabla {p}||}_{L^{\infty}(B_{1/8}(z_0))}.
\end{equation}
Then, from \eqref{bound-norm-bu-1-a} and \eqref{bound-norm-bu-2-a},  we conclude that
\begin{equation*}
||\bar{u}||_{L^{\infty}(B_{3/2})}^{\bar{p}_{+}^{3/2}-\bar{p}_{-}^{3/2}}\le 
C=C\big(||u||_{L^{\infty}(B_{1/8}(z_0))}, {||\nabla {p}||}_{L^{\infty}(B_{1/8}(z_0))}\big).
\end{equation*}

It follows that in order to apply the first part of Proposition \ref{lemma7-1} to $\bar{u}$ we can take the constants $\tilde{\ep}$ and $C_0$ in that proposition depending only on
 $n$, $p_{\min}$, $p_{\max}$,  $||u||_{L^{\infty}(B_{1/8}(z_0))}$, ${||\nabla {p}||}_{L^{\infty}(B_{1/8}(z_0))}$ and $||g||_{L^{\infty}(B_{1/8}(z_0))}$.

Then, if $\rho$ is small enough, there holds in $B_2$
\begin{equation*}
\begin{aligned}
|\bar{f}(x)|&\le ||f||_{L^{\infty}(B_{1/8}(z_0))} \,\rho\le \tilde{\ep},\\
|\bar{g}(x)-g(z_0)|&=|g(z_0+\rho x)-g(z_0)|\le 2 [g]_{C^{0,\beta}(B_{1/8}(z_0))}\, {\rho}^{\beta}\le \tilde{\ep},\\
|\nabla \bar{p}(x)|&\le ||\nabla p||_{L^{\infty}(B_{1/8}(z_0))}\,\rho\le \tilde{\ep},\\
|\bar{p}(x)-p(z_0)|&=|p(z_0+\rho x)-p(z_0)|\le 2 ||\nabla p||_{L^{\infty}(B_{1/8}(z_0))}\,\rho\le \tilde{\ep}.
\end{aligned}
\end{equation*}
Hence, if $\rho\le \rho_0$, $\rho_0$ depending only on $\tilde{\ep}$, $||f||_{L^{\infty}(B_{1/8}(z_0))}$, $[g]_{C^{0,\beta}(B_{1/8}(z_0))}$, 
$\beta$ and $||\nabla p||_{L^{\infty}(B_{1/8}(z_0))}$, 
then $\bar{u}$ satisfies
$$ \bar{u}(x)\leq C_0 \mbox{dist}(x,F(\bar{u})),\quad x\in B^+_{1/2}(\bar{u}).$$

\medskip
{\it Step II}. We deduce from the previous step that for every $z_0\in B_{5/8}\cap F(u)$  there holds 
\begin{equation}\label{bound-dist}
{u}(x)\leq C_0 \mbox{dist}(x,F({u})), \quad  x\in B_{\rho_1}(z_0)\cap\{u>0\},
\end{equation}
for $C_0>0$ and $0<\rho_1<\frac{1}{32}$ constants depending only on 
 $n$, $p_{\min}$, $p_{\max}$,  $||u||_{L^{\infty}(B_{3/4})}$, ${||\nabla {p}||}_{L^{\infty}(B_{3/4})}$,
$||f||_{L^{\infty}(B_{3/4})}$, $||g||_{C^{0,\beta}(\overline{B_{3/4}})}$ and $\beta$ (here we have used that $B_{1/8}(z_0)\subset B_{3/4}$ for every $z_0\in B_{5/8}\cap F(u)$).

\medskip
{\it Step III}. Let $x_0\in B^+_{1/2}({u})$ such that $\mbox{dist}(x_0,F({u}))\le {\rho_1/2}.$ We will show that 
\begin{equation}\label{bound-fb}
|\nabla u(x_0)|\le C_1,
\end{equation}
for $C_1>0$ universal.

In fact, we denote $d_0=\mbox{dist}(x_0,F({u}))$ and we define $\tilde{u}(x)=\frac{1}{d_0}{u(x_0+d_0 x)}$. Then, since
$B_{d_0}(x_0)\subset\{u>0\}$, 
$$\Delta_{\tilde{p}(x)} \tilde{u} = \tilde{f} \text{ in } B_{1},$$
with  $\tilde{f}(x) = d_0 f(x_0+d_0 x)$ and $\tilde{p}(x) =  p(x_0+d_0 x)$ and therefore,
\begin{equation}\label{bound-tilde1}
{||\nabla {\tilde{p}}||}_{L^{\infty}(B_{1})}\le {||\nabla {p}||}_{L^{\infty}(B_{d_0}(x_0))}, \qquad 
{||{\tilde{f}}||}_{L^{\infty}(B_{1})}\le {|| {f}||}_{L^{\infty}(B_{d_0}(x_0))}.
\end{equation}
Since $d_0=\mbox{dist}(x_0,F({u}))$, there exists $z_0\in F(u)$ such that $|x_0-z_0|=d_0$ and recalling that $d_0<1/8$ we see that 
$z_0\in B_{5/8}\cap F(u)$. 

Also $B_{d_0}(x_0)\subset B_{2d_0}(z_0)\subset B_{\rho_1}(z_0)$. Then \eqref{bound-dist} yields
 \begin{equation*}
{u}(x)\leq C_0 \mbox{dist}(x,F({u})) \quad \text{ in } B_{d_0}(x_0).
\end{equation*}

Moreover, if $x\in B_{d_0}(x_0)$, 
$$\mbox{dist}(x,F({u}))\le |x-z_0|< 2 d_0$$
and then,
 \begin{equation*}
{u}(x)\leq C_0 \mbox{dist}(x,F({u}))\le C_0 2d_0\quad \text{ in } B_{d_0}(x_0)
\end{equation*}
which implies
\begin{equation}\label{bound-tilde2}
{||{\tilde{u}}||}_{L^{\infty}(B_{1})}= \frac{1}{d_0}{||{u}||}_{L^{\infty}(B_{d_0}(x_0))}\le 2C_0.
\end{equation} 
Hence, from Theorem 1.1 in \cite{Fan} we deduce  that $\tilde{u}\in C^{1,\alpha}(\overline{B_{1/2}})$ and $||\nabla \tilde{u}||_{L^{\infty}(B_{1/2})}\le C_1$. Taking into account \eqref{bound-tilde1} and \eqref{bound-tilde2}, we obtain that the constant
 $C_1>0$  can be taken depending  only on  $n$, $p_{\min}$, $p_{\max}$,   ${||\nabla {p}||}_{L^{\infty}(B_{3/4})}$, $||f||_{L^{\infty}(B_{3/4})}$ and $C_0$. 
It follows that
$$|\nabla u(x_0)|=|\nabla \tilde{u}(0)|\le C_1,
$$
which proves \eqref{bound-fb}.

\medskip
{\it Step IV}. Let $x_0\in B^+_{1/2}({u})$ such that $\mbox{dist}(x_0,F({u}))> {\rho_1/2}.$ We will show that 
\begin{equation}\label{bound-not-fb}
|\nabla u(x_0)|\le C_2,
\end{equation}
for $C_2>0$ universal.

In fact, there holds that $B_{\rho_1/2}(x_0)\subset\{u>0\}$ and then, 
$$\Delta_{p(x)} u = f \text{ in } B_{\rho_1/2}(x_0).$$
Now Theorem 1.1 in \cite{Fan} implies that $u\in C^{1,\alpha}(\overline{B_{\rho_1/4}(x_0)})$ and $||\nabla u||_{L^{\infty}(B_{\rho_1/4}(x_0))}\le C_2,$
where $C_2>0$ is a constant that can be taken depending only on  $n$, $p_{\min}$, $p_{\max}$,  $||u||_{L^{\infty}(B_{3/4})}$, ${||\nabla {p}||}_{L^{\infty}(B_{3/4})}$, $||f||_{L^{\infty}(B_{3/4})}$ and $\rho_1$. This proves \eqref{bound-not-fb} and completes the proof.
\end{proof}

\section{Asymptotic expansions}\label{asymptoticI}
In this section we revisit some  lemmas  that are well known in the linear setting (see  \cite{CS} and the Appendix in \cite{C2}), for the case of $p_0$-harmonic functions  (i.e., $\Delta_{p_0} u=0$, $p_0\in (1,\infty)$). Our results ---that are used in 
Theorem \ref{Lipmain} and Section \ref{conseq}---   concern the existence of  first order expansions at one side regular boundary points of positive Lipschitz $p_0$-harmonic functions, vanishing at the boundary of a domain. The proof can be applied to a general class of fully nonlinear degenerate elliptic operators (see Remark \ref{extens-asympt}).

For the notion of solution we refer to Definition \ref{defnweak} and Remark \ref{equiv-not}. Our result is the following
\begin{lem}\label{lemma51} Let $1<p_0<\infty$ and let $u$ be a positive Lipschitz $p_0$-harmonic function in a domain $\Omega\subset \mathbb{R}^n.$  Let $x_0\in\partial \Omega$ and assume that $u$ vanishes continuously on $\partial \Omega\cap B_\rho(x_0),$ for some $\rho>0.$
\begin{itemize}
\item[(a)] If there exists $B_r(y)\subset \Omega$ such that $x_0\in\partial B_r(y)$, then
\begin{equation*}
u(x)= \alpha \langle x-x_0,\nu\rangle^+ +o(|x-x_0|),
\end{equation*}
 in the ball $B_r(y),$ with $\alpha>0$ and $\nu=\frac{y-x_0}{|y-x_0|}.$
\item[(b)] If there exists a ball $B_r(y)\subset \Omega^c$ such that $x_0\in\partial B_r(y)$, then
\begin{equation*}
u(x)= \beta \langle x-x_0,\nu\rangle^+ +o(|x-x_0|),
\end{equation*}
 with $\beta\geq 0$ and  $\nu=\frac{x_0-y}{|x_0-y|}$.  
In addition, if $\beta>0,$ then $B_r(y)$ is tangent to $\partial \Omega$ at $x_0.$
\end{itemize}
\end{lem}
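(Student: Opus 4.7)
The plan is, after translating so that $x_0=0$, to combine a blow-up argument with a Liouville-type classification for $p_0$-harmonic functions in a half-space. Because $u$ is Lipschitz and vanishes at $0$, the rescalings $u_k(x):=u(r_kx)/r_k$ with $r_k\downarrow 0$ form a uniformly Lipschitz family on any compact set, so by Arzel\`a--Ascoli a subsequence converges locally uniformly to a nonnegative Lipschitz limit $u_\infty$, which I extend by zero off the Hausdorff limit of $(\Omega-x_0)/r_k$. In case (a) the interior ball condition forces this Hausdorff limit to contain the open half-space $H:=\{\langle x,\nu\rangle>0\}$; in case (b) the exterior ball forces it to be contained in $\overline{H}$. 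Using stability of viscosity sub/supersolutions under uniform convergence, together with the weak $\Leftrightarrow$ viscosity equivalence (Theorem \ref{weak-is-visc} and Remark \ref{equiv-not}), $u_\infty$ is a nonnegative viscosity $p_0$-harmonic function on $H$ (for (a)) or on $\{u_\infty>0\}\subset\overline{H}$ (for (b)) that vanishes on $\partial H$.

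Next I would invoke the half-space Liouville theorem for the $p_0$-Laplacian (Appendix \ref{app-liouv}): every nonnegative Lipschitz $p_0$-harmonic function in $H$ vanishing on $\partial H$ has the form $u_\infty(x)=\gamma\langle x,\nu\rangle^+$ for some $\gamma\geq 0$. This classifies the leading-order term along every subsequential blow-up.

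The positivity of $\alpha$ in (a) and the independence of $\gamma$ from the blow-up subsequence I would establish by sandwiching $u$ between two explicit radial $p_0$-harmonic barriers. For the Hopf-type lower bound in (a), I take the radial $p_0$-harmonic solution on the annulus $B_r(y)\setminus\overline{B_{r/2}(y)}$ that vanishes on the outer sphere and equals $\inf_{\partial B_{r/2}(y)}u$ (strictly positive by Harnack applied to $u$ in $\Omega$) on the inner sphere; radial $p_0$-harmonic functions satisfy an explicit ODE whose leading behavior at the outer sphere is linear, and comparison yields $u(x)\geq c(r-|x-y|)$, hence a positive linear lower bound along $\nu$. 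A matching upper radial barrier (outside $B_{r/4}(y)$ in (a), and inside an auxiliary tangent ball with the radial $p_0$-harmonic behavior for (b)) provides an upper bound of the same linear form. This sandwich both forces $\gamma>0$ in (a) and identifies $\gamma$ as $\lim_{t\downarrow 0}u(t\nu)/t$, independent of the subsequence; the expansion then follows from uniform convergence of $u_k$ on compact subsets of $\overline{H}$, with the Lipschitz bound and the upper barrier controlling the nearly-tangential directions where $\langle y-x_0,\nu\rangle=o(|y-x_0|)$. For the tangency claim in (b) with $\beta>0$, the form $u_\infty(x)=\beta\langle x,\nu\rangle^+$ implies that $\{u_\infty=0\}$ is exactly the closed half-space $\{\langle x,\nu\rangle\leq 0\}$; since the Hausdorff limit of $(\Omega^c-x_0)/r_k$ is contained in $\{u_\infty=0\}$ and already contains $\{\langle x,\nu\rangle\leq 0\}$ by the exterior ball, the two sets coincide, which is precisely tangency of $B_r(y)$ to $\partial\Omega$ at $x_0$.

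The main obstacle is the uniqueness of $\gamma$ across subsequences: in the linear case this is immediate from the mean-value property or the classical boundary Harnack, but for the $p_0$-Laplacian one must either match the leading coefficients of the two radial barriers (using the explicit ODE satisfied by radial $p_0$-harmonic functions and its regularity at a simple zero) or, alternatively, invoke the boundary Harnack principle for $p_0$-harmonic functions proved in \cite{LN1}. A secondary technical point is transferring the $p_0$-harmonic property to the blow-up limit, which relies on the stability of viscosity solutions under uniform convergence together with the fact that the rescaled domain exhausts $H$ (in (a)) or is exhausted by $\overline{H}$ (in (b)).
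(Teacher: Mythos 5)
Your overall plan (blow-up, Liouville classification of half-space $p_0$-harmonic functions, barrier sandwich for positivity and tangency) is a genuinely different route from the paper, which instead follows a Caffarelli--Caffarelli-Salsa-type iteration: it compares $u$ with an explicit Pucci-extremal barrier $h$ on an annulus (after extending $u$ by zero and using Lemma 6 of \cite{IS} to pass from $\Delta_{p_0}u=0$ to $\mathcal M^-_{\lambda_0,\Lambda_0}(D^2u)\le 0$), defines the dyadic coefficients $\alpha_k=\sup\{m:u\ge m\,h\text{ in }B_{2^{-k}}\cap B_r(y)\}$, and proves by contradiction, using the straightening diffeomorphism and the boundary estimates of Silvestre--Sirakov \cite{SS}, that the $\limsup$ of $(u-\tilde\alpha h)/|x|$ is nonpositive. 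That machinery is precisely what furnishes the \emph{existence} of the first-order limit, which is the crux of the lemma.

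Your proposal has a real gap on exactly this point. Subsequential blow-ups plus Liouville only give that every subsequential limit is $\gamma\langle x,\nu\rangle^+$ for some $\gamma$ possibly depending on the subsequence; to turn that into the asymptotic expansion you must show the coefficient is unique. Your first suggested remedy, matching the leading coefficients of a lower and an upper radial $p_0$-harmonic barrier, does not do this: those barriers live on different annuli and will have different leading slopes, so the sandwich only yields $c_-\le\gamma\le c_+$ for the subsequential limits, not equality. Your second suggested remedy, invoking boundary Harnack from \cite{LN1}, could in principle close the gap, but you leave it as an alternative rather than carrying it out, and the paper deliberately avoids importing that machinery here (Lemma 5.1 is proved in a way that, per Remark~\ref{extens-asympt}, also applies to a general class of degenerate fully nonlinear operators for which a boundary Harnack principle is not available off the shelf).

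There is a second issue specific to case (b). After extending $u$ by zero, $u$ is only a $p_0$-subsolution in a ball around $x_0$, not a solution, and the blow-up limit $u_\infty$ is $p_0$-harmonic only on its positivity set $\{u_\infty>0\}$, which you only know is contained in $\overline H$. The Liouville Lemma~\ref{lemma7.3} requires a solution on the entire half-space $H$ vanishing on $\partial H$, so it does not apply when $\{u_\infty>0\}$ may be a proper subset of $H$. This is exactly why the paper's case (b) runs the $\beta_k$ iteration with a Pucci $\mathcal M^+$ barrier on a larger annulus rather than appealing to a half-space rigidity result. Unless you supply a separate argument showing that $\{u_\infty>0\}$ is either empty or the full half-space, the classification step fails in case (b).
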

\begin{proof} We  will assume, without loss of generality, that $x_0=0$, $\nu=e_n$ and $\rho>1$. 
We will let $\lambda_0:=\min\{1,p_{0}-1\}$ and $\Lambda_0:=\max\{1,p_{0}-1\}$.

We define
\begin{equation}\label{extend}
\tilde{u}=\left\{
\begin{array}{l}
u\quad x\in \bar{\Omega}\cap \overline{B_1},\\
0\quad x\in   \bar{\Omega}^c\cap \overline{B_1}. \\
\end{array}
\right.
\end{equation}
Hence $\tilde{u}$ is Lipschitz in $\overline{B_1}$. To simplify the  notation we will denote $\tilde{u}$ as $u.$

\smallskip

{\bf Case (a).} Let $h$ be the solution of 
\begin{equation*}
\left\{
\begin{array}{l}
\mathcal{M}_{\lambda_0, \Lambda_0}^- (D^2h)=0,\quad B_{r}(y)\setminus \overline{B_{r/2}}(y)\\
h=0,\quad\mbox{on}\:\: \partial B_r(y)\\
h=\min_{\overline{B_{r/2}}(y)}{u},\quad\mbox{on}\:\: \partial B_{r/2}(y).
\end{array}
\right.
\end{equation*}
Let $h\equiv \min_{\overline{B_{r/2}}(y)}u$ in $B_{r/2}(y)$ and $h\equiv 0$ in $B^c_r(y)$.
Then, $h\ge 0$, $h\in C^{2}(\overline{B_{r}(y)\setminus B_{r/2}(y)})$, see \cite{CC}, and 
\begin{equation}\label{devel-h-a}
h(x)=cx_n^++o(|x|),\quad c>0.
\end{equation}

 In addition, recalling \eqref{p(x)-vs-pucci}, we have in $B_r(y)$, in the viscosity sense,
\begin{equation*}
\begin{split}
0= \Delta_{p_0} {u}(x)&=|\nabla{u}(x)|^{p_0-2}\left(\Delta {u}+(p_0-2)\langle D^2{u}(x)\frac{\nabla {u}(x)}{|\nabla {u}(x)|},\frac{\nabla {u}(x)}{|\nabla {u}(x)|}\rangle\right)\\
&\geq |\nabla{u}(x)|^{p_0-2}\mathcal{M}^-_{\lambda_0,\Lambda_0}(D^2u(x)).
\end{split}
\end{equation*}
Hence, applying Lemma 6 in \cite{IS}, we conclude that $\mathcal{M}^-_{\lambda_0,\Lambda_0}(D^2u(x))\leq 0$, in the viscosity sense, in $B_r(y).$
 Since $u\ge0$ in $B_r(y)$,  we deduce that
$u\geq h$ in $B_{r}(y).$
We define now
$$
\alpha_0=\sup\{m:\quad  {u}(x)\geq mh(x) ,\quad  B_1\cap B_r(y)\}
$$
and for  $k\in \mathbb{N}$
$$
\alpha_k=\sup\{m:\quad {u}(x)\geq mh(x),\quad B_{2^{-k}}\cap B_r(y) \}.
$$
In particular these sets are well defined and not empty, since $m=1$ belongs to all of them. 
The sequence $\{\alpha_k\}_{k\in \mathbb{N}}$ is increasing and bounded because $u$ is Lipschitz. Let
$$
\tilde{\alpha}=\lim_{k\to \infty}\alpha_k.
$$
{}From the definition of $\tilde{\alpha}$ there holds that $\tilde{\alpha}>0$ and
\begin{equation}\label{inf-alpha}
\liminf_{x\to 0,\:\:x\in B_r(y)} \frac{u(x)-\tilde{\alpha}h(x)}{|x|}\ge 0.
\end{equation}
Let us show that
\begin{equation}\label{sup-alpha}
\limsup_{x\to 0,\:\:x\in B_r(y)} \frac{u(x)-\tilde{\alpha}h(x)}{|x|}\le 0.
\end{equation}
Then, \eqref{devel-h-a}, \eqref{inf-alpha} and \eqref{sup-alpha} will give the desired result.

We argue by contradiction assuming that
$$
\limsup_{x\to 0,\:\:x\in B_r(y)} \frac{u(x)-\tilde{\alpha}h(x)}{|x|}=2\delta>0.
$$
Hence, there exists a sequence  $x^k\in B_r(y),$ $|x^{k}|=r_k\to 0$   such that for every $k$
\begin{equation*}
 \frac{u(x^{k})-\tilde{\alpha}h(x^{k})}{|x^{k}|}\geq \delta.
\end{equation*}

We define $r_k=|x^{k}|,$ $y^{k}=\frac{x^{k}}{r_k},$ so that $r_k\to 0$ and $|y^{k}|=1.$ Moreover, we denote
$$
u_k(x):=\frac{u(r_kx)}{r_k},\quad h_k(x):=\frac{h(r_kx)}{r_k}.
$$
Since $u$ and $h$ are Lipschitz in $\overline B_1$ and $u(0)=h(0)=0$ then, there exists $v$ Lipschitz continuous  in $\mathbb{R}^n$ such that, for a subsequence,
$$
{u}_k-\tilde{\alpha}h_k\to v
$$
uniformly on compact sets and such  that $y^{k}\to y^0,$ $ |y^{0}|=1,$ $y^{0}_n\geq 0.$ Since
$$
{u}_k(y^{k})-\tilde{\alpha}h_k(y^{k})\geq \delta,
$$
as a consequence  $v(y^{0})\geq \delta.$ Then there exists $z^0$ with $|z^0|=1$,  $z^0_n>0$ and $\overline{B_\varepsilon(z^0)}\subset \{x_n>0\}$ such that 
$$
v(x)\geq \frac{\delta}{2}\qquad \text{ in } B_\varepsilon(z^0)
$$
and 
\begin{equation}\label{bound-z0-a}
\begin{split}
 {u}_{k}(x)-\tilde{\alpha}h_k(x)\geq \frac{\delta}{2}\qquad \text{ in } B_\varepsilon(z^0).
\end{split}
\end{equation}

We know that in $B_r(y)\cap B_{2^{-k}}$
$$
u(x)\geq \alpha_k h(x),
$$
and $r_k\to 0.$ We take a sequence $j_k\to +\infty$ such that $r_k<2^{-j_k}$
 and then
$$
{u}(x)\geq \alpha_{j_k}h(x)\quad \text{ in }B_r(y)\cap B_{2^{-{j_k}}}.
$$
Hence
$$
{u}(x)\geq \alpha_{j_k}h(x)\quad\text{ if }|x|<r_k, \  x\in B_r(y).
$$
 As a consequence, 
$$
{u}_k(x)\geq \alpha_{j_k}h_k(x)\quad \text{ if }|x|<1, \ |x-\frac{y}{r_k}|<\frac{r}{r_k},
$$
and, recalling \eqref{bound-z0-a}, we have ${u}_k(x)- \alpha_{j_k}h_k(x)\geq \frac{\delta}{2}$ in $B_\varepsilon(z^0).$
We also observe that 
\begin{equation}\label{ccrefr-a}
\begin{aligned}
\mathcal{M}_{\lambda_0,\Lambda_0}^-(D^2h_k)&=0\qquad\mbox{ in }B_{\frac{r}{r_k}}(\frac{y}{r_k})\cap \overline{B}_{\frac{r}{2r_k}}^c(\frac{y}{r_k}),\\ 
\mathcal{M}_{\lambda_0,\Lambda_0}^-(D^2{u}_k)&\leq 0\qquad\mbox{ in }B_1\cap  {B}_{\frac{r}{r_k}}(\frac{y}{r_k}).
\end{aligned}
\end{equation}
Hence, in the viscosity sense, if $k$ is large,
$$
\mathcal{M}_{\lambda_0,\Lambda_0}^-(D^2({u}_k-\alpha_{j_k}h_k))\leq 0\quad\mbox{in}\:\:\:B_1\cap  {B}_{\frac{r}{r_k}}(\frac{y}{r_k}).
$$
In fact, since $\alpha_{j_k}h_k\in C^2,$ we get from \eqref{ccrefr-a}, reasoning as in  Proposition 2.13 in \cite{CC},  
$$
\mathcal{M}_{\lambda_0,\Lambda_0}^-(D^2({u}_k-\alpha_{j_k}h_k))\leq -\mathcal{M}_{\lambda_0,\Lambda_0}^-(D^2(\alpha_{j_k}h_k))=0
$$
in $B_1\cap {B}_{\frac{r}{r_k}}(\frac{y}{r_k})$. 
We now consider, for large $k$, $w_k$ satisfying
\begin{equation*}
\begin{split}
\left\{\begin{array}{l}
\mathcal{M}_{\lambda_0,\Lambda_0}^-(D^2 w_k)=0\quad \text{ in }D_k:=B_1\cap {B}_{\frac{r}{r_k}}(\frac{y}{r_k}),\\
w_k=\frac{\delta}{4}\varphi\quad \text{ on } B_{\varepsilon}(z^0)\cap \partial D_k,\\
w_k=0\quad \text{ on } \partial D_k\setminus B_\varepsilon (z^0),
\end{array}
\right.
\end{split}
\end{equation*}
\begin{equation*}
\varphi\in C_0^{\infty}(B_{\varepsilon} (z^0)), \quad 0\le \varphi\le 1, \quad \varphi\equiv 1 \text{ in }B_{\varepsilon/2} (z^0).
\end{equation*}
 Then $w_k\in C(\overline{D_k})\cap C^{2}(\overline{{D}_k\cap B_{1/2}}),$ $w_k\geq 0$ in 
$\overline{D_k}$ and 
$$
{u}_k-\alpha_{j_k}h_k\geq w_k\quad \mbox{ in } \overline{D_k}.
$$

We claim that there exist $\mu>0$ and $\tilde{\rho}_0>0$ such that,  for large  $k$,
\begin{equation}\label{bound-mu-a}
\frac{w_k(x)}{h_k(x)}\geq \mu\quad \text{ in }B_{\tilde{\rho}_0}\cap {B}_{\frac{r}{r_k}}(\frac{y}{r_k}).
\end{equation}
In fact, we consider $\varphi_k$ a $C^2$ diffeomorphism which maps, for $\rho_0$ small, $B_{\rho_0}\cap {B}_{\frac{r}{r_k}}(\frac{y}{r_k})$ in $B_1^+:=B_1\cap\{x_n>0\}$, with $\varphi_k(B_{\rho_0}\cap \partial{B}_{\frac{r}{r_k}}(\frac{y}{r_k}))=B_1\cap\{x_n=0\}$ and 
$\varphi_k(0)=0$. We choose $\varphi_k$ with uniformly bounded $C^2$ norms. Then, we define
$$
\tilde{w}_k(x)=w_k(\varphi_k^{-1}(x)),\quad \tilde{h}_k(x)=h_{k}(\varphi_{k}^{-1}(x))\quad \text{ for }x\in B_{1}^+.
$$ 
We first observe that, for every $M,N\in\mathcal{S}^{n\times n}$, the following inequalities hold (see Lemma 2.10 in \cite{CC})
\begin{equation*}
\mathcal{M}_{\lambda_0,\Lambda_0}^+(M-N)\geq\mathcal{M}_{\lambda_0,\Lambda_0}^-(M)-\mathcal{M}_{\lambda_0,\Lambda_0}^-(N)\geq \mathcal{M}_{\lambda_0,\Lambda_0}^-(M-N).
\end{equation*}

Then, we can apply Proposition 2.1 in \cite{SS} with $F(M):=\mathcal{M}_{\lambda_0,\Lambda_0}^-(M)$ and we obtain that
\begin{equation*}
\tilde{F}_k(D^2\tilde{w}_k(x),D\tilde{w}_k(x),x)=0\quad\mbox{in}\:\:\:B_1^+,
\end{equation*}
where,  for  $M\in\mathcal{S}^{n\times n}$,  $q\in \mathbb{R}^n$ and $x\in B_1^+$,
\begin{equation*}
\tilde{F}_k(M,q,x):=\mathcal{M}_{\lambda_0,\Lambda_0}^-(D\varphi^T_k(\varphi^{-1}_k(x))MD\varphi_k(\varphi^{-1}_k(x))+qD^2\varphi_k(\varphi^{-1}_k(x))),
\end{equation*}
with $\tilde{F}_k$ satisfying for every $M,N\in\mathcal{S}^{n\times n}$,  $p,q\in \mathbb{R}^n$ and $x\in B_1^+$,
\begin{equation*}
\begin{split}
\mathcal{M}_{\lambda_0,\Lambda_0}^+(M-N)+K|p-q|&\geq\tilde{F}_k(M,p,x)-\tilde{F}_k(N,q,x)\\
&\geq \mathcal{M}_{\lambda_0,\Lambda_0}^-(M-N)-K|p-q|.
\end{split}
\end{equation*}
Here $K$ is a fixed constant depending only on the uniform bound of the $C^2$ norms of $\varphi_k.$

As a consequence, $\tilde{w}_k$  satisfy   in the viscosity sense, the following set of inequalities
\begin{equation}\label{condiSS-a}
\begin{split}
\left\{\begin{array}{l}
 \mathcal{M}_{\lambda_0,\Lambda_0}^+(D^2\tilde{w}_k)+K|\nabla\tilde{w}_k|\geq 0\quad \text{ in }B_1^+,\\
\mathcal{M}_{\lambda_0,\Lambda_0}^-(D^2\tilde{w}_k)-K|\nabla\tilde{w}_k|\leq 0\quad \text{ in }B_1^+.
\end{array}
\right.
\end{split}
\end{equation}

 With similar arguments we  obtain that 
$\tilde{h}_k$ satisfy in the viscosity sense  the inequalities in \eqref{condiSS-a} in $B_1^+$,  as well.

We also notice that, since $h_k(x)\to cx_n^+$ uniformly on compact sets of $\mathbb{R}^n,$ with $c>0,$ then, 
$\tilde{h}_k(\frac{1}{2}e_n)=h_k(\varphi_k^{-1}(\frac{1}{2}e_n))\to \tilde{c}>0$.

Hence, we can apply   Proposition 2.4 of \cite{SS} and we get
\begin{equation}\label{bound-hk-a}
\tilde{h}_k(x)\leq C\tilde{h}_k(\frac{1}{2}e_n)x_n\leq C_0x_n \quad\text{ in }B_{1/2}^+, 
\end{equation}
for a positive constant $C_0$ and large $k.$ 

On the other hand,  for $k_1$ large and fixed, there holds 
 $w_k\geq w_{k_1}$ in $D_{k_1}$, for $k\ge k_1$. We remark that $w_{k_1}>0$ in $D_{k_1}.$ Thus, for any $0<r_0<1$, 
\begin{equation}\label{finalc1-a}
\tilde{w}_k(\frac{r_0}{2}e_n)=w_k(\varphi^{-1}_k(\frac{r_0}{2}e_n))\geq w_{k_1}(\varphi^{-1}_k(\frac{r_0}{2}e_n))\to \tilde{c}_{r_0}>0.
\end{equation}
Now the application of Proposition 2.5 in \cite{SS} to $\tilde{w}_k^{r_0}(x):=\tilde{w}_k(r_0x)$, for $r_0>0$ universal and small, gives 
\begin{equation*}\label{finalc0}
\tilde{w}_k(x)\geq c_0\tilde{w}_k(\frac{r_0}{2}e_n)x_n\quad \mbox{ in }B^+_{\frac{r_0}{2}},
\end{equation*}
for a positive constant $c_0$.
 Hence, using \eqref{finalc1-a} with this choice of $r_0$,  we get 
\begin{equation}\label{finalc2-a}
\tilde{w}_k(x)\geq c_1 x_n\quad \text{ in }B^+_{\frac{r_0}{2}},
\end{equation}
for $c_1$ a positive constant and large $k$.
Thus, from \eqref{bound-hk-a} and \eqref{finalc2-a}, we obtain
$$
\frac{\tilde{w}_k(x)}{\tilde{h}_k(x)}\geq \frac{c_1}{C_0}:=\mu\quad \text{ in }B_{\rho_1}^+,
$$
 for $\rho_1>0$ small and large $k.$ Now, going back to the original variables, we conclude that
$$
\frac{w_k(x)}{h_k(x)}\geq \mu \quad \text{ in }B_{\tilde{\rho}_0}\cap {B}_{\frac{r}{r_k}}(\frac{y}{r_k}),
$$
for some constants $\tilde{\rho}_0>0$ and $\mu>0,$ and large $k.$ That is, \eqref{bound-mu-a} holds.

Finally, since 
$$
u_k -\alpha_{j_k}h_k\geq w_k \quad \text{ in   }B_{1}\cap {B}_{\frac{r}{r_k}}(\frac{y}{r_k}),
$$
 we get
$$
u_k -\alpha_{j_k}h_k\geq w_k=\frac{w_k}{h_k}h_k\geq \mu h_k\quad\text{ in }B_{\tilde{\rho}_0}\cap {B}_{\frac{r}{r_k}}(\frac{y}{r_k}).
$$
 As a consequence,
$$
u_k - (\alpha_{j_k}+\mu)h_k\geq 0\quad \text{ in }B_{\tilde{\rho}_0}\cap {B}_{\frac{r}{r_k}}(\frac{y}{r_k}).
$$
 Then, in the original variables,  we have
$$
u(r_kx)-(\alpha_{j_k}+\mu)h(r_kx)\geq 0\quad \text{ when }|x|\leq \tilde{\rho}_0, \ |x-\frac{y}{r_k}|<\frac{r}{r_k},
$$
or, equivalently, when
$|r_kx|\leq r_k \tilde{\rho}_0,$ $|r_kx-y|<r$.
Since $\alpha_{j_k}+\mu\to\tilde{\alpha}+\mu$, there holds that 
   $\alpha_{j_k}+\mu\geq \tilde{\alpha}+{\mu/2},$  if $k$ is large enough. Hence, 
$$
u - (\tilde{\alpha}+\mu/2)h\geq 0\quad \text{ in  }B_{r_{k_0}\tilde{\rho}_0}\cap B_{r}(y),
$$
 for some suitable $k_0$. As a consequence, if $2^{-k}\leq r_{k_0}\tilde{\rho}_0$,
$$
 u - (\alpha_{k}+\mu/2)h\geq u - (\tilde{\alpha}+\mu/2)h\geq 0 \quad \text{ in  }B_{2^{-k}}\cap B_{r}(y),
$$
 but this contradicts the definition of $\alpha_k$ and completes the proof.

\medskip

{\bf Case (b).} Recalling \eqref{extend}, we have that $\tilde{u}$ is Lipschitz in $\overline{B_1}$,  satisfies $\Delta_{p_0}\tilde{u}\geq 0$ in $B_1$ in the sense of Definition 2.2 in \cite{JLM} and, by Theorem 2.5 of that paper, in the viscosity sense. We again denote $\tilde u$ as $u$. 
Without loss of generality we may suppose that $B_{2r}(y)\subset B_1.$

Let $h$ be the solution of 
\begin{equation*}
\left\{
\begin{array}{l}
\mathcal{M}_{\lambda_0, \Lambda_0}^+ (D^2h)=0,\quad B_{2r}(y)\setminus \overline{B_r}(y)\\
h=0,\quad\mbox{on}\:\: \partial B_r(y)\\
h=\max_{\partial B_{2r}(y)}{u},\quad\mbox{on}\:\: \partial B_{2r}(y),
\end{array}
\right.
\end{equation*}
and define $h\equiv 0$ in $B_r(y)$. 
Then, $h\ge 0$, $h\in C^{2}(\overline{B_{2r}(y)\setminus B_r(y)}),$ see \cite{CC}, and 
\begin{equation}\label{devel-h}
h(x)=cx_n^++o(|x|),\quad c>0.
\end{equation}
 In addition, recalling \eqref{p(x)-vs-pucci}, we have in $B_1$, in the viscosity sense,
\begin{equation*}
\begin{split}
0\leq \Delta_{p_0} {u}(x)&=|\nabla{u}(x)|^{p_0-2}\left(\Delta {u}+(p_0-2)\langle D^2{u}(x)\frac{\nabla {u}(x)}{|\nabla {u}(x)|},\frac{\nabla {u}(x)}{|\nabla {u}(x)|}\rangle\right)\\
&\leq |\nabla{u}(x)|^{p_0-2}\mathcal{M}^+_{\lambda_0,\Lambda_0}(D^2u(x)).
\end{split}
\end{equation*}
Hence, applying Lemma 6 in \cite{IS}, we conclude that $\mathcal{M}^+_{\lambda_0,\Lambda_0}(D^2u(x))\geq 0$, in the viscosity sense, in $B_1.$
 Since $u=0$ on $\partial B_r(y)$, then $u\leq h$ on $\partial (B_{2r}(y)\setminus \overline{B}_r(y))$, thus we deduce that
$u\leq h$ in $B_{2r}(y)\setminus \overline{B}_r(y).$
We define now
$$
\beta_0=\inf\{m:\quad  mh(x)\geq {u}(x),\quad  B_1\cap B_r^c(y)\}
$$
and for  $k\in \mathbb{N}$
$$
\beta_k=\inf\{m:\quad mh(x)\geq {u}(x),\quad B_{2^{-k}}\cap B_r^c(y) \}.
$$
In particular these sets are well defined and not empty, for $k\ge k_0$, since $m=1$ belongs to all of them. The sequence $\{\beta_k\}_{k\in \mathbb{N}}$ is monotone decreasing, so that
$$
\tilde{\beta}:=\inf_{k\in\mathbb{N}}\beta_k\geq 0,
$$
because $\beta_k\geq 0$ for  $k\in \mathbb{N}.$ There holds that
\begin{equation}\label{limsup-beta}
\limsup_{x\to 0,\:\:x\in B_r^c(y)}\frac{{u}(x)-\tilde{\beta}h(x)}{|x|}\leq 0.
\end{equation}

We will show that 
\begin{equation}\label{liminf-beta}
\liminf_{x\to 0,\:\:x\in B_r^c(y)}\frac{{u}(x)-\tilde{\beta}h(x)}{|x|}\geq 0.
\end{equation}
Then, \eqref{devel-h}, \eqref{limsup-beta} and \eqref{liminf-beta} will give the desired result.

We will proceed by contradiction. In fact, assume that there exists $\delta>0$ such that $$
\liminf_{x\to 0,\:\:x\in B_r^c(y)}\frac{{u}(x)-\tilde{\beta}h(x)}{|x|}=-2\delta.
$$
Then, there exists a sequence $\{x^{k}\}_{k\in\mathbb{N}}\subset B_r^{c}(y)$, $x^k\to 0$, such that 
$$
\frac{{u}(x^{k})-\tilde{\beta}h(x^{k})}{|x^{k}|}\leq -\delta.
$$
We define $r_k=|x^{k}|,$ $y^{k}=\frac{x^{k}}{r_k},$ so that $r_k\to 0$ and $|y^{k}|=1.$ Moreover, we denote
$$
u_k(x):=\frac{u(r_kx)}{r_k},\quad h_k(x):=\frac{h(r_kx)}{r_k}.
$$
Since $u$ is Lipschitz in $B_1$, $h\in C^{2}(\overline{B_{2r}(y)\setminus B_r(y)})$ and $u(0)=h(0)=0$ then, there exists $v$ Lipschitz continuous  in $\mathbb{R}^n$ such that, for a subsequence,
$$
{u}_k-\tilde{\beta}h_k\to v
$$
uniformly on compact sets and such  that $y^{k}\to y^0,$ $ |y^{0}|=1,$ $y^{0}_n\geq 0.$ Since
$$
{u}_k(y^{k})-\tilde{\beta}h_k(y^{k})\leq -\delta,
$$
as a consequence  $v(y^{0})\leq -\delta.$ Then there exists $z^0$ with $|z^0|=1$,  $z^0_n>0$ and $\overline{B_\varepsilon(z^0)}\subset \{x_n>0\}$ such that 
$$
v(x)\leq -\frac{\delta}{2}\qquad \text{ in } B_\varepsilon(z^0)
$$
and 
\begin{equation}\label{bound-z0}
\begin{split}
 {u}_{k}(x)-\tilde{\beta}h_k(x)\leq -\frac{\delta}{2}\qquad \text{ in } B_\varepsilon(z^0).
\end{split}
\end{equation}

We know that in $B_r^c(y)\cap B_{2^{-k}}$
$$
u(x)\leq \beta_k h(x),
$$
and $r_k\to 0.$ We take a sequence $j_k\to +\infty$ such that $r_k<2^{-j_k}$
 and then
$$
{u}(x)\leq \beta_{j_k}h(x)\quad \text{ in }B_r^c(y)\cap B_{2^{-{j_k}}}.
$$
Hence
$$
{u}(x)\leq \beta_{j_k}h(x)\quad\text{ if }|x|<r_k, \  x\in B_r^c(y).
$$
 As a consequence, 
$$
{u}_k(x)\leq \beta_{j_k}h_k(x)\quad \text{ if }|x|<1, \ |x-\frac{y}{r_k}|>\frac{r}{r_k},
$$
and, recalling \eqref{bound-z0}, we have ${u}_k(x)- \beta_{j_k}h_k(x)\leq -\frac{\delta}{2}$ in $B_\varepsilon(z^0).$
We also observe that 
\begin{equation}\label{ccrefr}
\begin{aligned}
\mathcal{M}_{\lambda_0,\Lambda_0}^+(D^2h_k)&=0\qquad\mbox{ in }B_{\frac{2r}{r_k}}(\frac{y}{r_k})\cap \overline{B}_{\frac{r}{r_k}}^c(\frac{y}{r_k}),\\ 
\mathcal{M}_{\lambda_0,\Lambda_0}^+(D^2{u}_k)&\geq 0\qquad\mbox{ in }B_1\cap  \overline{B}_{\frac{r}{r_k}}^c(\frac{y}{r_k}).
\end{aligned}
\end{equation}
Hence, in the viscosity sense,
$$
\mathcal{M}_{\lambda_0,\Lambda_0}^+(D^2({u}_k-\beta_{j_k}h_k))\geq 0\quad\mbox{in}\:\:\:B_1\cap  \overline{B}_{\frac{r}{r_k}}^c(\frac{y}{r_k}).
$$
In fact, since $\beta_{j_k}h_k\in C^2,$ we get from \eqref{ccrefr}, reasoning as in  Proposition 2.13 in \cite{CC},  
$$
\mathcal{M}_{\lambda_0,\Lambda_0}^+(D^2({u}_k-\beta_{j_k}h_k))\geq -\mathcal{M}_{\lambda_0,\Lambda_0}^+(D^2(\beta_{j_k}h_k))=0
$$
in $B_1\cap \overline{B}_{\frac{r}{r_k}}^c(\frac{y}{r_k})$. Thus, we deduce that 
$$
\mathcal{M}_{\lambda_0,\Lambda_0}^-(D^2(\beta_{j_k}h_k-{u}_k))\leq 0\quad \mbox{ in }B_1\cap \overline{B}_{\frac{r}{r_k}}^c(\frac{y}{r_k}).
$$                                
We now consider $w_k$ satisfying
\begin{equation*}
\begin{split}
\left\{\begin{array}{l}
\mathcal{M}_{\lambda_0,\Lambda_0}^-(D^2 w_k)=0\quad \text{ in }D_k:=B_1\cap \overline{B}_{\frac{r}{r_k}}^c(\frac{y}{r_k}),\\
w_k=\frac{\delta}{4}\varphi\quad \text{ on } B_{\varepsilon}(z^0)\cap \partial D_k,\\
w_k=0\quad \text{ on } \partial D_k\setminus B_\varepsilon (z^0),
\end{array}
\right.
\end{split}
\end{equation*}
\begin{equation}\label{bound-data}
\varphi\in C_0^{\infty}(B_{\varepsilon} (z^0)), \quad 0\le \varphi\le 1, \quad \varphi\equiv 1 \text{ in }B_{\varepsilon/2} (z^0).
\end{equation}
 Then $w_k\in C(\overline{D_k})\cap C^{2}(\overline{{D}_k\cap B_{1/2}}),$ $w_k\geq 0$ in 
$\overline{D_k}$ and 
$$
\beta_{j_k}h_k-{u}_k\geq w_k\quad \mbox{ in } \overline{D_k}.
$$

We claim that there exist $\mu>0$ and $\tilde{\rho}_0>0$ such that,  for large  $k$,
\begin{equation}\label{bound-mu}
\frac{w_k(x)}{h_k(x)}\geq \mu\quad \text{ in }B_{\tilde{\rho}_0}\cap \overline{B}_{\frac{r}{r_k}}^c(\frac{y}{r_k}).
\end{equation}
In fact, we consider $\varphi_k$ a $C^2$ diffeomorphism which maps, for $\rho_0$ small, $B_{\rho_0}\cap \overline{B}_{\frac{r}{r_k}}^c(\frac{y}{r_k})$ in $B_1^+:=B_1\cap\{x_n>0\}$, with $\varphi_k(B_{\rho_0}\cap \partial{B}_{\frac{r}{r_k}}(\frac{y}{r_k}))=B_1\cap\{x_n=0\}$
and 
$\varphi_k(0)=0$. We choose $\varphi_k$ with uniformly bounded $C^2$ norms. Then, we define
$$
\tilde{w}_k(x)=w_k(\varphi_k^{-1}(x)),\quad \tilde{h}_k(x)=h_{k}(\varphi_{k}^{-1}(x))\quad \text{ for }x\in B_{1}^+.
$$ 
Reasoning as in Case a), we get that $\tilde{w}_k$  satisfy   in the viscosity sense, the following set of inequalities
\begin{equation}\label{condiSS}
\begin{split}
\left\{\begin{array}{l}
 \mathcal{M}_{\lambda_0,\Lambda_0}^+(D^2\tilde{w}_k)+K|\nabla\tilde{w}_k|\geq 0\quad \text{ in }B_1^+,\\
\mathcal{M}_{\lambda_0,\Lambda_0}^-(D^2\tilde{w}_k)-K|\nabla\tilde{w}_k|\leq 0\quad \text{ in }B_1^+,
\end{array}
\right.
\end{split}
\end{equation}
where $K$ is a fixed constant depending only on the uniform bound of the $C^2$ norms of $\varphi_k.$ 
 With similar arguments we  obtain that 
$\tilde{h}_k$ satisfy in the viscosity sense  the inequalities in \eqref{condiSS} in $B_1^+$,  as well.

We also notice that, since $h_k(x)\to cx_n^+$ uniformly on compact sets of $\mathbb{R}^n,$ with $c>0,$ then, 
$\tilde{h}_k(\frac{1}{2}e_n)=h_k(\varphi_k^{-1}(\frac{1}{2}e_n))\to \tilde{c}>0$.

Hence, we can apply   Proposition 2.4 of \cite{SS} and we get
\begin{equation}\label{bound-hk}
\tilde{h}_k(x)\leq C\tilde{h}_k(\frac{1}{2}e_n)x_n\leq C_0x_n \quad\text{ in }B_{1/2}^+, 
\end{equation}
for a positive constant $C_0$ and large $k.$ 

On the other hand,  let $w_0$
satisfying
\begin{equation*}
\begin{split}
\left\{\begin{array}{l}
\mathcal{M}_{\lambda_0,\Lambda_0}^-(D^2 w_0)=0\quad \text{ in } B_1^+,\\
w_0=\frac{\delta}{4}\varphi\quad \text{ on }  B_{\varepsilon}(z^0)\cap \partial B_1^+,\\
w_0=0\quad  \text{ on }\partial B_1^+\setminus B_\varepsilon (z^0),
\end{array}
\right.
\end{split}
\end{equation*}
with $\varphi$ as in \eqref{bound-data}. 
Then $w_k\geq w_0$ in $B_1^+.$ We remark that $w_0>0$ in $B_1^+.$ Thus, for any $0<r_0<1$, 
\begin{equation}\label{finalc1}
\tilde{w}_k(\frac{r_0}{2}e_n)=w_k(\varphi^{-1}_k(\frac{r_0}{2}e_n))\geq w_0(\varphi^{-1}_k(\frac{r_0}{2}e_n))\to\tilde{c}_{r_0}>0.
\end{equation}
Now the application of Proposition 2.5 in \cite{SS} to $\tilde{w}_k^{r_0}(x):=\tilde{w}_k(r_0x)$, for $r_0>0$ universal and small, gives 
\begin{equation*}\label{finalc0}
\tilde{w}_k(x)\geq c_0\tilde{w}_k(\frac{r_0}{2}e_n)x_n\quad \mbox{ in }B^+_{\frac{r_0}{2}},
\end{equation*}
for a positive constant $c_0$.
 Hence, using    \eqref{finalc1} with this choice of $r_0$, we get 
\begin{equation}\label{finalc2}
\tilde{w}_k(x)\geq c_1 x_n\quad \text{ in }B^+_{\frac{r_0}{2}},
\end{equation}
for $c_1$ a positive constant and large $k$.
Thus, from \eqref{bound-hk} and \eqref{finalc2}, we obtain
$$
\frac{\tilde{w}_k(x)}{\tilde{h}_k(x)}\geq \frac{c_1}{C_0}:=\mu\quad \text{ in }B_{\rho_1}^+,
$$
 for $\rho_1>0$ small and large $k.$ Now, going back to the original variables, we conclude that
$$
\frac{w_k(x)}{h_k(x)}\geq \mu \quad \text{ in }B_{\tilde{\rho}_0}\cap \overline{B}_{\frac{r}{r_k}}^c(\frac{y}{r_k}),
$$
for some constants $\tilde{\rho}_0>0$ and $\mu>0,$ and large $k.$ That is, \eqref{bound-mu} holds.

Finally, since 
$$
\beta_{j_k}h_k-u_k\geq w_k \quad \text{ in   }B_{1}\cap \overline{B}_{\frac{r}{r_k}}^c(\frac{y}{r_k}),
$$
 we get
$$
\beta_{j_k}h_k-u_k\geq w_k=\frac{w_k}{h_k}h_k\geq \mu h_k\quad\text{ in }B_{\tilde{\rho}_0}\cap \overline{B}_{\frac{r}{r_k}}^c(\frac{y}{r_k}).
$$
 As a consequence,
$$
(\beta_{j_k}-\mu)h_k-u_k\geq 0\quad \text{ in }B_{\tilde{\rho}_0}\cap \overline{B}_{\frac{r}{r_k}}^c(\frac{y}{r_k}).
$$
 Then, in the original variables,  we have
$$
(\beta_{j_k}-\mu)h(r_kx)-u(r_kx)\geq 0\quad \text{ when }|x|\leq \tilde{\rho}_0, \ |x-\frac{y}{r_k}|>\frac{r}{r_k},
$$
or, equivalently, when
$|r_kx|\leq r_k \tilde{\rho}_0,$ $|r_kx-y|>r$.
Since $\beta_{j_k}-\mu\to\tilde{\beta}-\mu$, there holds that 
   $\beta_{j_k}-\mu\leq \tilde{\beta}-\frac{\mu}{2},$  if $k$ is large enough. Hence, 
$$
(\tilde{\beta}-\mu/2)h-u\geq 0\quad \text{ in  }B_{r_{k_0}\tilde{\rho}_0}\cap B_{r}^c(y),
$$
 for some suitable $k_0$. As a consequence, if $2^{-k}\leq r_{k_0}\tilde{\rho}_0$,
$$
(\beta_{k}-\mu/2)h-u\geq (\tilde{\beta}-\mu/2)h-u\geq 0 \quad \text{ in  }B_{2^{-k}}\cap B_{r}^c(y),
$$
 but this contradicts the definition of $\beta_k$ and completes the proof. 
\end{proof}

\smallskip
\begin{rem}\label{extens-asympt} Lemma \ref{lemma51} also holds if we replace in the statement the $p_0$-Laplace operator by
a general class of fully nonlinear degenerate elliptic operators. More precisely, we can consider $u$ a Lipschitz viscosity solution of an equation
of the form 
\begin{equation*}
F(D^2 u (x),D u(x),x)=0 \quad \text{ in } \Omega,
\end{equation*} 
with ${F}$ satisfying for every $M\in\mathcal{S}^{n\times n}$,  $q\in \mathbb{R}^n$ and $x\in \Omega$,
\begin{equation*}
|q|^{\sigma}\mathcal{M}_{\lambda,\Lambda}^-(M)\le{F}(M,q,x)\le
 |q|^{\sigma}\mathcal{M}_{\lambda,\Lambda}^+(M),
\end{equation*}
for some $0<\lambda\le\Lambda$ and $\sigma\in \R$, and the same proof applies.
\end{rem}

\section{Regularity of the free boundary}\label{section7}

In this section we prove our main result, namely, Theorem \ref{Lipmain}.

Since we will apply a result of \cite{LN1}, we include first the definition of viscosity solution employed in that paper in case of nonnegative solutions. These are solutions of problem \eqref{fb} with $p(x)\equiv p_0$, $f\equiv 0$ and $g\equiv 1$.

\begin{defn}[Definition 1.4 in \cite{LN1}]\label{def-LN} Let $D\subset\R^n$ be a domain, $u\in C(D)$ be nonnegative and $1<p_0<\infty$. $u$ is a viscosity (or weak) solution of
\begin{equation}  \label{fb-LN}
\left\{
\begin{array}{ll}
\Delta_{p_0} u = 0 & \hbox{in $D^+(u):= \{x \in D : u(x)>0\}$}, \\
\  &  \\
|\nabla u|= 1 & \hbox{on $F(u):= \partial D^+(u) \cap
D,$} 
\end{array}
\right.
\end{equation}
if there holds that $u$ is $p_0$-harmonic in $D^+(u)$, in the sense that $u\in W^{1,p_0}(D^+(u))$ and 
$$
\int_{D^+(u)} |\nabla u|^{p_0-2}\nabla u \cdot \nabla
\varphi\, dx =0 \quad \text{for every } \varphi \in W_0^{1,p_0}(D^+(u)),
$$
and the free boundary condition in \eqref{fb-LN} is satisfied in the following sense. Assume that $x_0\in F(u)$ and there exists a ball
$B_r(y)\subset D$, with $x_0\in \partial B_r(y)$. If $\nu=\frac{y-x_0}{|y-x_0|}$, then the following holds, as $x\to x_0$ non-tangentially, for $\alpha =1$,
\begin{itemize}
\item[(i)] if $B_r(y)\subset D^+(u)$, then $u(x)= \alpha \langle x-x_0,\nu\rangle^+ +o(|x-x_0|)$,

\item[(ii)] if $B_r(y)\subset D^+(u)^c$, then $u(x)= \alpha \langle x_0-x,\nu\rangle^++o(|x-x_0|)$.
\end{itemize}
\end{defn}

\medskip

We next extend the result of Lemma 6.2 in \cite{DFS1} to the global homogenous $p_0$-Laplacian free boundary problem (i.e., to problem \eqref{fb} in $\Omega=\R^n$ with $p(x)\equiv p_0$, $f\equiv 0$ and $g\equiv 1$). This result is valid for globally Lipschitz continuous functions. The notion of viscosity solution we employ in Lemma \ref{fbliminftylinlemma} is the one in \cite{LN1} (see Definition \ref{def-LN} above). 
\begin{lem}\label{fbliminftylinlemma}
Let $1<p_0<\infty$.  Let  $v:\mathbb{R}^n\to\mathbb{R}$ be  a nonnegative Lipschitz viscosity solution (in the sense of Definition \ref{def-LN}) to 
\begin{equation}  \label{fbliminftylin}
\left\{
\begin{array}{ll}
\Delta_{p_0} v = 0, & \hbox{in $\{v>0\}$}, \\
\  &  \\
|\nabla v|= 1, &\mbox{on}\quad F(v):=\partial\{v>0\}.
\end{array}
\right.
\end{equation}
Assume that 
$$
\{v>0\}=\{(x',x_n)\in\mathbb{R}^n:\quad x'\in\mathbb{R}^{n-1},\quad x_n>h(x')\},
$$
with $h$ a Lipschitz continuous function, $h(0)=0$ and $\mbox{Lip}(h)\leq M.$ Then $h$ is linear and, after a rotation,
$$
v(x)=x_n^+.
$$
\end{lem}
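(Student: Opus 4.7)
The plan is to adapt the strategy of Lemma 6.2 of \cite{DFS1} to the nonlinear $p_0$-Laplacian, using Lemma \ref{lemma51}, the boundary Harnack inequality for $p_0$-harmonic functions in Lipschitz domains from \cite{LN1}, and the Liouville theorem of Appendix \ref{app-liouv} as substitutes for the linear tools used in \cite{DFS1}. The first step is to establish monotonicity: for every direction $\tau$ in the open cone $\Gamma := \{\tau \in \R^n : \tau_n > M |\tau'|\}$ (with $M = \mathrm{Lip}(h)$) and every $s > 0$, the inequality $v(\cdot + s\tau) \ge v$ holds on $\R^n$. Indeed, for such $\tau$, the Lipschitz bound on $h$ gives $\{v>0\} \subset \{v(\cdot + s\tau) > 0\}$, so $v(\cdot + s\tau)$ is $p_0$-harmonic in $\{v>0\}$ with $v(\cdot + s\tau) \ge 0 = v$ on $F(v)$. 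The comparison principle for $p_0$-harmonic functions, combined with the global Lipschitz bound on $v$ to control behavior at infinity (via a Phragm\'en--Lindel\"of-type argument or via bounded truncations $\{v>0\}\cap B_R$ sent to infinity), then yields the monotonicity throughout $\{v>0\}$.

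\smallskip

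Next I would perform a blow-down. For $R>0$, the rescaled pair $v_R(x) := v(Rx)/R$ and $h_R(x') := h(Rx')/R$ is again a Lipschitz viscosity solution / Lipschitz graph pair with the same uniform constants. By Arzel\`a--Ascoli, a subsequence $v_{R_k} \to v_\infty$ and $h_{R_k} \to h_\infty$ locally uniformly; stability here relies on the $C^{1,\alpha}$ interior estimates of \cite{Fan} for $p_0$-harmonic functions to pass the equation to the limit, and on a barrier argument invoking Lemma \ref{lemma51} to pass the viscosity free boundary condition to the limit. A diagonal argument on the scaling further arranges that $v_\infty$ and $h_\infty$ are $1$-homogeneous. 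At an almost-every differentiability point $x'_0$ of $h_\infty$, a further blow-up of $v_\infty$ at $(x'_0, h_\infty(x'_0))$ produces a Lipschitz viscosity solution in a half-space with inward unit normal $\nu(x'_0)$, and the Liouville theorem of Appendix \ref{app-liouv} identifies this blow-up as $\langle y, \nu(x'_0)\rangle^+$. Combined with the $1$-homogeneity and the monotonicity in $\Gamma$, this forces $h_\infty$ to be linear and $v_\infty = x_n^+$ after a rotation of coordinates.

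\smallskip

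The principal obstacle is the descent from the blow-down $v_\infty$ back to $v$ itself---that is, passing from ``$h$ is linear at infinity'' to ``$h$ is linear everywhere''. For this I would use the boundary Harnack principle for $p_0$-harmonic functions in Lipschitz domains from \cite{LN1}, applied to pairs of nonnegative $p_0$-harmonic functions in $\{v>0\}$ vanishing on $F(v)$ (for example $v$ together with suitable translates or comparison barriers). Together with the monotonicity in $\Gamma$ from the first step and the half-plane structure at infinity from the blow-down, the boundary Harnack should give sufficient rigidity on ratios at the boundary to force $F(v)$ to be a hyperplane. A final application of the Liouville theorem of Appendix \ref{app-liouv} then identifies $v$ with $x_n^+$ (after the rotation), completing the proof.
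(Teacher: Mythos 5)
Your proposal diverges substantially from the paper's argument, and it contains two genuine gaps.

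The paper's proof is a short scaling argument. Because $v$ satisfies a scale-invariant problem and $\mathrm{Lip}(h_R)=\mathrm{Lip}(h)$ for every rescaling $v_R(x)=v(Rx)/R$, $h_R(x')=h(Rx')/R$, the Lewis--Nystr\"om ``Lipschitz implies $C^{1,\alpha}$'' theorem from \cite{LN1} applies at every scale with the \emph{same} constant $C=C(n,p_0,M)$. Writing the resulting $C^{1,\alpha}$ estimate for $h_R$ on $B'_1$ and unwinding the rescaling yields
$$|h(y')-h(0)-\langle \nabla h(0),y'\rangle|\le C\,\frac{|y'|^{1+\alpha}}{R^{\alpha}}\quad\text{for } y'\in B'_R,$$
and letting $R\to\infty$ forces the left side to vanish, so $h$ is linear. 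Lemma \ref{lemma7.3} then identifies $v$ with $x_n^+$ after a rotation. No monotonicity, no blow-down, no boundary Harnack is needed: the only inputs are the uniform $C^{1,\alpha}$ estimate and the Liouville lemma, exactly the tools you listed but deployed quite differently.

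Your route has two problems. First, you assert that a diagonal argument on the scaling parameter arranges the blow-down $(v_\infty,h_\infty)$ to be $1$-homogeneous. In general a diagonal argument only gives convergence along a subsequence of radii; homogeneity of the limit requires a monotonicity formula of Weiss type (which is not invoked, and is not a trivial adaptation for the $p_0$-Laplacian one-phase problem). Second, and more fundamentally, your ``descent'' step from the flatness of the blow-down at infinity back to flatness of $F(v)$ at every scale is not carried out and does not follow from boundary Harnack alone. Boundary Harnack controls ratios of two nonnegative $p_0$-harmonic functions vanishing on a Lipschitz boundary; it does not by itself convert ``$F(v)$ is asymptotically a hyperplane at infinity'' into ``$F(v)$ is a hyperplane.'' To close that step one would again need a scale-invariant improvement-of-flatness or $C^{1,\alpha}$ estimate --- that is, precisely the ingredient the paper's direct argument uses from the outset. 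In short: the simple and correct proof is to apply the \cite{LN1} regularity estimate at all scales simultaneously; your blow-down-then-descend scheme re-introduces, but does not resolve, the rigidity step.
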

\begin{proof} We will denote $B'_r$ the ball of radius $r$ centered at $0$ in $\mathbb{R}^{n-1}$.

We follow the idea of the proof in Lemma 6.2 in \cite{DFS1},  coupled with results about the regularity of the free boundary in the homogeneous two phase problem associated with the $p_0$-Laplace operator. In fact, if $v$ is a viscosity solution of \eqref{fbliminftylin} and its free boundary is a Lipschitz graph, from the regularity results in \cite{LN1}, we know that the free boundary $F(v)$ is $C^{1,\alpha}$ in $B_1,$  with a bound $C$ depending  only on $n, p_0$ and on the Lipschitz constant $M$ of $h.$  Then, 
\begin{equation}\label{calpha}
|h(x')-h(0)-\langle \nabla h(0),x'\rangle|\leq C|x'|^{1+\alpha}
\end{equation}
in $B'_1,$ where $C=C(n,p_0,M).$ Moreover, since $v$ is a global solution to \eqref{fbliminftylin}, considering the rescaled function $v_R(x)=\frac{v(Rx)}{R},$ we still obtain a solution to problem \eqref{fbliminftylin}  whose free boundary is the graph of the function $h_R(x')=\frac{h(Rx')}{R}$ for $x'\in \mathbb{R}^{n-1}.$ This function preserves the same Lipschitz constant  and then satisfies the inequality \eqref{calpha}. That is,
$$
|h_R(x')-h_R(0)-\langle \nabla h_R(0),x'\rangle|\leq C|x'|^{1+\alpha}
$$
for $x'\in B'_1. $ This fact can be read as
$$
|h(Rx')-h(0)-\langle \nabla h(0),Rx'\rangle|\leq CR|x'|^{1+\alpha}
$$
for $x'\in B'_1.$ Then,
$$
 |h(y')-h(0)-\langle \nabla h(0),y'\rangle|\leq C\frac{|y'|^{1+\alpha}}{R^\alpha}
$$
for $y' \in B'_R.$ Hence, passing to the limit $R\to \infty,$ we conclude 
 that $h$ is linear in $\mathbb{R}^{n-1}$. Since $v$ is Lipschitz, then Lemma \ref{lemma7.3} in Appendix \ref{app-liouv} applies and, up to a proper rotation, $v(x)=x_n^+.$ \end{proof}

For the sake of completeness we recall the following theorem we proved in \cite{FL}

\begin{thm}[Theorem 1.1 in \cite{FL}]
\label{flatmain1} Let
$u$ be a viscosity solution to \eqref{fb}
in $B_1$. Assume that  $0\in F(u),$ $g(0)=1$ and $p(0)=p_0.$   
There exists a universal constant $\bar{\varepsilon}>0$ such that, if the graph of $u$ is $\bar{\varepsilon}-$flat in $B_1,$  in the direction $e_n,$ that is
\begin{equation*}  
(x_n-\bar{\varepsilon})^+\leq u(x)\leq (x_n+\bar{\varepsilon})^+, \quad x\in B_1,
\end{equation*}
and
\begin{equation}  \label{pflat}
\|\nabla p\|_{L^{\infty}(B_1)}\leq \bar{\varepsilon},\quad \|f\|_{L^{\infty}(B_1)}\leq \bar{\varepsilon}, \quad [g]_{C^{0,\beta}(B_1)}\leq \bar{\varepsilon},
\end{equation}
 then $F(u)$ is $C^{1,\alpha}$ in $
B_{1/2}$.

The constants $\bar{\varepsilon}$ and $\alpha$ depend only on $p_{\min}$, $p_{\max}$, $\beta$  and $n$. 
\end{thm}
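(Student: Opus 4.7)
The plan is to follow De Silva's strategy for ``flatness implies $C^{1,\alpha}$'' in one-phase free boundary problems, implementing it in the $p(x)$-Laplacian setting via a compactness/contradiction argument whose limit is a linear problem for the linearized $p_0$-Laplacian. The whole proof reduces to a single \emph{improvement of flatness} lemma: there exist universal $\bar\varepsilon, \rho \in (0,1)$ such that if $u$ is a viscosity solution of \eqref{fb} with $p(0)=p_0$, $g(0)=1$, $0\in F(u)$, and
\[
(x_n-\varepsilon)^+ \le u(x) \le (x_n+\varepsilon)^+ \quad \text{in } B_1,
\]
with $\varepsilon \le \bar\varepsilon$ and the smallness conditions \eqref{pflat} on $\nabla p$, $f$, $[g]_{C^{0,\beta}}$ holding, then there is a unit vector $\nu$ with $|\nu-e_n|\le C\varepsilon$ such that
\[
(\langle x,\nu\rangle-\tfrac{\varepsilon}{2}\rho)^+ \le u(x) \le (\langle x,\nu\rangle+\tfrac{\varepsilon}{2}\rho)^+ \quad \text{in } B_\rho.
\]
Once this is in hand, a scaling iteration applied at every free boundary point in $B_{1/2}$ produces a linear approximation with geometric decay of the flatness, hence $C^{1,\alpha}$ regularity of $F(u)$ in $B_{1/2}$ in the usual way.

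For the improvement of flatness I will argue by contradiction. Suppose there exist $u_k$, $p_k$, $f_k$, $g_k$ with $\varepsilon_k \to 0$ violating the conclusion. I introduce the linearized (partial-hodograph) variable
\[
\tilde u_k(x) \;=\; \frac{u_k(x) - x_n}{\varepsilon_k}, \qquad x \in B_1 \cap \{x_n > -\varepsilon_k\},
\]
viewed on the ``strip'' around the hyperplane $\{x_n=0\}$. The first key step is to establish a \emph{partial Harnack inequality in flat regions}: by covering $\{|x_n|<\varepsilon_k/2\} \cap B_{1/2}$ with balls on which $u_k - x_n$ is trapped in a thin strip and invoking Theorem~\ref{harnack-small-perturb} (the Harnack inequality adapted to small perturbations of $p_0$ and small $f$), the oscillation of $\tilde u_k$ improves by a universal factor on a smaller ball. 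Iterating this oscillation decay gives uniform $C^{0,\gamma}$ estimates on $\tilde u_k$ on compact subsets of $B_{1/2} \cap \{|x_n| < 1/4\}$. Here is precisely where the new Harnack inequality of Section~\ref{section4} is indispensable: the standard inhomogeneous Harnack inequality \eqref{harnack-p(x)} carries an additive error of order $R$ that would be fatal in the iteration, while Theorem~\ref{harnack-small-perturb} makes the additive error as small as needed once $\|\nabla p\|_\infty, \|p-p_0\|_\infty, \|f\|_\infty$ are small.

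Passing to the limit along a subsequence, $\tilde u_k \to \tilde u_\infty$ uniformly on compact subsets of $B_{1/2}^+ := B_{1/2} \cap \{x_n \ge 0\}$, with $\tilde u_\infty$ H\"older continuous. The next step is to identify the limit problem. In the interior $\{x_n > 0\}$, the expansion \eqref{p(x)-vs-pucci} together with $\varepsilon_k \to 0$, $p_k(0) \to p_0$, $\nabla p_k \to 0$, $f_k \to 0$ shows (via a standard viscosity-solution stability argument: test with smooth functions of the form $x_n + \varepsilon_k \varphi(x)$) that $\tilde u_\infty$ satisfies the linearized homogeneous $p_0$-Laplacian,
\[
\Delta \tilde u_\infty + (p_0 - 2)\, \partial_{nn} \tilde u_\infty = 0 \qquad \text{in } B_{1/2} \cap \{x_n>0\},
\]
a constant-coefficient uniformly elliptic equation. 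On the flattened free boundary $\{x_n=0\}$, using $g_k(0)\to 1$ and the oscillation control $[g_k]_{C^{0,\beta}}\to 0$ together with the viscosity boundary condition $|\nabla u|=g$, a careful touching argument with paraboloids tangent to $\{x_n=0\}$ yields the Neumann-type boundary condition $\partial_n \tilde u_\infty = 0$ on $B_{1/2}\cap \{x_n=0\}$. The classical $C^{1,\alpha}$-boundary regularity for this constant-coefficient Neumann problem gives a linear approximation $\tilde u_\infty(x) = \langle x', \xi\rangle + o(|x|)$ at $0$, and pulling this back through the definition of $\tilde u_k$ produces the required improved-flatness direction $\nu \approx e_n + \varepsilon_k \xi$, contradicting the standing assumption.

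The main obstacle will be the boundary step, namely passing the free boundary condition to the limit for the rescaled functions $\tilde u_k$ uniformly in the non-constant exponent, the inhomogeneity and the spatial dependence of $g$. Concretely, one has to show that smooth barriers that would force $\partial_n \tilde u_\infty = 0$ on $\{x_n=0\}$ can be perturbed to strict comparison sub/supersolutions of the original $p(x)$-Laplacian free boundary problem whose flux is consistent with the quantitatively small error $[g_k]_{C^{0,\beta}}\le \bar\varepsilon$; this requires a barrier of the type produced by Lemma~\ref{barry}, tuned so that the error in $\Delta_{p(x)}$ coming from the $\langle \nabla p,\nabla v\rangle \log|\nabla v|$ term can be absorbed into the smallness of $\bar\varepsilon$. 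Once this limiting free boundary condition is secured, the rest of the argument is a routine iteration.
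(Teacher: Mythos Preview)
This statement is not proved in the present paper: it is recalled from \cite{FL} (Theorem~1.1 there) and cited without argument. Your outline is the correct architecture---it is De~Silva's scheme, which is exactly what \cite{FL} adapts---so at the level of strategy (improvement of flatness by contradiction/compactness, limit problem $\Delta\tilde u_\infty+(p_0-2)\partial_{nn}\tilde u_\infty=0$ in $\{x_n>0\}$ with Neumann condition $\partial_n\tilde u_\infty=0$, then iteration) you and \cite{FL} agree.

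Two points deserve correction. First, your claim that Theorem~\ref{harnack-small-perturb} is ``indispensable'' for the partial Harnack step is off: that theorem is new to the \emph{present} paper and is deployed here for the nondegeneracy estimate in Proposition~\ref{lemma7-1} (Step~III), not for the flat-implies-$C^{1,\alpha}$ result, which \cite{FL} had already established without it. Second, and more substantively, the ``partial Harnack'' in De~Silva's method is not an application of interior Harnack to $u_k-x_n$ on small balls: since the $p(x)$-Laplacian is nonlinear, $u_k-x_n$ does not satisfy a usable equation, so ``invoking Theorem~\ref{harnack-small-perturb}'' on it is not available. The actual mechanism is a barrier argument---one slides explicit radial comparison sub/super\-solutions of the free boundary problem (of the kind produced by Lemma~\ref{barry}) until contact, and the viscosity conditions at the touching point force the one-sided improvement in flatness. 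Interior Harnack for the PDE enters only to locate a favourable starting point for the barrier, applied to $u$ itself (positive away from $F(u)$), not to $u-x_n$. Your identification of the obstacle at the free boundary condition in the limit is accurate, and the barrier machinery you allude to in your last paragraph is indeed what resolves it; that same machinery is what drives the partial Harnack, which you have under-described.
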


In the proof of Theorem \ref{Lipmain} we will also use

\begin{prop}\label{casealphageq1}
Let $u_k$ be a sequence of viscosity solutions to \eqref{fb} in $B_2$, with right hand side $f_k$, exponent $p_k$ and free boundary condition $g_k$, where $f_k$, $p_k$ and $g_k$ are as in Subsection \ref{assump}. Assume that $u_k$ are uniformly Lipschitz and that, for some $\alpha>0$ and $\nu\in\R^n$ with 
$|\nu|=1$,  $u_k\to u_0(x)=\alpha \langle x,\nu\rangle^+$,  $f_k\to 0$, $p_k\to p_0$, $\nabla p_k\to 0$ and $g_k\to 1$ uniformly in $B_2$. Assume moreover that 
$F(u_k)$ are uniform Lipschitz graphs and $F(u_k)\to F(u_0)$  in Hausdorff distance in $B_2$. 
Then $\alpha\geq 1$. 
\end{prop}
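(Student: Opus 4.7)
I argue by contradiction. Suppose $\alpha < 1$ (note $\alpha \ge 0$ is automatic since $u_k \ge 0$) and, after a rotation, take $\nu = e_n$, so $u_0(x) = \alpha x_n^+$ and $F(u_0) = B_2 \cap \{x_n = 0\}$. Fix $\beta$ with $\alpha < \beta < 1$. The overall strategy is to pass the viscosity free boundary condition to the limit in the ``touching from above'' direction: if such a condition can be inherited by $u_0$, then testing $u_0$ at the origin with the linear function $\varphi(x) = \beta x_n$, which satisfies $\varphi^+ \ge u_0$ globally with $\varphi^+(0) = u_0(0) = 0$, forces $\beta \ge 1$ for every $\beta > \alpha$, i.e.\ $\alpha \ge 1$, a contradiction.

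The linear $\varphi$ above does not touch $u_0$ strictly, so it cannot be directly transferred to $u_k$. I replace it by a ball barrier. Let $y = -e_n$ and
\begin{equation*}
V(x) = \frac{\beta}{\gamma}\bigl(1 - |x - y|^{-\gamma}\bigr) \quad \text{in } \{|x-y|\ge 1\},
\end{equation*}
extended by $0$ inside $\overline{B_1(y)}$, with $\gamma > \max\{0,(n-p_0)/(p_0-1)\}$. A direct radial computation yields $\Delta_{p_0} V \le -\delta_0 < 0$ in $\{V>0\}$, $V \equiv 0$ and $|\nabla V| \equiv \beta$ on $\partial B_1(y) = F(V)$, and the first order expansion $V(x) = \beta x_n + O(|x|^2)$ in $\{V>0\}$ near the origin (which lies on $F(V)$ with outward normal $e_n$). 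Hence $V > u_0$ strictly on $B_\rho \setminus \{0\}$ for some small $\rho > 0$. Combining the identity \eqref{p(x)-vs-pucci} with the uniform convergences $p_k \to p_0$, $\nabla p_k \to 0$, $f_k \to 0$ and $g_k \to 1$, the function $V$ becomes, for all large $k$, a strict comparison supersolution of the $k$-th free boundary problem on $B_\rho$ in the sense of Definition \ref{defsub}:
\begin{equation*}
\Delta_{p_k(x)} V < f_k \text{ in } \{V>0\}\cap B_\rho, \qquad |\nabla V| = \beta < g_k \text{ on } F(V)\cap B_\rho.
\end{equation*}

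I then slide the barrier. For $t \ge 0$ consider $V_t(x) := V(x + t e_n)$, still a strict comparison supersolution, with free boundary $\partial B_1(-(1+t)e_n)$ tangent to $\{x_n = -t\}$ at $-te_n$. For some $t_0 > 0$ large, $V_{t_0}$ is bounded below by a positive constant on $\overline{B_\rho}$ and dominates $u_k$ there for large $k$ (by uniform convergence). Decreasing $t$ continuously, set $t_k^* = \inf\{t \ge 0 : V_t \ge u_k \text{ on } \overline{B_\rho}\}$; by continuity $V_{t_k^*}$ touches $u_k$ from above at some $x_k \in \overline{B_\rho}$. By Lemma \ref{compare} applied to the strict supersolution $V_{t_k^*}$ and the viscosity solution $u_k$, the contact cannot lie in $\Omega^+(u_k) \cup F(u_k)$, so $u_k(x_k) = V_{t_k^*}(x_k) = 0$. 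Using the strict domination of $V$ over $u_0$ off the origin, the Hausdorff convergence $F(u_k) \to \{x_n=0\}$, and the fact that each $F(u_k)$ is a Lipschitz graph, one shows $t_k^* \to 0$ and $x_k \to 0$; a slight tangential perturbation of the center of the barrier (using the Lipschitz graph structure of $F(u_k)$) can be used to arrange $x_k \in F(u_k)$ at the first contact. At such $x_k$, Definition \ref{defnhsol1}(2) applied to $V_{t_k^*}^+$ (which is $C^2$ on the closure of its positivity set, with nonvanishing gradient there) yields $|\nabla V_{t_k^*}(x_k)| \ge g_k(x_k)$, i.e.\ $\beta + o(1) \ge 1 + o(1)$ as $k \to \infty$, contradicting $\beta < 1$.

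The main obstacle is precisely the last point in the sliding step: ruling out that the first contact between $V_t$ and $u_k$ occurs in the interior of $\{u_k = 0\}$ (where the free boundary condition is unavailable) and forcing it onto $F(u_k)$ instead. This is where the hypotheses that $F(u_k)$ are \emph{uniform Lipschitz graphs} and that $F(u_k) \to F(u_0)$ in Hausdorff distance are essential — they allow the spherical free boundary $F(V_{t_k^*})$, after a small lateral shift, to meet the graph $F(u_k)$ from outside the positivity set of $u_k$ at the first critical $t_k^*$, delivering the desired contact at a point of $F(u_k)$.
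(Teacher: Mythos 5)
Your overall strategy --- slide a strict comparison supersolution against $u_k$ and extract a contact at the free boundary --- is the same as the paper's, but you replace the paper's bump-deformed Pucci barrier $v_\varepsilon$ (Steps I--IV of the proof of Proposition~\ref{casealphageq1}) by a radial barrier slid by a vertical translation. The radial barrier is fine (with the exponent $\gamma$ chosen as you indicate, plus the small corrections coming from $p_k\to p_0$, $\nabla p_k\to 0$ and $f_k\to 0$ one does get a strict comparison supersolution in the sense of Definition~\ref{defsub} for large $k$). The genuine gap is exactly the one you flag as "the main obstacle": the contact argument.

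As written, your argument is internally inconsistent. You define $t_k^*=\inf\{t\geq 0: V_t\geq u_k \text{ on }\overline{B_\rho}\}$, observe that some contact point $x_k$ exists, and then invoke Lemma~\ref{compare} to conclude $x_k\notin \Omega^+(u_k)\cup F(u_k)$, hence $x_k\in\mathrm{int}\{u_k=0\}$. But then you want to move $x_k$ onto $F(u_k)$ by a "slight tangential perturbation." That cannot work: Lemma~\ref{compare} always excludes contact on $\Omega^+(u_k)\cup F(u_k)$ for a strict supersolution dominating $u_k$, so if all you know is that there is \emph{some} contact point, you will never locate it on $F(u_k)$ by a perturbation --- you have proved it cannot be there. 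In fact, once the zero set of $V_t$ overlaps with $\mathrm{int}\{u_k=0\}$ (which happens almost immediately as $t$ decreases from $t_0$), there are \emph{always} trivial contact points where $V_t=u_k=0$, and choosing $x_k$ among these gives no information whatsoever.

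What is actually needed, and what the paper does, is to produce a contact point that is \emph{forced} to lie in $\overline{\{u_k>0\}}$ and in the interior of the comparison domain, so that it collides head-on with Lemma~\ref{compare}. In the paper this is arranged by (a) defining the critical parameter as $\varepsilon_k=\sup\{\varepsilon: v_\varepsilon\geq u_k \text{ in } \overline{\{u_k>0\}}\}$, so that the violating points, and hence the limiting contact point, automatically lie in $\overline{\{u_k>0\}}$, and (b) establishing quantitative barriers (Steps II and III) ruling out contact on $\partial B_1$. Your write-up does neither: you never observe that a violation $V_t(x)<u_k(x)$ forces $u_k(x)>0$ (since $V_t\geq 0$), which is the elementary remark that would place the limiting contact in $\overline{\{u_k>0\}}=\Omega^+(u_k)\cup F(u_k)$ and yield the contradiction directly; and you never rule out that the contact occurs on $\partial B_\rho$, which requires a quantitative estimate like the paper's \eqref{ukleCdelta-b}--\eqref{bound-choice-delta-c}. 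Replacing the unjustified perturbation claim by (i) the observation that violating points lie in $\{u_k>0\}$, so any limit is in $\overline{\{u_k>0\}}$, and (ii) a boundary estimate showing $V_{t_k^*}>u_k$ on $\partial B_\rho$, would close the gap and essentially reproduce the structure of the paper's Step~IV with a different barrier.
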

\begin{proof} Without loss of generality we assume that $\nu=e_n$.
Suppose by contradiction  that $0<\alpha<1$. We take $\varphi\in C^{\infty}(\mathbb{R}^n),$ with 
$0\leq \varphi\leq 1,$ $\varphi\equiv 0$ in ${B^c_{1/2}}$ and $\varphi\equiv 1$ in $B_{1/4}.$ For $0<\xi<1/4$ depending on $\alpha$, to be fixed later, and $0<\varepsilon<1,$ we define
$$
D_\varepsilon=D_\varepsilon^\xi=B_1\cap\{x_n>-\xi+\varepsilon\varphi (x)\}.
$$

Let $\lambda_0,\Lambda_0$ as in  \eqref{p(x)-vs-pucci}. For $\rho>0$ fixed and depending on $\alpha,$ to be precised later, we consider $v_\varepsilon$ such that
\begin{equation*}
\begin{split}
\left\{
\begin{array}{l}
\mathcal{M}^{+}_{\lambda_0,\Lambda_0}(D^2v_\varepsilon)=-\rho,\quad\mbox{in}\:\: D_\varepsilon\\
v_\varepsilon=\alpha (x_n+\xi),\quad\mbox{on}\:\:\partial B_1\cap\{x_n\geq -\xi\},\\
v_\varepsilon=0,\quad\mbox{on}\:\:B_1\cap\{x_n= -\xi+\varepsilon\varphi(x)\},
\end{array}
\right.
\end{split}
\end{equation*}
$v_\varepsilon\equiv0$ on $\overline{B_1}\setminus \overline{D_\varepsilon}$.

\bigskip

{\it Step I. } We will show that, in $B_{3/4}$, $v_\varepsilon$ is a strict supersolution to problem \eqref{fb} with   right hand side $f_k$, exponent $p_k$ and free boundary condition $g_k$, for  $\rho$, $\varepsilon$ and $\xi$ suitably chosen and large $k$.

\smallskip

We first observe that $v_\varepsilon>0$ in $D_\varepsilon.$ In addition, $v_\varepsilon\in C^{2,{\tilde{\alpha}}}(D_\varepsilon)$ and $v_\varepsilon\in C^{1,{\tilde{\alpha}}}(\overline{D_\varepsilon\cap B_{3/4}})$. We define 
$$
w_\varepsilon=v_\varepsilon-\alpha(x_n+\xi),
$$
that satisfies
\begin{equation*}
\begin{split}
\left\{
\begin{array}{l}
\mathcal{M}^{+}_{\lambda_0,\Lambda_0}(D^2w_\varepsilon)=-\rho,\quad\mbox{in}\:\: D_\varepsilon\\
w_\varepsilon=0,\quad\mbox{on}\:\:\partial B_1\cap\{x_n\geq -\xi\},\\
w_\varepsilon=-\alpha (x_n+\xi)=-\alpha\varepsilon\varphi(x),\quad\mbox{on}\:\: B_1\cap\{x_n= -\xi+\varepsilon\varphi(x)\}.\\
\end{array}
\right.
\end{split}
\end{equation*}
Hence, using ABP estimate (Theorem 3.6 in \cite{CC}), we obtain that $||w_\varepsilon||_{L^\infty(D_\varepsilon)}\leq C_0(\rho+\varepsilon)$,
 with $C_0>0$  independent of $\varepsilon$ and $\xi$ universal.
Then from the inner estimates in Corollary 5.7, \cite{CC} and the boundary estimates in Theorem 1.4, \cite{SS} we deduce that there exist positive constants $C_1$ and ${C_2}$ such that
\begin{equation}\label{wep-c1alpha}
\begin{split}
||w_\varepsilon||_{C^{1,{\tilde{\alpha}}}(\overline{D_\varepsilon\cap B_{3/4}})}&\leq C_1\left(||w_\varepsilon||_{{L^\infty}(D_\varepsilon)}+\rho+\alpha\varepsilon||\varphi||_{C^{1,{\tilde{\alpha}}}(B_1)}\right)\leq {C_2}(\rho+\varepsilon),
\end{split}
\end{equation}
 where $C_1$ and ${C_2}$ depend on  the maximal curvature of $\{x_n=-\xi+\varepsilon \varphi(x)\},$ see \cite{SS}, and can be chosen universal independent
of $\varepsilon$ and $\xi$. Then
$$
||\nabla w_\varepsilon||_{L^\infty(\overline{D_\varepsilon\cap B_{3/4}})}\leq {C_2}(\rho+\varepsilon)
$$
and
$$
||\nabla v_\varepsilon|-\alpha|\leq |\nabla v_\varepsilon-\alpha e_n|\leq {C_2}(\rho+\varepsilon) \qquad\mbox{ in } \overline{D_\varepsilon\cap B_{3/4}}.
$$
We now fix
\begin{equation}\label{choice-const}
\begin{array}{l}
\rho=\frac{c(\alpha)}{{C_2}},\qquad 0<\varepsilon\le \min\{\frac{c(\alpha)}{{C_2}},\frac{1}{4}\}, \qquad 2\xi=\min\{\frac{c(\alpha)}{{C_2}},\frac{1}{4}\},\\
\\
\text{with } \quad c(\alpha)=\frac{1}{2}\min\{\frac{1-\alpha}{2}, \frac{\alpha}{2}\} \quad \text{and } C_2 \quad \text{as in} \quad \eqref{wep-c1alpha},\\
\end{array}
\end{equation}
and then, 
\begin{equation}\label{bound-vep}
\frac{\alpha}{2}\le|\nabla v_\varepsilon|\le\frac{1+\alpha}{2}\qquad\mbox{ in } \overline{D_\varepsilon\cap B_{3/4}}.
\end{equation}
Recalling \eqref{p(x)-vs-pucci}, we obtain,  in $D_\varepsilon\cap B_{3/4},$
$$
\Delta_{p_k(x)}v_\varepsilon\leq  I+II
$$
where
\begin{equation*}
\begin{split}
I= |\nabla v_\varepsilon|^{p_k(x)-2}\mathcal{M}^+_{\lambda_0,\Lambda_0}(D^2v_\varepsilon)
\end{split}
\end{equation*}
and
\begin{equation*}
\begin{split}
II= |\nabla v_\varepsilon|^{p_k(x)-2}\langle \nabla p_k(x),\nabla v_\varepsilon\rangle\log |\nabla v_\varepsilon|.
\end{split}
\end{equation*}
Then
\begin{equation*}
\begin{split}
I= |\nabla v_\varepsilon|^{p_k(x)-2}(-\rho)\leq -c_1\rho,
\end{split}
\end{equation*}
since $ |\nabla v_\varepsilon|^{p_k(x)-2}\geq c_1=c_1(\alpha, p_{\max})$ because of \eqref{bound-vep}.
Moreover,
\begin{equation*}
\begin{split}
II\leq |\nabla v_\varepsilon|^{p_k(x)-1}| \nabla p_k(x)|\left|\log |\nabla v_\varepsilon|\right|\leq |\nabla p_k(x)| c_2,
\end{split}
\end{equation*}
where we have used that $|t|^{p_k(x)-1}\left|\log |t|\right|\leq c_2=c_2(p_{\min})$ for $|t|\leq 1$ and \eqref{bound-vep}.
Then,
\begin{equation}\label{bound-eq-vep}
\begin{split}
\Delta_{p_k(x)}v_\varepsilon\leq -c_1\rho+ {||\nabla p_k||}_{L^\infty}c_2\leq -c_1\rho+c_2\frac{c_1}{c_2}\frac{\rho}{2}=-\frac{c_1}{2}\rho
\end{split}
\end{equation}
if $k$ is large so that ${||\nabla p_k||}_{L^\infty}\leq \frac{c_1}{c_2}\frac{\rho}{2}.$

On the other hand, for $k$ large, we have ${||f_k||}_{L^\infty}<\frac{c_1}{2}\rho$  and ${g_k(x)}>\frac{1+\alpha}{2}$ in $B_1$ and then,
\begin{equation}\label{vep-supers}
\left\{\begin{array}{l}
\Delta_{p_k(x)}v_\varepsilon\leq -\frac{c_1}{2}\rho<-{||f_k||}_{L^\infty}\leq f_k,\quad \mbox{in}\:\:D_\varepsilon\cap B_{3/4}\\
|\nabla v_\varepsilon|\geq \frac{\alpha}{2},\quad \mbox{in}\:\:\overline{D_\varepsilon\cap B_{3/4}}\quad (\mbox{this implies}\:\:\:\nabla v_\varepsilon\not=0)\\
|\nabla v_\varepsilon|\le\frac{1+\alpha}{2}<g_k,\quad \mbox{in}\:\:\overline{D_\varepsilon\cap B_{3/4}}.
\end{array}
\right.
\end{equation}
Hence, for our choice of $\rho$, $\varepsilon$ and $\xi$ done in \eqref{choice-const}, $v_\varepsilon$ is a strict supersolution  to problem \eqref{fb} in  $B_{3/4}$ with   right hand side $f_k$, exponent $p_k$ and free boundary condition $g_k$, for   large $k$, as claimed.

\bigskip

{\it Step II. } We will now get some uniform bounds for the functions $u_k$. In fact, let $v$ be such that
\begin{equation*}
\begin{split}
\left\{
\begin{array}{l}
\mathcal{M}^{+}_{\lambda_0,\Lambda_0}(D^2v)=-\rho,\quad\mbox{in}\:\: \{x_n>-{\xi}, \ 5/8<|x|<1\},  \\
v=\alpha (x_n+\xi),\quad\mbox{on}\:\:\partial B_1\cap\{x_n\geq -\xi\},\\
v=0,\quad\mbox{on}\:\:\{x_n= -\xi, \ 5/8<|x|<1 \},\\
v=0,\quad\mbox{on}\:\:\partial B_{5/8}\cap\{x_n\geq -\xi\}.\\
\end{array}
\right.
\end{split}
\end{equation*}
Then, $v_{\varepsilon}>v>0$ in $\{x_n>-{\xi}, \ 5/8<|x|<1\}$. Moreover, there exists $c_3>0$ universal such that 
\begin{equation}\label{bound-v}
v>c_3\quad\text{ in }\{x_n\ge-\frac{\xi}{2}, \ 3/4\le|x|\le 1\}.
\end{equation}

Let us now fix $\delta$ universal such that
\begin{equation}\label{choice-delta-1}
0<\delta<\frac{\xi}{2} \quad \text{ and } \quad 2L\delta <c_3,
\end{equation} 
where $L$ is the uniform Lipschitz constant of the functions $u_k$.

We will first show that, if $k$ is large,
\begin{equation}\label{uk0}
u_k=0\qquad\text{ in } \overline{B_1}\cap\{x_n\le -\delta\},
\end{equation}
\begin{equation}\label{ukleCdelta}
u_k\le 2L\delta\qquad\text{ in } \overline{B_1}\cap\{|x_n|\le\delta\},
\end{equation}
\begin{equation}\label{gradukge}
\frac{\alpha}{2}\le|\nabla u_k|\le L\qquad\text{ in } \overline{B_1}\cap\{x_n\ge\delta\}.
\end{equation}

In fact, since $\partial \{u_k>0\}\to\partial \{u_0>0\}=\{x_n=0\}$ in the Hausdorff distance in $B_2$, then 
$\partial\{u_k>0\}\subset\{|x_n|<\delta\}$  in $B_2$, if $k$ is large. Hence \eqref{uk0} and  \eqref{ukleCdelta} follow.

Since $u_k\to u_0=\alpha x_n^+$ uniformly in $B_2$, then for large $k$,
\begin{equation}\label{equat-xndelta}
\left\{\begin{array}{l}
u_k\ge \alpha\frac{\delta}{4}\quad\text{in }B_2\cap\{x_n >\frac{\delta}{2}\},\\
\Delta_{p_k(x)}u_k =f_k \quad\text{in }B_2\cap\{x_n >\frac{\delta}{2}\},\\
\end{array}
\right.
\end{equation}
and then, by the $C^{1,\bar{\alpha}}$ estimates (Theorem 1.1 in \cite{Fan}),
\begin{equation*}
\nabla u_k\to \alpha e_n\quad \text{ uniformly in } \overline{B_1}\cap\{x_n\ge{\delta}\},
\end{equation*}
which gives \eqref{gradukge} for $k$ large.

We now observe that $v-u_0\geq \frac{\alpha}{2}\xi$ on  $\partial B_1\cap \{x_n\geq -\frac{\xi}{2}\}$ and
\begin{equation*}
v-u_0\geq \frac{\alpha\xi}{4}\quad\text{ in  }\{x_n\ge -\frac{\xi}{2}, \ 1-\sigma\le|x|\le 1\},
\end{equation*}
for some universal $0<\sigma<1/4$. Here we have used that $v\in C^{\bar{\alpha}}(\{x_n\ge-{\xi}, \ 5/8\le|x|\le 1\})$ (see, for
instance, Theorem 2 in \cite{Si}). Then, 
\begin{equation*}
v-u_k\geq \frac{\alpha\xi}{8}\quad\text{ in  }\{x_n\ge-\frac{\xi}{2}, \ 1-\sigma\le|x|\le 1\},
\end{equation*}
for large $k$. Recalling \eqref{bound-v},  \eqref{choice-delta-1} and \eqref{ukleCdelta}, we obtain
\begin{equation}\label{ukleCdelta-b}
v_{\varepsilon} > u_k\qquad\text{ in } \{|x_n|\le\delta,  \ 3/4 \le |x|\le 1 \},
\end{equation}
\begin{equation}\label{bound-choice-delta-c}
v_{\varepsilon}-u_k\geq \frac{\alpha\xi}{8}\quad\text{ in  }\{x_n\ge -\frac{\xi}{2}, \ 1-\sigma\le |x|\le 1\}.
\end{equation}

\bigskip

{\it Step III. } We will show that
\begin{equation}\label{compare-vep-uk}
v_{\varepsilon}\ge u_k\quad \text {in }\overline{B_1}, \quad \text{ for every }0<\varepsilon<\frac{\xi}{2},
\end{equation}  
if  $k$ is large.

If the result is not true, then
\begin{equation*}
\max_{\overline{B_1}} (u_k-v_{\varepsilon})= (u_k-v_{\varepsilon})(\tilde{x}_k)>0\quad \text{ for some }\tilde{x}_k\in \overline{B_1}.
\end{equation*} 
If $|\tilde{x}_k|\ge 3/4$, then \eqref{uk0}, \eqref{ukleCdelta-b} and \eqref{bound-choice-delta-c} imply that
$$\tilde{x}_k\in \{x_n >\delta,  \ 3/4 \le |x|\le 1-\sigma \}.$$
{}From \eqref{gradukge} and \eqref{equat-xndelta} we get
\begin{equation*}
\frac{\alpha}{2}\le|\nabla v_{\varepsilon}(\tilde{x}_k)|\le L
\end{equation*}
and 
\begin{equation}\label{equat-vep}
\Delta_{p_k(\tilde{x}_k)}v_{\varepsilon}(\tilde{x}_k) \ge f_k(\tilde{x}_k).
\end{equation}
Now, the uniform $C^{1, \bar{\alpha}}$ estimates for $v_{\varepsilon}$ in $\overline{B_1}\cap\{x_n\ge 0 \}$ give 
\begin{equation*}
\frac{\alpha}{4}\le|\nabla v_{\varepsilon}|\le 2L \quad\text{ in } B_{\mu}(\tilde{x}_k),
\end{equation*}
for some $\mu>0$ universal. Then, proceeding as in the computations leading to \eqref{bound-eq-vep}, we get
\begin{equation*}
\Delta_{p_k(x)}v_{\varepsilon}\le -\bar{c}\rho\quad\text{ in } B_{\mu}(\tilde{x}_k),
\end{equation*}
with $\bar{c}>0$ universal, if $k$ is large. Therefore,
\begin{equation*}
\Delta_{p_k(x)}v_{\varepsilon}<f_k \quad\text{ in } B_{\mu}(\tilde{x}_k),
\end{equation*}
for large $k$, which contradicts \eqref{equat-vep}. Then $\tilde{x}_k\in B_{3/4}$.

Since $\varepsilon<\frac{\xi}{2}$ and $\delta<\frac{\xi}{2}$, we have
$$\partial\{u_k>0\}\subset\{|x_n|<\delta\}\subseteq\{x_n>-\xi+\varepsilon \varphi(x)\},$$
 and then $\tilde{x}_k\in\{u_k>0\}\cap B_{3/4}\subset D_{\varepsilon}\cap B_{3/4}$.

So \eqref{vep-supers} implies that, for large $k$,
\begin{equation*}
\nabla v_{\varepsilon}(\tilde{x}_k)\neq 0
\end{equation*}
and 
\begin{equation*}
f_k(\tilde{x}_k)>\Delta_{p_k(\tilde{x}_k)}v_{\varepsilon}(\tilde{x}_k)\ge f_k(\tilde{x}_k),
\end{equation*}
a contradiction. This shows \eqref{compare-vep-uk}.

\bigskip

{\it Step IV. } We will finally show that, for some $\varepsilon_k>0$, we have $v_{\varepsilon_k}\ge u_k$ in $B_{3/4}$ and 
$F(u_k)\cap F(v_{\varepsilon_k})\cap B_{3/4}\neq\emptyset$, 
if  $k$ is large enough. This will contradict that $v_{\varepsilon}$ is a strict supersolution to  problem \eqref{fb} in  $B_{3/4}$ and
concludes the proof.

\smallskip

In fact, from \eqref{compare-vep-uk} we know that 
$$
v_\varepsilon\ge u_k\quad \mbox{in}\:\:\overline{\{u_k>0\}}\quad \text{ for } 0<\varepsilon<\frac{\xi}{2}.
$$
Let 
$$
\varepsilon_k=\sup\left\{\varepsilon > 0:\:\:v_\varepsilon \ge u_k\:\:\:\mbox{in}\:\:\overline{\{u_k>0\}}\right\}.
$$
Since $\xi\le 1/4$, if we consider $\varepsilon=2\xi$, then $B_\xi\subset \{x_n<-\xi+\varepsilon\varphi(x)\}$ because, in $B_{1/4}$, $x_n=-\xi+\varepsilon\varphi(x)=-\xi+2\xi=\xi.$

Moreover, $0\in \partial\{u_0>0\}$ and $\partial\{u_k>0\}\to \partial \{u_0>0\}$ in the Hausdorff distance, then for $k$ large, there exist $\hat{x}_k\in B_\xi\cap\partial\{u_k>0\}$  and $\bar{x}_k\in B_\xi$ such that $u_k(\bar{x}_k)>0=v_{\varepsilon}(\bar{x}_k)$,
with $\bar{x}_k\in\overline{\{u_k>0\}}$. Then, $0<\varepsilon_k <2\xi$. 

Therefore, there holds
$v_{\varepsilon_k}\geq u_k$ in 
$\overline{\{u_k>0\}}$ and then,
$$
v_{\varepsilon_k}\geq u_k\quad \text{ in }\overline{B_{1}},
$$ 
$$
v_{\varepsilon_k}(x_k)=u_k(x_k)\quad \text{ for some }x_k\in \overline{\{u_k>0\}}.
$$

Proceeding exactly as in {\it Step III} we obtain that 
$$x_k\in \overline{\{u_k>0\}}\cap B_{3/4}.$$

If $x_k\in \{u_k>0\}\cap B_{3/4},$ then  $v_{\varepsilon_k}(x_k)=u_k(x_k)>0.$ Since $v_{\varepsilon_k}\ge u_k>0$ in a neighborhood of $x_k$, this produces a contradiction because $v_{\varepsilon_k}$ is a strict supersolution  to  problem \eqref{fb} in $B_{3/4}$.

As a consequence $x_k\in \partial\{u_k>0\}\cap B_{3/4}$ and $v_{\varepsilon_k}(x_k)=u_k(x_k)=0$ and there exist $x_{k_j}\to x_k$ such that $u_k(x_{k_j})>0$. Then $v_{\varepsilon_k}(x_{k_j})\geq u_{k}(x_{k_j})>0$ and therefore $x_k\in \partial \{v_{\varepsilon_k}>0\}.$
 Hence $x_k\in F(u_k)\cap F(v_{\varepsilon_k})\cap B_{3/4},$ which gives a contradiction again.  This shows that $\alpha\geq 1$ and completes the proof.
\end{proof}

We will also need

\begin{prop}\label{alpha-leq-1}
Let $u_k$ be a sequence of viscosity solutions to \eqref{fb} in $B_2$, with right hand side $f_k$, exponent $p_k$ and free boundary condition $g_k$, where $f_k$, $p_k$ and $g_k$ are as in Subsection \ref{assump}. Assume that, for some $\alpha\ge 0$ and $\nu\in\R^n$ with 
$|\nu|=1$,  $u_k\to u_0(x)=\alpha \langle x,\nu\rangle^+$,  $f_k\to 0$, $p_k\to p_0$, $\nabla p_k\to 0$ and $g_k\to 1$ uniformly in $B_2$. 
Then $\alpha\le 1$. 
\end{prop}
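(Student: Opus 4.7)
The plan is to argue by contradiction. Supposing $\alpha > 1$, I would construct a one-parameter family of strict comparison subsolutions $v_\varepsilon$ of \eqref{fb} (in the sense of Definition \ref{defsub}) that can be slid upward from below $u_k$ until a contact occurs, and then invoke Lemma \ref{compare} to derive a contradiction. The set-up mirrors Step I of Proposition \ref{casealphageq1}, with the sign of the perturbation reversed: instead of a supersolution whose free boundary is pushed above $\{x_n = 0\}$, I would use a subsolution whose free boundary is pushed slightly below.

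Without loss of generality take $\nu = e_n$, fix $1 < \alpha' < \alpha$, and let $\varphi \in C^\infty_c(B_{1/2})$ with $0 \le \varphi \le 1$ and $\varphi \equiv 1$ on $B_{1/4}$. For small parameters $\xi > 0$ and $\varepsilon \in [0, \bar\varepsilon]$, I would set
\[
D_\varepsilon = B_1 \cap \{x_n > \xi - \varepsilon \varphi(x)\}
\]
and let $v_\varepsilon$ solve $\mathcal{M}^-_{\lambda_0,\Lambda_0}(D^2 v_\varepsilon) = \rho$ in $D_\varepsilon$, with $v_\varepsilon = \alpha'(x_n - \xi)$ on $\partial B_1 \cap \{x_n \ge \xi\}$ and $v_\varepsilon = 0$ on the rest of $\partial D_\varepsilon$, extending $v_\varepsilon \equiv 0$ outside $D_\varepsilon$. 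An argument parallel to the derivation of \eqref{wep-c1alpha}, using ABP together with the Pucci interior and boundary $C^{1,\tilde\alpha}$ estimates, gives $|\nabla v_\varepsilon - \alpha' e_n| \le C(\rho + \varepsilon)$ in $\overline{D_\varepsilon \cap B_{3/4}}$. Choosing $\rho, \bar\varepsilon, \xi$ small in a universal way depending only on $\alpha$ and $\alpha'$ sandwiches $|\nabla v_\varepsilon|$ strictly between $1$ and $\alpha$ throughout that region. Combining this with \eqref{p(x)-vs-pucci} and the smallness of $\|f_k\|_\infty$, $\|\nabla p_k\|_\infty$ and $\|g_k - 1\|_\infty$ for large $k$, one checks $\Delta_{p_k(x)} v_\varepsilon > f_k$ in $D_\varepsilon \cap B_{3/4}$ and $|\nabla v_\varepsilon| > g_k$ on $F(v_\varepsilon) \cap B_{3/4}$; thus $v_\varepsilon$ is a strict subsolution of \eqref{fb} in $B_{3/4}$.

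For the sliding step, at $\varepsilon = 0$ the function $v_0$ is supported in $\{x_n \ge \xi\}$ and bounded above by $\alpha'(x_n - \xi)^+ + O(\rho)$, so that in $B_1$,
\[
u_k - v_0 \ge (\alpha - \alpha') x_n^+ + \alpha' \xi - \|u_k - u_0\|_{L^\infty(B_1)} - O(\rho) > 0
\]
for large $k$ once $\xi$ dominates $\rho$ and $\|u_k - u_0\|_{L^\infty(B_1)}$. Conversely, at $\varepsilon^* = \xi + c \|u_k - u_0\|_{L^\infty(B_1)}$ with $c$ universal and still $\varepsilon^* < \bar\varepsilon$, the point $x = - c \|u_k - u_0\|_{L^\infty(B_1)} e_n$ belongs to $D_{\varepsilon^*}$ with $v_{\varepsilon^*}(x) \ge 2 \|u_k - u_0\|_{L^\infty(B_1)}$ while $u_k(x) \le \|u_k - u_0\|_{L^\infty(B_1)}$, so $v_{\varepsilon^*} > u_k$ somewhere. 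By continuity of $\varepsilon \mapsto v_\varepsilon$ in $C(\overline{B_1})$, there is a first $\varepsilon_k \in (0, \varepsilon^*)$ with $v_{\varepsilon_k} \le u_k$ in $B_1$ and a contact point $x_k$ where equality holds; the initial boundary-value gap rules out $x_k \in \partial B_1$.

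Finally, Lemma \ref{compare}, applied with the viscosity solution $u_k$ and the strict subsolution $v_{\varepsilon_k}$ (noting $u_k \ge v_{\varepsilon_k}^+$ in $B_{3/4}$), forces $u_k > v_{\varepsilon_k}$ on $\Omega^+(v_{\varepsilon_k}) \cup F(v_{\varepsilon_k})$. Since $x_k \in \overline{D_{\varepsilon_k}} \cap B_{3/4} \subset \Omega^+(v_{\varepsilon_k}) \cup F(v_{\varepsilon_k})$, this contradicts $u_k(x_k) = v_{\varepsilon_k}(x_k)$ and proves $\alpha \le 1$. The main obstacle is the two-sided calibration of $\rho, \xi, \bar\varepsilon$: they must simultaneously guarantee the strict subsolution character of $v_\varepsilon$ across the entire sliding range, preserve the initial separation from $u_k$, and produce an overtaking time before $\bar\varepsilon$. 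The last point is the most delicate, because, unlike in Proposition \ref{casealphageq1}, one has only the uniform convergence $u_k \to u_0$ at disposal, with no a priori Hausdorff convergence of the free boundaries, so the overtaking must be extracted directly from quantitative information on where $v_\varepsilon > 0$ versus where $u_k$ is small.
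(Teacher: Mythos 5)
Your proposed argument takes a genuinely different route from the paper's proof. The paper constructs a single explicit strict subsolution
\[
Q(x)=\Big(1+\tfrac{\eta}{2}\Big)x_n+\delta x_n^2-\varepsilon|x'|^2,\qquad \alpha=1+\eta,
\]
whose key feature is that it touches $u_0=\alpha x_n^+$ from below \emph{strictly} at the origin and only there (inequality \eqref{u0smallerQ}). A simple vertical translation $Q_k=Q+\sigma_k$, with $\sigma_k\to 0$ coming from $\|u_k-u_0\|_{L^\infty}\to 0$, then produces a contact point $x_k\in B_{\rho_0}$, which lands automatically in $\Omega^+(Q_k)\cup F(Q_k)$ because $\nabla Q_k\neq 0$. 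This sidesteps entirely the need for ABP/Pucci boundary estimates, the construction of a sliding family, and any localization of the contact set: everything is explicit and the contact is confined to a small ball by construction. You instead propose to mirror Proposition \ref{casealphageq1}'s sliding-family construction with the signs reversed, which is conceptually reasonable but considerably heavier.

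Beyond the extra machinery, there are concrete gaps in your sketch. First, your overtaking claim is miscomputed: at $\varepsilon^*=\xi+c\|u_k-u_0\|_{L^\infty(B_1)}$ the point $x=-c\|u_k-u_0\|_{L^\infty}e_n$ lies \emph{on} $\partial D_{\varepsilon^*}$ (since there $x_n=\xi-\varepsilon^*$ with $\varphi\equiv 1$), so $v_{\varepsilon^*}(x)=0$, not $\ge 2\|u_k-u_0\|_{L^\infty}$. To make overtaking work you must evaluate at an interior point (e.g.\ $x=0$), use the boundary gradient lower bound $\partial_n v_{\varepsilon^*}\gtrsim\alpha'-C(\rho+\varepsilon^*)$ to get $v_{\varepsilon^*}(0)\gtrsim \alpha'\,c\,\|u_k-u_0\|_{L^\infty}$, and then calibrate $c$ so this beats $u_k(0)\le\|u_k-u_0\|_{L^\infty}$. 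Second, and more seriously, you do not rule out that the first-contact point $x_k$ lies outside $\overline{D_{\varepsilon_k}}\cap B_{3/4}$. The definition of the sliding time must be taken relative to $\overline{D_\varepsilon}$ (or $\overline{\{u_k>0\}}$, as in Step~IV of Proposition \ref{casealphageq1}) to exclude spurious ``contacts'' in the interior of $\{v_{\varepsilon_k}=0\}\cap\{u_k=0\}$ where both vanish; and one then needs a separate argument, analogous to Steps II--III of Proposition \ref{casealphageq1} (involving auxiliary barriers and the equation for $u_k$ in $\{x_n\ge\delta\}$), to exclude contacts in the annulus $3/4\le|x|\le 1$. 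As you yourself note, here one lacks the Hausdorff convergence of $F(u_k)$ that was available in Proposition \ref{casealphageq1}, which is precisely why the paper avoids sliding altogether and works with a barrier whose strict contact is localized at a single point from the start.
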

\begin{proof} Without loss of generality we assume that $\nu=e_n$.
 Suppose by contradiction that $\alpha=1+\eta,$ with $\eta>0$, then
$$
u_0(x)= (1+\eta)x_n^+.
$$
For $\delta>0$ and $\ep>0$  small, to be precised later, we define
\begin{equation*}
{Q}(x): =(1+\frac{\eta}{2}) x_n +\delta x_n^2-\ep|x'|^2,
\end{equation*}
where we denote $x=(x',x_n)$, $x'\in \R^n$.

Let us show that 
\begin{equation}\label{u0smallerQ}
\left\{
\begin{aligned}
&u_0 > Q \quad \text{in } B_{\rho_0}\setminus\{0\},\\
&u_0(0)=Q(0),
\end{aligned}
\right.
\end{equation}
for some $\rho_0=\rho_0(\delta,\eta)>0$.
In fact, there holds that
\begin{equation*}
\begin{aligned}
&u_0(x)=(1+\eta)x_n^+>(1+\frac{\eta}{2}) x_n +\delta x_n^2\ge Q(x) \qquad \text{ for } \,0<|x_n|<\frac{\eta}{2\delta},\\
&u_0(x',0)=0>-\ep|x'|^2=Q(x',0)\qquad \text{ for } \, x'\neq 0,
\end{aligned}
\end{equation*}
so \eqref{u0smallerQ} follows for $\rho_0=\min\{1,\frac{\eta}{2\delta}\}$.

{\bf Claim.} We claim that, in $B_1$, ${Q}$ is a strict subsolution to problem \eqref{fb} with   right hand side $f_k$, exponent $p_k$ and free boundary condition $g_k$, for large $k$.

Indeed, we have
$$\nabla Q= (1+\frac{\eta}{2}) e_n +2Mx,\qquad D^2Q=2M,$$
where $M\in R^{n\times n}$ is given by
\begin{equation}\label{def-M}
M_{ij}=0 \, \text{ for } \, i\neq j\qquad M_{ii}=-\ep \, \text{ for } \, i\neq n, \qquad M_{nn}=\delta.
\end{equation}
Then,
\begin{equation}\label{bound-nabla-Q}
1+\frac{\eta}{4}\le|\nabla Q|\le 1+{\eta}\quad \text{ in } B_1,
\end{equation}
if $\delta\le {\eta}/{8}$ and $\ep\le {\eta}/{8}$.

 Moreover, applying the lower bound in \eqref{p(x)-vs-pucci}, we obtain
\begin{equation}\label{p(x)-Q}
\begin{split}
&\Delta_{p_k(x)}{Q}(x)\geq |\nabla Q|^{p_k(x)-2}\mathcal{M}_{\lambda_0,\Lambda_0}^-(D^2 Q)+|\nabla Q|^{p_k(x)-2}\langle \nabla Q,\nabla p_k(x)\rangle\log|\nabla Q|\\
&\ge |\nabla Q|^{p_k(x)-2} 2\mathcal{M}_{\lambda_0,\Lambda_0}^-(M) - |\nabla p_k(x)||\nabla Q|^{p_k(x)-1}\log|\nabla Q|.
\end{split}
\end{equation}
We also observe that \eqref{bound-nabla-Q} implies 
\begin{equation}\label{bound-nabla-Q2}
|\nabla Q|^{p_k(x)-2}\ge c_1 \qquad |\nabla Q|^{p_k(x)-1}\log|\nabla Q|\le c_2,
\end{equation}
in $B_1$, where $c_1=c_1(\eta, p_{\min})>0$ and $c_2=c_2(\eta,  p_{\max})>0$.

Now, from \eqref{def-M} and \eqref{def-pucci} it is not hard to see that  
\begin{equation}\label{pucci-M}
\mathcal{M}_{\lambda_0,\Lambda_0}^-(M)=-\Lambda_0 \ep (n-1)+\lambda_0 \delta\ge \frac{\lambda_0 \delta}{2},
\end{equation}
if $\ep\le \frac{\lambda_0 \delta}{2\Lambda_0 (n-1)}$. We next take $k$ large enough so that
\begin{equation}\label{bound-pk-fk}
|\nabla p_k|\le \frac{\lambda_0 \delta c_1}{2 c_2},\qquad |f_k|\le \frac{c_1\lambda_0 \delta}{4}\quad \text{ for }x\in B_1.
\end{equation}
Putting together \eqref{p(x)-Q}, \eqref{bound-nabla-Q2}, \eqref{pucci-M} and \eqref{bound-pk-fk}, we obtain in $B_1$
\begin{equation*}
\begin{split}
&\Delta_{p_k(x)}{Q}(x)\geq c_1 2\mathcal{M}_{\lambda_0,\Lambda_0}^-(M) - |\nabla p_k(x)|c_2\\
&\geq c_1 \lambda_0 \delta - \frac{\lambda_0 \delta c_1}{2 c_2}c_2>f_k.
\end{split}
\end{equation*}
If, additionally, $k$ is large so that
$$g_k\le 1+\frac{\eta}{8},\quad \text{ for }x\in B_1,$$
we obtain from \eqref{bound-nabla-Q} that
$$|\nabla Q|>g_k\quad \text{ in } B_1,$$
thus proving our claim.

We finally deduce from \eqref{u0smallerQ} that there exist a sequence $\sigma_k\to 0$ and points $x_k\in B_{\rho_0}$ such that, denoting $Q_k=Q+\sigma_k$, we get
\begin{equation*}
\left\{
\begin{aligned}
&u_k \ge Q_k \quad \text{in } B_{\rho_0},\\
&u_k(x_k)=Q(x_k),
\end{aligned}
\right.
\end{equation*}
if $k$ is large. We notice that if $u_k(x_k)>0$, then $Q_k(x_k)>0$. Otherwise $u_k(x_k)=0=Q_k(x_k)$, and since $\nabla Q_k(x_k)\neq 0$, then $x_k\in F(Q_k)$.

That is, for large $k$, ${Q}_k$ is a strict subsolution in $B_{\rho_0}$ to problem \eqref{fb}, with right hand side $f_k$, exponent $p_k$ and free boundary condition $g_k$, touching $u_k$ from  below at $x_k\in B^+_{\rho_0}(Q_k)\cup F(Q_k)$, a contradiction. Then $\alpha\leq 1.$ 
\end{proof}

We are now in a position to prove Theorem \ref{Lipmain}.

\medskip

\begin{proof}[\bf Proof of Theorem \ref{Lipmain}]
Let $u$ be a viscosity solution to \eqref{fb}
in $B_1$ such that $0\in F(u)$ and  such that $F(u)$ is a Lipschitz graph in $B_{r_0}$, for some $0<r_0\le 1$. Without loss of generality we assume that $g(0)=1$ and we denote $p(0)=p_0.$   

We will divide the proof into several steps.

\medskip

{\it Step I. Lipschitz continuity and nondegeneracy}. Let us first show that $u$ is Lipschitz and nondegenerate in a neighborhood of $0$.

In fact, for $0<r\le \frac{r_0}{2}\le \frac{1}{2}$, we consider the function  
$$\bar{u}(x) = \frac{1}{r} u(r x), \quad x\in B_2.$$

Then $\bar{u}$ is a viscosity solution to \eqref{fb} in $B_2$,  with right hand side $\bar{f}(x) = r f(r x)$, exponent $\bar{p}(x) =  p(r x)$ and 
free boundary condition  $\bar{g}(x) =  g(r x)$. Moreover, $0\in F(\bar{u})$.

{}From Theorem \ref{Lip-contin} we know that $\bar{u}$ is Lipschitz continuous in $B_{1/2}$ with a Lipschitz constant depending only on
 $n$, $p_{\min}$, $p_{\max},$  $\|\nabla p\|_{L^{\infty}(B_{3r_0/8})}$, $ \|f\|_{L^{\infty}(B_{3r_0/8})}$,  $\beta$, 
$\|g\|_{C^{0,\beta}(\overline{B_{3r_0/8}})}$ and $ \|u\|_{L^{\infty}(B_{3r_0/8})}$.

In order to prove the nondegeneracy, let us see that we can apply the second part of Proposition \ref{lemma7-1} to ${\bar u}$, if $r$ is suitably chosen.

For that purpose, let us first show that the constants appearing in that proposition can be taken independent of $r$. More precisely, we want to 
find a bound independent of $r$ for
$$
||\bar{u}||_{L^{\infty}(B_{3/2})}^{\bar{p}_{+}^{3/2}-\bar{p}_{-}^{3/2}},\qquad \text{where} \quad \bar{p}_{+}^{3/2}=\sup_{B_{3/2}}\bar{p}, \ \ \bar{p}_{-}^{3/2}=\inf_{B_{3/2}}\bar{p}.
$$
In fact, we have
\begin{equation}\label{bound-norm-bu-1}
{||\bar{u}||}_{L^{\infty}(B_{3/2})}^{\bar{p}_{+}^{3/2}-\bar{p}_{-}^{3/2}}\le ||u||_{L^{\infty}(B_{3r_0/4})}^{\bar{p}_{+}^{3/2}-
\bar{p}_{-}^{3/2}} \Big(\frac{1}{r}\Big)^{\bar{p}_{+}^{3/2}-\bar{p}_{-}^{3/2}},
\end{equation}
and
\begin{equation}\label{bound-norm-bu-2}
{\bar{p}_{+}^{3/2}-\bar{p}_{-}^{3/2}}\le 3 {||\nabla \bar{p}||}_{L^{\infty}(B_{3/2})} \le  3r{||\nabla {p}||}_{L^{\infty}(B_{3r_0/4})}.
\end{equation}
Then, from \eqref{bound-norm-bu-1} and \eqref{bound-norm-bu-2},  we conclude that
\begin{equation*}
||\bar{u}||_{L^{\infty}(B_{3/2})}^{\bar{p}_{+}^{3/2}-\bar{p}_{-}^{3/2}}\le 
C=C\big(||u||_{L^{\infty}(B_{3r_0/4})}, {||\nabla {p}||}_{L^{\infty}(B_{3r_0/4})}\big).
\end{equation*}

It follows that in order to apply the second part of  Proposition \ref{lemma7-1} to $\bar{u}$ we can take the constants $\tilde{\ep}$ and $c_0$  in that proposition depending only on
 $n$, $p_{\min}$, $p_{\max}$,  $||u||_{L^{\infty}(B_{3r_0/4})}$, ${||\nabla {p}||}_{L^{\infty}(B_{3r_0/4})}$, $||g||_{L^{\infty}(B_{r_0})}$, 
$\gamma_0$ and on the Lipschitz constant of $F(u)$.

Then, if $r$ is small enough, there holds in $B_2$
\begin{equation*}
\begin{aligned}
|\bar{f}(x)|&\le r ||f||_{L^{\infty}(B_{r_0})} \le \tilde{\ep},\\
|\bar{g}(x)-1|&=|g(r x)-g(0)|\le 2 {r}^{\beta} [g]_{C^{0,\beta}(B_{r_0})} \le \tilde{\ep},\\
|\nabla \bar{p}(x)|&\le r ||\nabla p||_{L^{\infty}(B_{r_0})}\le \tilde{\ep},\\
|\bar{p}(x)-p_0|&=|p(r x)-p(0)|\le 2 r ||\nabla p||_{L^{\infty}(B_{r_0})}\le \tilde{\ep}.
\end{aligned}
\end{equation*}
Hence, for $r$ small enough, $\bar{u}$ is  nondegenerate in $B_{\rho_0}$, for $\rho_0>0$ depending only on the Lipschitz constant of $F(u)$.

That is, $u$ is Lipschitz continuous and nondegenerate in $B_{\hat{\rho_0}}$, for a suitable universal $\hat{\rho_0}>0$, with a universal Lipschitz constant
$L_0$.

\medskip

{\it Step II. Blow up limit}. We now consider the blow up sequence
\begin{equation}\label{defuk}
u_k(x)=u_{\delta_k}(x)=\frac{u(\delta_k x)}{\delta_k},\quad \text{ where }\delta_k\to 0,
\end{equation}
$\delta_k> 0$. As before, each $u_k$ is a viscosity solution to \eqref{fb} with right hand side 
$f_k(x) = \delta_kf(\delta_kx)$, exponent $p_k(x) =  p(\delta_k x)$ and 
free boundary condition  $g_k(x) =  g(\delta_kx)$. 

Our goal is to apply Theorem \ref{flatmain1} to $u_k$, for large $k$. We will first observe that, taking $k$ sufficiently large, the assumption
\eqref{pflat} in that theorem is satisfied for the universal constant $\bar{\ep}$. In fact, in $B_1$,
\begin{equation}\label{condiconv}\begin{split}
&|f_k(x)|=\delta_k|f(\delta_k x)|\leq \delta_k ||f||_{L^{\infty}(B_{r_0})}\leq\bar{\varepsilon},\\
&|\nabla p_k(x)|\le \delta_k ||\nabla p||_{L^{\infty}(B_{r_0})}\le \bar\ep,\\
&|p_{{k}}(x)-p_0|=|p(\delta_k x)-p(0)|\leq\delta_k||\nabla p||_{L^{\infty}(B_{r_0})}\leq \bar{\varepsilon}, \\
&[g_k]_{C^{0,\beta}(B_1)}\leq\delta_k^{\beta}[g]_{C^{0,\beta}(B_{r_0})}\leq\bar{\varepsilon},\\
&|g_{{k}}(x)-1|=|g(\delta_k x)-g(0)|\leq\delta_k^{\beta}[g]_{C^{0,\beta}(B_{r_0})}\leq\bar{\varepsilon}. 
\end{split}
\end{equation}

On the other hand, since $u$ is Lipschitz and nondegenerate in $B_{\hat{\rho_0}}$, with Lipschitz constant $L_0$, then, for every $R>0$, $u_k$ are Lipschitz and uniformly nondegenerate in $B_{R}$,  with Lipschitz constant $L_0$, if $k\ge k_0(R)$. Then, standard arguments (see for instance, \cite{AC}, 4.7) imply that (up to  a subsequence), there holds that
\begin{equation}\label{ukconvu0}
\begin{aligned}
& u_k  \to u_0  \ \text{in} \ C_{\rm loc}^{0,\gamma}(\R^n),\  \text{for all}\  0<\gamma<1,\\
& \partial\{u_{k}>0\}\to\partial\{u_0>0\} \ \text{locally in Hausdorff distance},
\end{aligned}
\end{equation}
to a  function $u_0:\R^n\to \R$, which is globally Lipschitz with constant $L_0$ and nondegenerate in $\R^n$. Moreover, $F(u_0)$ is a global Lipschitz graph.

We also observe that the estimates in \eqref{condiconv} also imply that
\begin{equation*}
f_k\to 0, \quad \nabla p_k\to 0,\quad p_k\to p_0,\quad  g_k\to 1,\quad \text{ uniformly on compacts of } \ \R^n. 
\end{equation*}

\medskip

{\it Step III. Limit equation}. Since $u$ satisfies in the viscosity sense  $\Delta_{p(x)}u=f$ in $\{u>0\}$, then every $u_k$ satisfies in the viscosity sense $\Delta_{p_k(x)}u_k=f_k$ in $\{u_k>0\}$. 
We claim that the  blow up limit $u_0$
is a viscosity solution to $\Delta_{p_0}u_0=0$ in $\{u_0>0\}$.

 In fact, let us see that $u_0$ is a viscosity subsolution to $\Delta_{p_0}u_0=0$ in $\{u_0>0\}$.

Let $x_0 \in \{u_0>0\}$ and let $P$ be a quadratic polynomial such that $P\geq u_0$ in $B_\sigma(x_0),$  $P(x_0)=u(x_0)$  and $\nabla P(x_0)\not=0$. We can assume that $|\nabla P|\ge c>0$ in $B_\sigma (x_0)$ and $B_\sigma (x_0)\subset\{u_k>0\}$ for $k$ large, so 
$\Delta_{p_k(x)}u_k(x)=f_k(x)$ in $B_{\sigma}(x_0)$. We want to prove that $\Delta_{p_0}P(x_0)\geq 0$. We argue by contradiction assuming that there exists $\rho>0$ such that
$\Delta_{p_0}P(x_0)<-\rho<0.$ For $\varepsilon>0$, we define $\tilde{P}(x)=\tilde{P}_{\ep}(x)=P(x)+\varepsilon |x-x_0|^2.$ Hence $\nabla \tilde{P}=\nabla P+2\varepsilon (x-x_0)$  and 
\begin{equation}\label{graddue}|\nabla \tilde{P}|\geq \frac{c}{2} \quad\text{  in }B_{{\sigma}}(x_0),
\end{equation}
 if $\varepsilon$ is sufficiently small. Letting $\varepsilon\to 0$, we get
$$
\Delta_{p_0}\tilde{P}(x_0)\to \Delta_{p_0}P(x_0)<-\rho
$$
and then, if $\varepsilon$ is small enough, we obtain
\begin{equation}\label{gradtre}
\Delta_{p_0}\tilde{P}(x_0)<-\frac{\rho}{2}.
\end{equation}
We now fix $\varepsilon>0$ small  such that \eqref{graddue} and \eqref{gradtre} hold. We have 
\begin{equation}\label{tildePgeu0}
\tilde{P}(x)>u_0(x) \ \text{ in } \overline{B}_{{\sigma}}(x_0)\setminus\{x_0\}, \ \text{ and }\tilde{P}(x_0)=u_0(x_0).
\end{equation} 
Moreover, since $u_k\to u_0$ uniformly in 
$B_{\sigma}(x_0),$ then 
$|u_k-u_0|<\gamma_k$ in $B_{\sigma}(x_0)$ with $\gamma_k\to 0.$ 
Hence, from
$$
\tilde{P}(x)\geq u_0(x)>u_k(x)-\gamma_k \quad \text{ in }B_{{\sigma}}(x_0),
$$
it follows
$$
\tilde{P}(x)+\gamma_k>u_k(x)\quad \text{ in } B_{{\sigma}}(x_0).
$$

Let
 $$
t_k=\sup\{t\geq 0:\quad \tilde{P}(x)+\gamma_k\geq u_k(x)+t\quad\mbox{in}\:\:B_{{\sigma}}(x_0)\}.
$$
Since $\gamma_k\to 0$ and $\tilde{P}(x)+\gamma_k$ is bounded in $B_{{\sigma}}(x_0),$ then $t_k$ is finite so that 
$$
\tilde{P}(x)+\gamma_k\geq u_k(x)+t_k \quad \text{ in }B_{{\sigma}}(x_0)
$$
 and there exists $x_k\in \overline{B}_{{\sigma}}(x_0)$ such that
$$
\tilde{P}(x_k)+\gamma_k=u_k(x_k)+t_k.
$$
Then
\begin{equation*}
\begin{split}
u_k(x_0)+\gamma_k+\gamma_k\geq u_0(x_0)+\gamma_k=\tilde{P}(x_0)+\gamma_k\geq u_k(x_0)+t_k.
\end{split}
\end{equation*}
As a consequence
$$
t_k\leq 2\gamma_k\to 0
$$
and $t_k\to 0.$ Let $\tilde{P}_k(x)=\tilde{P}(x)+\gamma_k-t_k.$

Then 
$$
\tilde{P}_k(x)\geq u_k(x)\quad \text{ in }B_{{\sigma}}(x_0)
$$
 and $\tilde{P}_k(x_k)=u_k(x_k)$ for $x_k\in \overline{B}_{{\sigma}}(x_0).$

Since $\tilde{P}(x)>u_0(x)$ on $\partial B_{{\sigma}}(x_0)$
then 
$$
\tilde{P}(x)-u_0(x)\geq \bar{c}>0 
$$
on $\partial B_{{\sigma}}(x_0)$ and
\begin{equation*}
\begin{split}
&\tilde{P}_k(x)-u_k(x)=\tilde{P}(x)+\gamma_k-t_k-u_k(x)\geq \tilde{P}(x)+\gamma_k-t_k-u_0(x)-\gamma_k\\
&=\tilde{P}(x)-t_k-u_0(x)\geq \bar{c}-t_k\geq \frac{\bar{c}}{2}
\end{split}
\end{equation*}
on $\partial B_{{\sigma}}(x_0)$ if $k\geq k_0,$ since $t_k\to 0.$ We recall here that $u_k\leq u_0+\gamma_k.$ Hence $x_k\not\in\partial 
B_{{\sigma}}(x_0)$ if $k\geq k_0.$
Then
$$
\tilde{P}_k(x)\geq u_k(x)\quad \text{ in }B_{{\sigma}}(x_0),
$$
 $\tilde{P}_k(x_k)=u_k(x_k)$ for $x_k\in B_{{\sigma}}(x_0)$, and 
$\nabla\tilde{P}_k\not=0$ in $B_{{\sigma}}(x_0)$ and thus,
\begin{equation}\label{ineqPk}
\Delta_{p_k(x_k)}\tilde{P}(x_k)=\Delta_{p_k(x_k)}\tilde{P}_k(x_k)\geq f_k(x_k).
\end{equation}
Since $x_k\in B_{{\sigma}}(x_0)$ then, for a subsequence, $x_k\to \bar{x}\in \overline{B}_{{\sigma}}(x_0)$. Hence, using that
 $\gamma_k\to 0$,  $t_k\to 0$ and
$$
\tilde{P}({x_k})+\gamma_k-t_k=\tilde{P}_k(x_k)=u_k(x_k),
$$
we obtain that $\tilde{P}(\bar{x})=u_0(\bar{x}).$
 Then $\bar{x}=x_0,$ because \eqref{tildePgeu0} holds.

Now,  letting $k\to \infty$ in \eqref{ineqPk}, we get 
$$
\Delta_{p_0}\tilde{P}(x_0)\geq 0,
$$
which gives a contradiction to \eqref{gradtre}. Hence $\Delta_{p_0}P(x_0)\geq 0.$

Arguing in a similar way, we deduce  that $u_0$ is a viscosity supersolution to $\Delta_{p_0}u_0=0$ in $\{u_0>0\}$ as well.

\medskip

{\it Step IV. Limit free boundary problem}. We want to show that $u_0$ is a viscosity solution (in the sense of Definition \ref{def-LN}) to problem   
\begin{equation}  \label{fbliminftybis}
\left\{
\begin{array}{ll}
\Delta_{p_0} u_0 = 0, & \hbox{in $\{u_0>0\}$}, \\
\  &  \\
|\nabla u_0|= 1, & \hbox{on $F(u_0).$} 
\end{array}
\right.
\end{equation}
Hence we have to check that free boundary condition is satisfied in the sense of (i) and (ii) of that definition. We divide our analysis into two cases.

\medskip

{\bf Case (a).} Let $x_0\in F(u_0)$ such that there exists a ball $B_{r}(y)\subset \{u_0>0\}$, with $x_0\in \partial B_r(y)$. We denote 
$\nu=\frac{y-x_0}{|y-x_0|}$. Then, by Case (a) in Lemma \ref{lemma51},
\begin{equation}\label{asymp-right}
u_0(x)=\alpha \langle x-x_0,\nu\rangle^++o(|x-x_0|)\quad \text{ in } B_r(y),
\end{equation}
with $\alpha>0.$

 We now consider a sequence $\lambda_j\to 0,$   $\lambda_j> 0$. Since $u_0$ is Lipschitz in $\R^n$ then there exists a function
$u_{00}$ such that, for a subsequence, 
$$
\frac{u_0(x_0+\lambda_j x)}{\lambda_j}\to u_{00}(x)\quad \text {uniformly on compact sets of }\R^n.
$$
{}From \eqref{asymp-right} we know that $u_{00}(x)=\alpha \langle x,\nu\rangle^+$ in $\{\langle x,\nu\rangle\geq 0\}.$ 
Then $\{\langle x,\nu\rangle = 0\} \subset F(u_{00})$. Since $F(u_0)$ is a Lipschitz graph, also $F(u_{00})$ is a Lipschitz graph,  so we have  
$\{\langle x,\nu\rangle = 0\} = F(u_{00}).$ Hence,
$$
u_{00}(x)=\alpha \langle x,\nu\rangle^+ \quad \text{ in }\mathbb{R}^n.
$$

This result holds for any sequence $\lambda_j\to 0,$   $\lambda_j> 0,$ therefore 
\begin{equation}\label{asympu0}
u_0(x)=\alpha \langle x-x_0,\nu\rangle^++o(|x-x_0|)\quad \text{ in } \R^n,
\end{equation}
with $\alpha>0.$ 

We want to show that $\alpha=1$.

Since $x_0\in F(u_0)$ and recalling   \eqref{defuk} and \eqref{ukconvu0}, we know that there exists, up to a subsequence, 
\begin{equation*}
x_k\in F(u_k), \quad |x_k-x_0|<1/k.
\end{equation*}
We fix $R>0$ such that $|x_0|<R$ and let $\mu_j=1/\sqrt{j}$. 

For each $j$ there exists $k_j\ge j$ such that 
$$
|u_{{k_j}}(x)-u_0(x)|\le \frac{\mu_j}{j} \qquad\text{for }x\in B_{R+2}.
$$   
We now define
$$(u_{k_j})_{\mu_j}(x)=\frac1{\mu_j}{u_{k_j}(x_{k_j}+\mu_j x)}, \quad (u_{0})_{\mu_j}(x)=\frac1{\mu_j}{u_0(x_{k_j}+\mu_j x)}.$$
Then, if $j\ge j_0$,
$$
|(u_{k_j})_{\mu_j}(x)-(u_{0})_{\mu_j}(x)|=  \frac{|u_{k_j}(x_{k_j}+\mu_j x) - u_0(x_{k_j}+\mu_j x)|}{\mu_j}\le \frac{1}{j} \qquad\text{for }x\in B_{2}.
$$
We now observe that 
$$\frac{|x_{k_j}-x_0|}{\mu_j}<\frac{1/k_j}{\mu_j}\le \frac{1}{\sqrt{j}}\quad \to 0,$$
and, recalling  \eqref{asympu0}, we obtain
$$
(u_{0})_{\mu_j}(x)\to u_{00}(x)=\alpha \langle x,\nu\rangle^+ \quad \text{ uniformly in } B_2.
$$
Then,
$$
\begin{aligned}
|(u_{k_j})_{\mu_j}(x)-u_{00}(x)|\le &|(u_{k_j})_{\mu_j}(x)-(u_{0})_{\mu_j}(x)|\\
& +|(u_{0})_{\mu_j}(x) - u_{00}(x)| \to 0 \quad \text{ uniformly in } B_2.
\end{aligned}
$$
Denoting  $\rho_j=\delta_{k_j}\mu_j$, $\bar{x}_j=\delta_{k_j}x_{k_j}$ and $u_{\rho_j}(x)=\frac 1{\rho_j}u(\bar{x}_j+\rho_j x)=(u_{k_j})_{\mu_j}(x)$, we get
\begin{equation*}
\begin{aligned}
\rho_j\to 0, \quad &\bar{x}_j \in F(u),\quad \bar{x}_j\to 0,\\
 u_{\rho_j}(x)=\frac 1{\rho_j}u(\bar{x}_j+\rho_j x)&\to \alpha \langle x,\nu\rangle^+ \quad \text{ uniformly in } B_2.
\end{aligned}
\end{equation*}
Reasoning as in {\it Step II}, we see that each $u_{\rho_j}$ is a viscosity solution to \eqref{fb} in $B_2$ with right hand side 
$\bar{f}_j(x) = \rho_j f(\bar{x}_j+\rho_j x)$, exponent $\bar{p}_j(x) =  p(\bar{x}_j+\rho_j x )$ and 
free boundary condition  $\bar{g}_j(x) =  g(\bar{x}_j+\rho_j  x)$,
\begin{equation*}
\bar{f}_j\to 0, \quad \nabla\bar{p}_j\to 0,\quad \bar{p}_j\to p_0,\quad  \bar{g}_j\to 1,\quad \text{ uniformly in }B_2. 
\end{equation*}
Moreover, $u_{\rho_j}$ are uniformly Lipschitz and nondegenerate in $B_2$ for $j\ge j_0$, $\partial\{ u_{\rho_j}>0\}$ are uniform Lipschitz graphs and $\partial\{ u_{\rho_j}>0\}\to \{\langle x,\nu\rangle=0\}$ in Hausdorff distance in $B_2$.

Now, applying Propositions \ref{casealphageq1} and \ref{alpha-leq-1} to the sequence $u_{\rho_j}$, we deduce that $\alpha=1$. Then, (i) in Definition \ref{def-LN} is satisfied in this case.

\medskip

{\bf Case (b).} Let $x_0\in F(u_0)$ such that there exists a ball $B_{r}(y)\subset \{u_0\equiv 0\}$, with $x_0\in \partial B_r(y)$. We denote 
$\nu=\frac{x_0-y}{|x_0-y|}$. Then, from the proof of Case (b) in Lemma \ref{lemma51}, we get
\begin{equation}\label{asymp-left}
u_0(x)=\alpha \langle x-x_0,\nu\rangle^++o(|x-x_0|)\quad \text{ in } B^c_r(y),
\end{equation}
with $\alpha \ge 0.$

 We now consider a sequence $\lambda_j\to 0,$   $\lambda_j> 0$. Then, for a subsequence and a function $u_{00}$,
$$
\frac{u_0(x_0+\lambda_j x)}{\lambda_j}\to u_{00}(x)\quad \text {uniformly on compact sets of }\R^n.
$$
{}From \eqref{asymp-left} we know that $u_{00}(x)=\alpha \langle x,\nu\rangle^+$ in $\{\langle x,\nu\rangle\geq 0\}.$ 
Since $B_{r}(y)\subset \{u_0\equiv 0\}$, we have $u_{00}(x)=0$ in $\{\langle x,\nu\rangle\leq 0\}.$ 
 Hence,
$$
u_{00}(x)=\alpha \langle x,\nu\rangle^+ \quad \text{ in }\mathbb{R}^n.
$$
Now, if $\alpha=0$, then $u_{00}\equiv 0$ in $\mathbb{R}^n$. This contradicts that $F(u_{00})$ is a Lipschitz graph and shows that $\alpha>0$. 

Since this result holds for any sequence $\lambda_j\to 0,$   $\lambda_j> 0,$ we conclude that 
$$
u_0(x)=\alpha \langle x-x_0,\nu\rangle^++o(|x-x_0|)\quad \text{ in } \R^n,
$$
with $\alpha>0.$  Now proceeding as in Case (a), we obtain that $\alpha=1$. Then, (ii) in Definition \ref{def-LN} is satisfied in the present case. 

This shows that $u_0$ is a viscosity solution to problem \eqref{fbliminftybis} in the sense of Definition \ref{def-LN}.

\medskip

{\it Step V. Conclusion}. We have proved that $u_0$ is a viscosity solution (in the sense of Definition \ref{def-LN}) to   
\eqref{fbliminftybis} that is Lipschitz continuous and $F(u_0)$ is a Lipschitz graph.

Thus, from Lemma \ref{fbliminftylinlemma} it follows that, up to a rotation, $u_0(x)=x_n^+.$ Then for sufficiently large $k$ we have that, in $B_1$,
\begin{equation}\label{flat-cond-uk}
(x_n-\bar{\varepsilon})^+\leq u_{k}(x)\leq (x_n+\bar{\varepsilon})^+,
\end{equation}
for $\bar{\ep}$ the universal constant in Theorem \ref{flatmain1}. Recalling \eqref{condiconv}, we deduce that  Theorem \ref{flatmain1} applies and, as a consequence, we conclude that the free boundaries of $u_{k}$ as well as that of $u$ are $C^{1,\alpha}$, in a neighborhood of $0$. 
\end{proof}

As a by-product of Theorems \ref{flatmain1} and  \ref{Lipmain}, we obtain further regularity results for $F(u)$ under additional regularity assumptions on the data.  

\begin{cor}\label{higher-reg-F(u)}
Let $u$ be as in Theorem \ref{flatmain1} or as in Theorem \ref{Lipmain}.
Assume moreover that $p\in C^2(B_1)$, $f\in C^1(B_1)$ and
$g\in C^2(B_1)$, then there exists $\delta>0$ such that
$B_{\delta}\cap F(u)\in C^{2,\sigma}$ for every
$0<\sigma<1$. If $p\in C^{m+1,\sigma}(B_1)$, $f\in C^{m,\sigma}(B_1)$
and $g\in C^{m+1,\sigma}(B_1)$ for some $0<\sigma<1$ and
$m\ge1$, then $B_{\delta}\cap F(u)\in C^{m+2,\sigma}$.

Finally, if $p$, $f$ and $g$ are analytic in $B_1$, then
$B_{\delta}\cap F(u)$ is analytic.
\end{cor}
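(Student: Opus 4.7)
The plan is a bootstrap argument starting from the $C^{1,\alpha}$ regularity of $F(u)$ supplied by Theorem \ref{flatmain1} or Theorem \ref{Lipmain}. The key observation is that on $F(u)$ one has $|\nabla u| = g \ge \gamma_0 > 0$, so by continuity $\nabla u$ is bounded away from zero in a one-sided neighborhood of $F(u)$ inside $\overline{\Omega^+(u)}$. Consequently the $p(x)$-Laplacian, evaluated on $u$, reduces to a quasilinear uniformly elliptic non-divergence operator whose coefficients are smooth functions of $\nabla u$, $p(x)$ and $\nabla p(x)$: the factor $|\nabla u|^{p(x)-2}$ and the logarithmic correction $\langle \nabla p,\nabla u\rangle \log|\nabla u|$ are harmless because $|\nabla u|$ stays in a fixed compact interval bounded away from $0$.

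First I would fix local coordinates in which $F(u) = \{x_n = \psi(x')\}$ with $\psi \in C^{1,\alpha}$ and flatten the free boundary by a local $C^{1,\alpha}$ diffeomorphism $\Phi$ (equivalently, apply a partial hodograph transformation \`a la Kinderlehrer--Nirenberg, using that $u_{x_n}$ is bounded below). The pulled-back function $v = u \circ \Phi^{-1}$ is defined on a half-ball $B_r^+$, vanishes on $\{x_n = 0\}$, satisfies a quasilinear uniformly elliptic PDE
\begin{equation*}
\widetilde{F}(D^2 v,\nabla v,x) = \widetilde{f}(x) \quad \text{in } B_r^+,
\end{equation*}
together with a nonlinear oblique boundary condition
\begin{equation*}
G(\nabla v(x',0),x') = \widetilde{g}(x') \quad \text{on } \{x_n=0\}.
\end{equation*}
Initially the coefficients of $\widetilde{F}$ and the function $G$ lie in $C^{\alpha}$, while $\widetilde{f} \in C^0$ and $\widetilde{g}\in C^{0,\beta}$.

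Next I would invoke the boundary Schauder theory for nonlinear oblique problems (Lieberman) to upgrade $v$ to $C^{2,\sigma}$ up to $\{x_n=0\}$ as soon as the data are regular enough. Solving the boundary identity $G(\nabla v(x',0),x') = \widetilde{g}(x')$ for $\psi$ via the implicit function theorem then lifts the regularity of $\psi$ by one derivative, giving $F(u) \in C^{2,\sigma}$ under the hypotheses $p\in C^2$, $f\in C^1$, $g \in C^2$. An induction on $m$, each step using the improved $\psi$ to improve $\Phi$, and therefore $\widetilde{F}$, $\widetilde{g}$, and finally $v$ via the higher-order Schauder estimate, yields $F(u) \in C^{m+2,\sigma}$ under the stated assumptions. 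In the fully analytic case I would appeal to the analytic hodograph method of Kinderlehrer--Nirenberg--Spruck (or Friedman): since $|\nabla u| \neq 0$ near $F(u)$, $\widetilde{F}$ depends analytically on all its arguments, so the classical analyticity theorem applies.

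The main delicate point will be verifying that the transformed operator $\widetilde{F}$ genuinely fits the structural hypotheses of Lieberman's theory — in particular, uniform ellipticity in a neighborhood of the admissible range of $\nabla v$ — and carefully tracking the dependence of $\widetilde{F}$ and $\widetilde{g}$ on $p$, $\nabla p$ and the diffeomorphism $\Phi$. Because $|\nabla u| \ge c > 0$ near $F(u)$ this is essentially bookkeeping rather than a substantive analytic obstruction, but it is the step where the variable exponent genuinely enters and must be handled with some care.
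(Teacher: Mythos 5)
Your proposal is correct and, in substance, the same route the paper takes: the paper's proof is a one-line citation to Theorem 2 of Kinderlehrer--Nirenberg \cite{KN}, and what you wrote is precisely the hodograph/flattening-plus-elliptic-bootstrap argument that underlies that theorem, with the key enabling observation (that $|\nabla u|=g\ge\gamma_0>0$ on $F(u)$, hence the $p(x)$-Laplacian is uniformly elliptic near the free boundary) correctly identified. The only difference is presentational: the paper treats \cite{KN} as a black box, whereas you unwind it.
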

\begin{proof} The result follows from the application of Theorem 2 in \cite{KN}.
\end{proof}

\section{Some consequences}\label{conseq}

In this section we discuss some consequences of our results.

As already mentioned, in \cite{LW2} problem \eqref{fb} was considered for weak solutions, which is a different notion of solution from the one we are considering here (see Definition \ref{weak-lw} below). One of the consequences of our Theorem \ref{Lipmain} is an analogous
result for weak solutions (Corollary \ref{Lip-lw2}).

The notation and the assumptions on $\Omega, p, f$ and $g$ will be the same as in the rest of the paper (see Subsection \ref{assump} and Section \ref{section2}). In particular we will use the notation $\Omega^+(u)$ and $F(u)$ in \eqref{def-fb}. 

\smallskip

We first have 

\begin{defn}[Definition 2.2 in \cite{LW2}]\label{weak-lw}  
We call $u$ a weak solution of \eqref{fb} in $\Omega$ if
\begin{enumerate}
\item $u$ is continuous and nonnegative in $\Omega$, $u\in W^{1,p(\cdot)}(\Omega)$ and
$\Delta_{p(x)}u=f$ in $\Omega^+(u)$ (in the sense of Definition \ref{defnweak}).
\item For
$D\subset\subset \Omega$ there are constants $c_{\min}=c_{\min}(D)$, $C_{\max}=C_{\max}(D)$, $r_0=r_0(D)$, $0< c_{\min}\leq
C_{\max}$, $r_0>0$, such that for balls $B_r(x)\subset D$ with $x\in
F(u)$ and $0<r\le r_0$
$$
c_{\min}\leq \frac{1}{r}\sup_{B_r(x)} u \leq C_{\max}.
$$
\item For $\mathcal{H}^{n-1}$ a.e.
$x_0\in\partial_{\rm{red}}\{u>0\}$ (that is, for ${\mathcal
H}^{n-1}$-almost every point $x_0\in F(u)$ such that
$F(u)$ has an exterior  unit normal
 $\nu(x_0)$ in the measure theoretic sense)
$u$ has the asymptotic development
\begin{equation*}\label{asym-w}
u(x)=g(x_0)\langle x-x_0,\nu(x_0)\rangle^-+o(|x-x_0|).
\end{equation*}

\item For every $ x_0\in F(u)$,
\begin{align*}
& \limsup_{\stackrel{x\to x_0}{u(x)>0}} |\nabla u(x)| \leq
g(x_0).
\end{align*}

If there is a ball $B\subset\{u=0\}$ touching
$F(u)$ at $x_0$ then,
$$\limsup_{\stackrel{x\to x_0}{u(x)>0}} \frac{u(x)}{\mbox{dist}(x,B)}\geq  g(x_0). $$
\end{enumerate}
\end{defn}

Then we prove

\begin{prop} \label{weak-sol-is-visc-sol} 
 Let $u$ be a weak solution to \eqref{fb} in $\Omega$ in the sense of Definition \ref{weak-lw}. Then $u$ is a viscosity solution 
to \eqref{fb} in $\Omega$ in the sense of Definition \ref{defnhsol1}.
\end{prop}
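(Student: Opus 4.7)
The plan is to verify the two conditions in Definition \ref{defnhsol1}. Condition (1), that $\Delta_{p(x)}u=f$ in $\Omega^+(u)$ in the weak sense, is literally condition (1) of Definition \ref{weak-lw}, so there is nothing to prove. All of the work lies in the free boundary condition (2), which I would split into the subcase where $\varphi^+$ touches $u$ from below and the subcase where it touches $u$ from above. Note that neither (2) nor (3) of Definition \ref{weak-lw} enters; only the two halves of (4) are used.

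For the "from below" subcase I would use the first half of (4), namely $\limsup_{x\to x_0,\,u(x)>0}|\nabla u(x)|\le g(x_0)$. Setting $\nu=\nabla\varphi(x_0)/|\nabla\varphi(x_0)|$, a first-order Taylor expansion gives $\varphi(x_0+s\nu)=|\nabla\varphi(x_0)|s+O(s^2)>0$ for $s\in(0,t]$ with $t$ small, so $u(x_0+s\nu)\ge\varphi^+(x_0+s\nu)>0$ on this segment. Since the segment lies in $\{u>0\}$, $u$ is $C^{1,\alpha}$ along it by interior regularity (Theorem 1.1 of \cite{Fan}). Integrating $\nabla u\cdot\nu$ from $s_0>0$ to $t$, using $|\nabla u|\le g(x_0)+\varepsilon$ on $B_\delta(x_0)\cap\{u>0\}$, and letting $s_0\to0^+$ with the continuity of $u$ at $x_0$, I obtain $u(x_0+t\nu)\le(g(x_0)+\varepsilon)t$. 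Combined with $u(x_0+t\nu)\ge|\nabla\varphi(x_0)|t+O(t^2)$, dividing by $t$ and sending $t\to0^+$, $\varepsilon\to0$ yields $|\nabla\varphi(x_0)|\le g(x_0)$.

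For the "from above" subcase I would use the second half of (4). First I produce a ball $B\subset\{u=0\}$ tangent to $F(u)$ at $x_0$: since $\varphi\in C^2(\overline{\{\varphi>0\}})$ with $\nabla\varphi(x_0)\ne0$, the level set $\{\varphi=0\}$ is a $C^2$ hypersurface at $x_0$ with normal $\nu$, and for any $r$ smaller than the reciprocal of its curvature the ball $B=B_r(x_0-r\nu)$ sits in $\{\varphi<0\}$ near $x_0$; since $u\le\varphi^+=0$ there, $B\subset\{u=0\}$ with $x_0\in\partial B$. Property (4) then gives $\limsup_{x\to x_0,\,u(x)>0}u(x)/\mbox{dist}(x,B)\ge g(x_0)$. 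For the matching upper bound I use $u\le\varphi$ on $\{\varphi>0\}$, the second-order expansion $\varphi(x)\le|\nabla\varphi(x_0)|\langle z,\nu\rangle+C|z|^2$ with $z=x-x_0$ and $C\sim\|D^2\varphi(x_0)\|$, together with the geometric identities $\mbox{dist}(x,B)=\rho-r$, $\langle z,\nu\rangle=\rho\cos\theta-r$, and $|z|^2=2r(r+d)(1-\cos\theta)+d^2$ where $\rho=|x-(x_0-r\nu)|$, $d=\rho-r$ and $\theta$ is the angle from $\nu$. For $r$ chosen small enough relative to $|\nabla\varphi(x_0)|$ and $C$, the negative $\theta^2$ contribution from $\cos\theta-1$ in $\langle z,\nu\rangle$ absorbs the positive tangential error $Cr(r+d)\theta^2$, leaving $\varphi(x)\le|\nabla\varphi(x_0)|d+Cd^2$; dividing by $\mbox{dist}(x,B)=d$ and passing to the limsup gives $|\nabla\varphi(x_0)|\ge g(x_0)$.

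The main technical obstacle is this last distance-ratio estimate. Along tangential sequences $x_k\to x_0$ where the normal component $\langle z_k,\nu\rangle$ is of lower order than $|z_k|$, both $\varphi(x_k)$ and $\mbox{dist}(x_k,B)$ degenerate and a naive expansion gives a ratio that can blow up. The resolution is the freedom to shrink the radius $r$ of $B$: once $r$ is taken smaller than a universal multiple of $|\nabla\varphi(x_0)|/C$, the tangential $|z|^2$ term in the Taylor bound on $\varphi$ is pointwise dominated by the negative $r\theta^2$ contribution produced by the curvature of $\partial B$, which simultaneously reflects the geometric fact that $B$ is chosen tangent to $\{\varphi=0\}$ from the correct side. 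After this absorption only the harmless $O(d^2)$ term survives, and the desired pointwise inequality $\varphi(x)\le|\nabla\varphi(x_0)|\mbox{dist}(x,B)+O(\mbox{dist}(x,B)^2)$ then closes the argument.
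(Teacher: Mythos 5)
Your proof is correct, and it takes a genuinely different (and in one respect more economical) route from the paper's.

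For the touching-from-below case, the paper first invokes Proposition~2.1 of \cite{LW2} to obtain local Lipschitz continuity of $u$ (a step that implicitly rests on the nondegeneracy clause (ii) of Definition~\ref{weak-lw}), then performs a double blow-up and applies the asymptotic-expansion Lemma~\ref{lemma51} to a $p_0$-harmonic blow-up limit. Your one-dimensional integration along the ray $x_0+s\nu$ bypasses all of this: you only need interior $C^{1,\alpha}$ regularity on the open segment (Theorem~1.1 in \cite{Fan}), the comparison $u\ge\varphi^+$, continuity of $u$ at $x_0$, and the first half of clause (iv), so you never touch clauses (ii) or (iii) nor the machinery of Section~\ref{asymptoticI}. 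That is a real simplification. One small point worth making explicit when you integrate $\nabla u\cdot\nu$ from $s_0$ to $t$ and then send $s_0\to0^+$: the limsup condition supplies a uniform bound $|\nabla u|\le g(x_0)+\varepsilon$ on the whole punctured segment, which is what lets the improper integral converge and the limit pass.

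For the touching-from-above case, both proofs build the tangent ball $B\subset\{u=0\}$ and appeal to the second half of clause (iv) for the lower bound $\alpha\ge g(x_0)$. The difference is in the matching upper bound on $\limsup u(x)/\mathrm{dist}(x,B)$. The paper applies the mean value theorem on the chord from $y_k$ (nearest point of $\overline B$ to $x_k$) to $x_k$, exploiting that the unit vector $(x_k-y_k)/|x_k-y_k|$ is the outward normal to $\partial B$ at $y_k$ and converges to $\nu=\nabla\varphi(x_0)/|\nabla\varphi(x_0)|$, so the chain $|\nabla\varphi(x_0)| =\lim \nabla\varphi(\xi_k)\cdot\frac{x_k-y_k}{|x_k-y_k|}\ge\alpha$ is immediate. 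Your explicit geometric expansion and the absorption of the tangential $O(\theta^2)$ error by the curvature of $\partial B$ (after shrinking $r$) gives the same conclusion, but is longer and requires the extra step of tuning $r$ against $\|D^2\varphi\|$; the paper's mean-value argument is shorter and free of such tuning. Both are valid.
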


\begin{proof} Let $u$ be as in the statement. Then $u$ is continuous and nonnegative in $\Omega$ and satisfies condition (i) in 
Definition \ref{defnhsol1}. In order to show that it verifies condition (ii) in that definition, we divide the analysis into two cases.

\smallskip

{\bf Case (a).} Let $\varphi \in C(\Omega)$, $\varphi \in C^2(\overline{\Omega^+(\varphi)})$ be such that $\varphi^+$ touches $u$ from 
below at $x_0 \in F(u)$ and $\nabla \varphi(x_0)\not=0$. We want to show that 
\begin{equation}\label{u-is-sub}
|\nabla \varphi(x_0)| \leq g(x_0).
\end{equation}

We first observe that, under the present assumptions, Proposition 2.1 in \cite{LW2} applies, so $u$ is locally Lipschitz in $\Omega$.

Also there holds that $\varphi^+$ has a $C^2$ extension $\tilde\varphi$ in a neigborhood $\mathcal O$ of $x_0$ ($\tilde\varphi=\varphi^+$ in
$\overline{\Omega^+(\varphi)}\cap\mathcal O$, $\tilde\varphi <0$ otherwise in $\mathcal O$), that to simplify the notation we still denote $\varphi$. 

Moreover, $\varphi$ touches $u$ from below at $x_0 \in F(u)$ as well.

By the implicit function theorem,  $F(\varphi)$ is a $C^2$ hypersurface in a neighborhood of $x_0$. 
Then, $F(\varphi)$ has a tangent ball $B$ at $x_0$, with $B\subset \Omega^+(\varphi)$ and also with  $B\subset \Omega^+(u)$ and
$x_0\in F(u)\cap \partial B$.

 We now consider a sequence $\lambda_j\to 0,$   $\lambda_j> 0$. Since $u$ and $\varphi$ are Lipschitz in a neighborhood of $x_0$, then there exist Lipschitz functions
$u_0$ and $\varphi_0$ such that, for a subsequence, 
$$
u_{\lambda_j}(x)=\frac{u(x_0+\lambda_j x)}{\lambda_j}\to u_{0}(x),\qquad \frac{\varphi(x_0+\lambda_j x)}{\lambda_j}\to \varphi_{0}(x),
$$
uniformly on compact sets of $\R^n$. For simplicity we assume that the interior normal to $\partial B$ at $x_0$ is $e_n$.
Then
$$u_0(x)\ge \varphi_0(x)=|\nabla \varphi(x_0)|x_n^+ \ \text{ in }\{x_n\ge 0\},$$
$$\Delta_{p_0} u_0=0 \ \text{ in }\{u_0>0\}\supset\{x_n > 0\}, \ \text{ with } p_0=p(x_0).$$

Then, the application of Lemma \ref{lemma51}, Case (a), at the origin, gives
$$u_0(x)=\gamma x_n^+ +o(|x|) \ \text{ in } B_1(e_n), \text{ with } \gamma>0.$$

 We now consider a sequence $\mu_j\to 0,$   $\mu_j> 0$. Then, there exist Lipschitz functions
$u_{00}$ and $\varphi_{00}$ such that, for a subsequence, 
$$
({u_0})_{\mu_j}(x)=\frac{u_0(\mu_j x)}{\mu_j}\to u_{00}(x),\qquad \frac{\varphi_0(\mu_j x)}{\mu_j}\to \varphi_{00}(x),
$$
uniformly on compact sets of $\R^n$.  There holds that
$$u_{00}(x)=\gamma x_n^+\ge  \varphi_{00}(x)=|\nabla \varphi(x_0)|x_n^+ \ \text{ in }\{x_n\ge 0\},$$
and
\begin{equation}\label{lower-bound-grad-u00}
|\nabla u_{00}(x)|=\gamma \ge  |\nabla \varphi(x_0)|\ \text{ in }\{x_n> 0\}.
\end{equation}
Now let
$$
\alpha:=\limsup_{\stackrel{x\to x_0}{u(x)>0}} |\nabla u(x)|.
$$
Then, by (iv) in Definition \ref{weak-lw}, we have
\begin{equation}\label{bound-lim-sup-grad}
g(x_0)\ge \alpha.
\end{equation}

Let us see that 
\begin{equation}\label{bound-grad-u00}
|\nabla u_{00}|\leq \alpha \ \text{ in } \R^n.
\end{equation} 
In fact,
let $R>0$ and $\epsilon>0$. Then, there exists $\lambda_0>0$ such that
$|\nabla u(x)|\leq \alpha+\epsilon$ in $B_{\lambda_0
R}(x_0)$. We thus have
$|\nabla u_{\lambda_j}(x)|\leq \alpha +\epsilon$ in $B_R$ for $j$ large. Passing to
the limit, we obtain $|\nabla u_0|\leq \alpha+\epsilon$ in $B_R$ and then  $|\nabla u_{0}|\leq \alpha$ in $\R^n$.
Now also $|\nabla({u_0})_{\mu_j}|\leq \alpha$ in $\R^n$. Passing to the limit again, we obtain \eqref{bound-grad-u00}.

Then, \eqref{bound-lim-sup-grad}, \eqref{bound-grad-u00} and \eqref{lower-bound-grad-u00} give $g(x_0)\ge \alpha\ge \gamma \ge  |\nabla \varphi(x_0)|$. That is,  \eqref{u-is-sub} holds.

\smallskip

{\bf Case (b).} Now let $\varphi \in C(\Omega)$, $\varphi \in C^2(\overline{\Omega^+(\varphi)})$ such that $\varphi^+$ touches $u$ 
from above at $x_0 \in F(u)$ and $\nabla \varphi(x_0)\not=0$. We want to show that 
\begin{equation}\label{u-is-super}
|\nabla \varphi(x_0)| \geq g(x_0).
\end{equation}

Also in this case there holds that $\varphi^+$ has a $C^2$ extension $\tilde\varphi$ in a neighborhood of $x_0$, that to simplify the notation we still denote $\varphi$. 

By the implicit function theorem,  $F(\varphi)$ is a $C^2$ hypersurface in a neighborhood of $x_0$. 
Then, $F(\varphi)$ has a tangent ball $B$ at $x_0$, with $B\subset \Omega\setminus\overline{\Omega^+(\varphi)}$ and also with  
$B\subset \{u= 0\}$ and
$x_0\in F(u)\cap \partial B$.

Now let
$$
\alpha:=\limsup_{\stackrel{x\to x_0}{u(x)>0}} \frac{u(x)}{\mbox{dist}(x,B)}.
$$
Then, by (iv) in Definition \ref{weak-lw}, we have
\begin{equation}\label{lower-bound-lim-sup-grad}
g(x_0)\le \alpha.
\end{equation}

 Let  $x_k \to x_0$ with $u(x_k)>0$ be such that 
\begin{equation}\label{bound1}
\frac{u(x_k)}{\mbox{dist}(x_k,B)}\to \alpha.
\end{equation}

Since $\varphi^+\ge u$ in a neighborhood of $x_0$, then $\varphi(x_k)>0$. Now let $y_k\in \partial B$ such that $\mbox{dist}(x_k,B)=|x_k-y_k|$. Then $\varphi (y_k)\le 0$ and
\begin{equation}\label{bound2}
\frac{\varphi(x_k)-\varphi(y_k)}{|x_k-y_k|}\ge\frac{\varphi(x_k)}{\mbox{dist}(x_k,B)}\ge \frac{u(x_k)}{\mbox{dist}(x_k,B)}.
\end{equation}

But, for a subsequence,
\begin{equation}\label{bound3}
\frac{\varphi(x_k)-\varphi(y_k)}{|x_k-y_k|}=\nabla \varphi(\xi_k)\cdot \frac{(x_k- y_k)}{|x_k-y_k|}\to 
\nabla \varphi(x_0)\cdot  \frac{\nabla \varphi (x_0)}{|\nabla \varphi (x_0)|},
\end{equation}
where  for every $k$, $\xi_k$ is a point in the segment joining $x_k$ and $y_k$. Putting \eqref{bound1}, \eqref{bound2} and \eqref{bound3} together
we get $|\nabla \varphi (x_0)|\ge \alpha$. Now recalling \eqref{lower-bound-lim-sup-grad}, we get \eqref{u-is-super} which completes the proof.
\end{proof}

Then,   we obtain
\begin{cor}\label{Lip-lw2}
Let $u$ be a weak solution to \eqref{fb} in $B_1$ in the sense of Definition \ref{weak-lw}, with
$0\in F(u)$. 
 If $F(u)$ is a Lipschitz graph in a neighborhood of $0$, then 
$F(u)$ is $C^{1,\alpha}$ in a (smaller) neighborhood of $0$.
\end{cor}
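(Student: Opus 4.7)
The plan is to reduce Corollary \ref{Lip-lw2} to Theorem \ref{Lipmain} by way of the bridge established in Proposition \ref{weak-sol-is-visc-sol}. The hypotheses of the corollary are identical to those of Theorem \ref{Lipmain} except that the notion of solution is a priori different: here $u$ is a weak solution in the sense of Definition \ref{weak-lw}, while Theorem \ref{Lipmain} requires a viscosity solution in the sense of Definition \ref{defnhsol1}. So the entire proof consists of invoking the two results in sequence.

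More precisely, I would first apply Proposition \ref{weak-sol-is-visc-sol} to $u$ in the domain $\Omega = B_1$. This yields that $u$ is a viscosity solution to \eqref{fb} in $B_1$ in the sense of Definition \ref{defnhsol1}, with $0 \in F(u)$ and with the very same free boundary $F(u)$, which by assumption is a Lipschitz graph in a neighborhood of $0$. No further hypotheses need to be checked, since the standing assumptions on $p$, $f$, $g$ (stated in Subsection \ref{assump}) are in force throughout the paper and are exactly what both Proposition \ref{weak-sol-is-visc-sol} and Theorem \ref{Lipmain} require.

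Second, I would apply Theorem \ref{Lipmain} to the resulting viscosity solution. The theorem gives that $F(u)$ is $C^{1,\alpha}$ in some smaller neighborhood of $0$, with $\alpha$ depending only on the universal parameters specified in the statement of that theorem (and the size of the neighborhood depending additionally on the Lipschitz constant of $F(u)$ and on the relevant norms of the data). This is exactly the conclusion of Corollary \ref{Lip-lw2}.

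There is no genuine obstacle in this argument: all the heavy lifting has been done in Proposition \ref{weak-sol-is-visc-sol} (which uses the Lipschitz regularity of weak solutions from \cite{LW2}, the asymptotic development Lemma \ref{lemma51}, and the conditions (iii)--(iv) of Definition \ref{weak-lw} to verify the viscosity sub/supersolution conditions at free boundary points) and in Theorem \ref{Lipmain} itself. The only thing to double-check is that the two notions of solution refer to the same free boundary set $F(u)$ and the same positive phase $\Omega^+(u)$, which is immediate from the common definition \eqref{def-fb}.
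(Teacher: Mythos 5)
Your proposal is correct and follows exactly the paper's own proof: apply Proposition \ref{weak-sol-is-visc-sol} to upgrade the weak solution to a viscosity solution in the sense of Definition \ref{defnhsol1}, then invoke Theorem \ref{Lipmain}. The paper states this in a single sentence; your expanded version adds no new content but correctly identifies that the free boundary sets coincide and the standing assumptions transfer.
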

\begin{proof} The result is an immediate application of Theorem  \ref{Lipmain} and Proposition \ref{weak-sol-is-visc-sol}.
\end{proof}

\section{Some applications}\label{applic}

In this section we discuss some applications of both the results obtained in the present paper and in \cite{FL}, and we draw some conclusions
on them (see Remark \ref{rem-concl-applic}).

The applications of our results discussed here correspond to three different minimization problems that were already studied in \cite{LW1}, \cite{LW3} and \cite{LW4}. Our results below rely on the thorough understanding of the properties of nonnegative local minimizers achieved in those papers. We also refer to them for the motivation and related literature.

The notation and the assumptions on $\Omega, p$ and $f$ will be the same as in the rest of the paper (see Subsection \ref{assump} and Section \ref{section2}).  In particular we will use the notation $\Omega^+(u)$ and $F(u)$ in \eqref{def-fb}.

\smallskip

Our first application is

\begin{prop}\label{applic-AC} Let $\Omega$, $p$ and $f$ be as above. Let $0<{\lambda_{\min}}\le\lambda(x)\le{\lambda_{\max}}<\infty$ with 
$\lambda\in C^{0, \beta}(\Omega)$.
Let $u\in W^{1,p(\cdot)}(\Omega)\cap L^{\infty}(\Omega)$   be a nonnegative local minimizer of the energy functional 
$J(v)={\displaystyle\int_\Omega\Big(\frac{|\nabla
v|^{p(x)}}{p(x)}+\lambda(x)\chi_{\{v>0\}}+fv\Big)\,dx}$
in $\Omega$.

Then,  $u$ is a viscosity solution to \eqref{fb} in $\Omega$ with $g(x)=(\frac{p(x)}{p(x)-1}\,\lambda(x))^{1/p(x)}$.

Let $x_0\in F(u)$ be such that $F(u)$ is a Lipschitz graph in a neighborhood of $x_0$, then  $F(u)$ is $C^{1,\alpha}$ in a (smaller) neighborhood of $x_0$.

Let  $x_0\in F(u)$ be such that $F(u)$ has a normal in the measure theoretic sense, then $F(u)$ is $C^{1,\alpha}$ in a neighborhood of $x_0$.

Moreover, there is a subset $\mathcal{R}$  of $F(u)$ which is locally a
$C^{1,\alpha}$ surface. The set $\mathcal{R}$ is open and
dense in $F(u)$ and the remainder of the free
boundary has $(n-1)-$dimensional Hausdorff measure zero.
\end{prop}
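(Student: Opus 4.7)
The plan is to reduce the four claims of the Proposition to direct applications of the two main regularity theorems of the paper, after first verifying that $u$ is a viscosity solution of \eqref{fb}. For that verification I would draw on the analysis in \cite{LW3}: there it is shown that every nonnegative local minimizer of $J$ is a weak solution of $\Delta_{p(x)}u=f$ off its free boundary, is locally Lipschitz continuous, is nondegenerate near $F(u)$ with the two-sided linear growth bounds appearing in (ii) of Definition \ref{weak-lw}, admits the asymptotic development $u(x)=g(x_0)\langle x-x_0,\nu(x_0)\rangle^-+o(|x-x_0|)$ at $\mathcal{H}^{n-1}$-almost every $x_0\in\partial_{\rm{red}}\{u>0\}$, and satisfies the two one-sided gradient bounds in (iv). The identification $g(x)=\bigl(\frac{p(x)}{p(x)-1}\lambda(x)\bigr)^{1/p(x)}$ comes from a standard domain-variation computation on $J$ at the free boundary (at $p(x)\equiv2$ this recovers the classical Alt--Caffarelli condition $|\nabla u|^{2}=2\lambda$). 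Collecting these properties, $u$ is a weak solution in the sense of Definition \ref{weak-lw}, so Proposition \ref{weak-sol-is-visc-sol} yields that $u$ is a viscosity solution of \eqref{fb} with the stated $g$, proving the first claim.

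With this reduction in hand, the Lipschitz-graph statement is an immediate consequence of Theorem \ref{Lipmain} (equivalently of Corollary \ref{Lip-lw2}). For the measure-theoretic-normal statement I would use the asymptotic development of $u$ at $x_0$ recalled above: after rotating so that $\nu(x_0)=-e_n$, the rescaling $\bar u(x)=u(x_0+\rho x)/\rho$ satisfies the flatness hypothesis $(x_n-\varepsilon)^+\le\bar u(x)\le(x_n+\varepsilon)^+$ in $B_1$ for $\rho$ small enough (depending on $\varepsilon$). The same rescaling makes the smallness conditions \eqref{pflat} on $\|\nabla p\|_\infty$, $\|f\|_\infty$ and $[g]_{C^{0,\beta}}$ trivially satisfied for small $\rho$, so Theorem \ref{flatmain1} applies to $\bar u$ in $B_1$ and yields $F(u)\in C^{1,\alpha}$ in a neighborhood of $x_0$.

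Finally, let $\mathcal{R}$ be the set of points of $F(u)$ at which $F(u)$ is locally $C^{1,\alpha}$. By the previous paragraph, $\mathcal{R}\supset\partial_{\rm{red}}\{u>0\}$, and $\mathcal{R}$ is open in $F(u)$ by its very definition. Standard results for Alt--Caffarelli-type minimizers, adapted in \cite{LW3} to the $p(x)$-Laplacian setting, give that $\{u>0\}$ has locally finite perimeter in $\Omega$ (a consequence of the Lipschitz regularity and nondegeneracy bounds), that $F(u)$ coincides with the support of the perimeter measure, and that $\mathcal{H}^{n-1}(F(u)\setminus\partial_{\rm{red}}\{u>0\})=0$; combined with the inclusion $\mathcal{R}\supset\partial_{\rm{red}}\{u>0\}$, this yields both the density of $\mathcal{R}$ in $F(u)$ and the estimate $\mathcal{H}^{n-1}(F(u)\setminus\mathcal{R})=0$.

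The main obstacle I anticipate is not in the regularity steps, which reduce to black-box applications of Theorems \ref{Lipmain} and \ref{flatmain1}, but in the bookkeeping of the first paragraph: one must carefully match each condition of Definition \ref{weak-lw} against the properties established in \cite{LW3} and correctly identify $g$ from the first-variation computation at the free boundary. Once this is done, Proposition \ref{weak-sol-is-visc-sol} provides the passage from weak to viscosity solution and the remaining items of the Proposition follow by invoking the main theorems of the present paper and of \cite{FL}.
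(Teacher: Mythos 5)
Your proposal is essentially the same as the paper's proof: invoke Theorem~5.1 in \cite{LW3} to identify $u$ as a weak solution in the sense of Definition~\ref{weak-lw} with the stated $g$, pass to the viscosity notion via Proposition~\ref{weak-sol-is-visc-sol}, then apply Theorem~\ref{Lipmain} for the Lipschitz-graph case, Theorem~\ref{flatmain1} (after rescaling) for the reduced-boundary case, and finally the measure-theoretic structure of $\partial\{u>0\}$ from \cite{LW3} for density and $\mathcal{H}^{n-1}$-negligibility of the complement.

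One step is stated a little too quickly. When you say that the rescaling $\bar u(x)=u(x_0+\rho x)/\rho$ satisfies the two-sided flatness bound $(x_n-\varepsilon)^+\le\bar u\le(x_n+\varepsilon)^+$ in $B_1$ as a direct consequence of the asymptotic development $u(x)=g(x_0)\langle x_0-x,\nu\rangle^++o(|x-x_0|)$, you are implicitly using more than the development itself. The development gives $|\bar u-x_n^+|\le o(1)$ in $B_1$, which yields the lower bound, but for the upper bound $\bar u\le(x_n+\varepsilon)^+$ you additionally need $\bar u\equiv 0$ on $\{x_n\le-\varepsilon\}$, and this does not follow from the development alone (which only forces $\bar u=o(1)$ there, not $\bar u=0$). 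Closing this gap requires the nondegeneracy estimate, which forces the free boundary of $\bar u$ into a thin slab around $\{x_n=0\}$; the paper handles exactly this via the Lipschitz/nondegeneracy results of \cite{LW3} (Corollary~3.2, Theorem~3.5) together with the blow-up compactness machinery of the proof of Theorem~\ref{Lipmain} and Theorem~3.6 in \cite{LW3}. You do list the nondegeneracy as an available ingredient, so this is a glossed-over detail rather than a wrong step, but it should be made explicit. A further minor omission: Theorem~\ref{flatmain1} is stated under the normalization $g(0)=1$, $p(0)=p_0$, so before rescaling one should normalize $g(x_0)$ (the paper does this at the start of the corresponding step), since the blow-up limit is then $x_n^+$ rather than $g(x_0)\,x_n^+$.
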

\begin{proof} By Theorem 5.1 in \cite{LW3}, $u$ is a weak solution  to \eqref{fb} in $\Omega$ with $g(x)=(\frac{p(x)}{p(x)-1}\,\lambda(x))^{1/p(x)}$ in the sense of Definition \ref{weak-lw}. Then by Proposition \ref{weak-sol-is-visc-sol}, $u$ is a viscosity solution 
to \eqref{fb} in $\Omega$ in the sense of Definition \ref{defnhsol1}, with the same $g$.

Let $x_0\in F(u)$ be such that $F(u)$ is a Lipschitz graph in a neighborhood of $x_0$. Then, from the application of Theorem \ref{Lipmain},  $F(u)$ is $C^{1,\alpha}$ in a smaller neighborhood of $x_0$.

Let  $x_0\in F(u)$ be such that $F(u)$ has a normal in the measure theoretic sense. Without loss of generality we assume that $x_0=0$, $g(0)=1$ and that
the inward unit normal to $F(u)$ at $0$ in the measure theoretic sense is $e_n$. Also we denote $p(0)=p_0$.

Then, by Theorem 3.9 in \cite{LW3} there holds that
\begin{equation}\label{develop-for-u}
u(x)=x_n^+ +o(|x|) \ \text{ in } \R^n.
\end{equation}

By Corolary 3.2 and Theorem 3.5 in \cite{LW3} we know that $u$ is Lipschitz and nondegenerate in some ball $B_{r_0}$, with $0<r_0<1$.

Then, as in {\it Step II}  in the proof of Theorem \ref{Lipmain}, we take $\delta_k>0$, $\delta_k\to 0$, and consider a blow up sequence $u_k$ as in 
\eqref{defuk}. As in that theorem, our goal is to apply Theorem \ref{flatmain1} to $u_k$, for large $k$. We first observe that, taking $k$ sufficiently large, the assumption \eqref{pflat} in that theorem is satisfied for the universal constant $\bar{\ep}$. In fact, in $B_1$,
\eqref{condiconv} holds.

Arguing again as in Theorem \ref{Lipmain}, we see that \eqref{ukconvu0} holds with
$u_0(x)=x_n^+ \ \text{ in } \R^n,$
because of \eqref{develop-for-u}. Then, reasoning as in this same theorem and using Theorem 3.6 in \cite{LW3}, we obtain for $k$ sufficiently large that  \eqref{flat-cond-uk} holds in $B_1$,
for $\bar{\ep}$ the universal constant in Theorem \ref{flatmain1}. Therefore,  Theorem \ref{flatmain1} applies to $u_k$, and as a consequence,  $F(u)$ is $C^{1,\alpha}$ in a neighborhood of $0$. 

Finally, denoting $\mathcal{R}$  the set of points in $F(u)$ such that
$F(u)$ has a   normal in the measure theoretic sense, we argue as in Theorem 5.2 in \cite{LW3} and obtain that $\mathcal{R}$ is dense
in $F(u)$ and 
${\mathcal H}^{n-1}(F(u)\setminus\mathcal{R})=0$.
\end{proof}

Our next application is

\begin{prop}\label{applic-sing-pert} For $\ep>0$, let 
$B_\ep(s)=\int _0^s\tilde{\beta}_\ep(\tau) \, d\tau$ where ${\tilde{\beta}}_{\varepsilon}(s)={1 \over \varepsilon} \tilde{\beta}({s \over
\varepsilon})$, with $\tilde{\beta}$  a  Lipschitz  function satisfying
$\tilde{\beta}>0$ in $(0,1)$, $\tilde{\beta}\equiv 0$ outside $(0,1)$. 
Let $\Omega$, $p$ and $f$ be as above,  $1<p_{\min}\le
p_{\varepsilon_j}(x)\le p_{\max}<\infty$ and $\|\nabla p_{\varepsilon_j}\|_{L^{\infty}}\leq L$.   
Let $u^{\varepsilon_j}\in W^{1,p_{\varepsilon_j}(\cdot)}(\Omega)$ be a family of nonnegative local minimizers of the energy functional
$J_{\varepsilon_j}(v)={\displaystyle\int_\Omega \Big(\frac{|\nabla v|^{p_{\varepsilon_j}(x)}}{p_{\varepsilon_j}(x)}+B_{\varepsilon_j}(v)+{f^{{\varepsilon}_j}} v\Big)\, dx}$
in $\Omega$ such that $u^{\varepsilon_j}\rightarrow u$ uniformly on compact subsets of $\Omega$,
${f^{{\varepsilon}_j}}\rightharpoonup f$ $*-$weakly in $L^\infty(\Omega)$,
$p_{\varepsilon_j}\to p$ uniformly on compact subsets of $\Omega$ and
${\varepsilon_j}\to 0$.

Then,  $u$ is a viscosity solution to \eqref{fb} in $\Omega$ with $g(x)=(\frac{p(x)}{p(x)-1}\,M)^{1/p(x)}$ and
$M=\int \tilde{\beta}(s)\, ds$.

Let $x_0\in F(u)$ be such that $F(u)$ is a Lipschitz graph in a neighborhood of $x_0$, then  $F(u)$ is $C^{1,\alpha}$ in a (smaller) neighborhood of $x_0$.

Let  $x_0\in F(u)$ be such that $F(u)$ has a normal in the measure theoretic sense, then $F(u)$ is $C^{1,\alpha}$ in a neighborhood of $x_0$.

Moreover, there is a subset $\mathcal{R}$  of $F(u)$ which is locally a
$C^{1,\alpha}$ surface. The set $\mathcal{R}$ is open and
dense in $F(u)$ and the remainder of the free
boundary has $(n-1)-$dimensional Hausdorff measure zero.
\end{prop}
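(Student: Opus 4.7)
The proof will follow the same overall template as Proposition \ref{applic-AC}, with the role of the Alt--Caffarelli-type results from \cite{LW3} played by the corresponding results for the singular perturbation problem from \cite{LW1}. The plan is first to show that the limit $u$ is a weak solution to \eqref{fb} in the sense of Definition \ref{weak-lw}; this is precisely a passage-to-the-limit result for $u^{\varepsilon_j}$ that is contained in \cite{LW1} (identification of the free boundary condition with the natural Bernoulli-type constant $g(x)=(\frac{p(x)}{p(x)-1}M)^{1/p(x)}$, together with the nondegeneracy, density and asymptotic development properties of $u$). Once this is established, Proposition \ref{weak-sol-is-visc-sol} converts the weak solution into a viscosity solution in the sense of Definition \ref{defnhsol1}, which is the statement we need.

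For the second statement, if $x_0\in F(u)$ is a point where $F(u)$ is locally a Lipschitz graph, we apply Theorem \ref{Lipmain} directly to $u$ at $x_0$ (after a translation and normalization so that $g(x_0)=1$) to conclude that $F(u)$ is $C^{1,\alpha}$ in a smaller neighborhood. The dependence of the size of that neighborhood and of $\alpha$ on the data is already recorded in the statement of Theorem \ref{Lipmain}.

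For the third statement, at a point $x_0\in F(u)$ where $F(u)$ admits an exterior unit normal in the measure-theoretic sense, we proceed by a blow-up argument, exactly as in the proof of Proposition \ref{applic-AC}. We normalize so that $x_0=0$, $g(0)=1$, $p(0)=p_0$, and the inward measure-theoretic normal is $e_n$. The asymptotic development
\begin{equation*}
u(x)=x_n^+ + o(|x|) \quad\text{as } x\to 0,
\end{equation*}
which is the analogue of Theorem 3.9 of \cite{LW3} for the singular perturbation setting (available in \cite{LW1}), together with the local Lipschitz continuity and nondegeneracy of $u$ near $0$ (also from \cite{LW1}), allow us to consider the blow-up sequence $u_k(x)=u(\delta_k x)/\delta_k$ with $\delta_k\to 0^+$. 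As in Step II of the proof of Theorem \ref{Lipmain}, the rescaled data $f_k$, $p_k$, $\nabla p_k$ and $g_k$ satisfy \eqref{condiconv} and therefore fulfill the smallness hypothesis \eqref{pflat} of Theorem \ref{flatmain1} for large $k$; moreover $u_k\to u_0(x)=x_n^+$ uniformly on compacts. Consequently, for $k$ large the flatness condition \eqref{flat-cond-uk} holds in $B_1$ for the universal $\bar\varepsilon$ of Theorem \ref{flatmain1}, and that theorem yields $F(u_k)\in C^{1,\alpha}$ in $B_{1/2}$, hence $F(u)\in C^{1,\alpha}$ in a neighborhood of $0$.

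The last statement is then obtained by letting $\mathcal{R}$ be the set of points of $F(u)$ admitting a normal in the measure-theoretic sense. By the previous step, $\mathcal{R}$ is relatively open in $F(u)$ and locally a $C^{1,\alpha}$ hypersurface; density of $\mathcal{R}$ in $F(u)$ and $\mathcal H^{n-1}(F(u)\setminus\mathcal R)=0$ follow from the corresponding statements for local minimizers of $J_{\varepsilon}$ and their limits in \cite{LW1}, exactly as in the conclusion of Proposition \ref{applic-AC}. The main technical point to verify carefully (as opposed to mechanically quoted from \cite{LW1}) is that the results of \cite{LW1} do provide the full set of ingredients used above --- weak-solution property, uniform Lipschitz and nondegeneracy bounds on the limit $u$, the one-sided asymptotic development at measure-theoretic normal points, and the density/Hausdorff measure estimate of the regular set --- under the present assumptions on $p_{\varepsilon_j}$ and $f^{\varepsilon_j}$; once these are in hand, the rest of the proof is a direct transcription of the argument for Proposition \ref{applic-AC}.
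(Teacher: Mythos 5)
Your proposal follows the paper's own proof exactly: reduce to the weak-solution framework, invoke Proposition~\ref{weak-sol-is-visc-sol}, and then apply Theorems~\ref{Lipmain} and~\ref{flatmain1} (the latter via the blow-up argument from Proposition~\ref{applic-AC}). The one place you deviate slightly is in the sourcing of the intermediate ingredients (weak-solution identification, Lipschitz and nondegeneracy bounds, asymptotic development at measure-theoretic normal points, density of the regular set): you attribute all of these to \cite{LW1}, whereas the paper in fact draws most of them from \cite{LW3} (specifically Theorems~5.3, 4.3, 4.4 and Remark~4.2, which in \cite{LW3} treat precisely the singular-perturbation limit setting), supplemented by Theorem~5.3 of \cite{LW1}. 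This is a citational rather than a substantive gap, but it is worth being precise since the relevant properties of the singular-perturbation limits are not all contained in \cite{LW1}.
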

\begin{proof} We argue exactly as in the proof of Proposition \ref{applic-AC}. We apply again our results in Proposition \ref{weak-sol-is-visc-sol} and Theorems \ref{Lipmain} and \ref{flatmain1}, and in this case we make use of Theorems 5.3, 4.3, 4.4 and Remark 4.2 in \cite{LW3}, and Theorem 5.3 in \cite{LW1}. \end{proof}

\medskip

We also obtain

\medskip

\begin{rem} \label{applic-optim}
In \cite{LW4} an optimization problem with volume constraint for an energy associated to the inhomogeneous $p(x)$-Laplacian was considered. By means of a penalization technique, it was shown that nonnegative minimizers $u$ are weak solutions to \eqref{fb}  in a bounded domain
$\Omega$ in the sense of Definition \ref{weak-lw}  with $g(x)=(\frac{p(x)}{p(x)-1}\,\lambda_{u})^{1/p(x)}$, where $\lambda_{u}>0$ is a constant.

Under the assumptions we made on $p$ and $f$ at the beginning of present section, by combining our results with those in \cite{LW4}, we can argue as in Propositions \ref{applic-AC} and \ref{applic-sing-pert} and obtain  the same conclusions for $u$ and $F(u)$.
\end{rem}

\medskip

\begin{rem} \label{rem-concl-applic}
In Propositions \ref{applic-AC} and \ref{applic-sing-pert} and Remark \ref{applic-optim}, our $C^{1,\alpha}$ regularity results on $F(u)$ under the Lipschitz assumption on $F(u)$ follow from the application of Theorem \ref{Lipmain} in the present paper and are new.

We want to point out that the rest our $C^{1,\alpha}$ regularity results on $F(u)$ in Propositions \ref{applic-AC} and \ref{applic-sing-pert} and Remark \ref{applic-optim}, which follow from Theorem \ref{flatmain1} (i.e., Theorem 1.1 in \cite{FL}), were already obtained in \cite{LW3} and \cite{LW4}, from the application of the results in \cite{LW2}, but under different assumptions on $f$ and $p$.

In fact, our results in \cite{FL} ---inspired in De Silva's approach (see \cite{D})--- require that $f\in C(\Omega)\cap L^{\infty}(\Omega)$ and $p\in C^1(\Omega)$ and Lipschitz, whereas the results in \cite{LW2} ---inspired in Alt - Caffarelli's approach 
(see \cite{AC})--- require that $f\in L^{\infty}(\Omega)\cap W^{1,q}(\Omega)$ and $p\in W^{1,\infty}(\Omega)\cap W^{2,q}(\Omega)$, for $q>\max\{1, n/2\}$.

The reason for this difference in the assumptions relies on the fact that in De Silva's approach for viscosity solutions the estimates are obtained by comparison with
suitable barriers. In Alt - Caffarelli's approach for weak (variational) solutions, certain estimates on $|\nabla u|$ close to the free boundary are obtained by looking for an equation for $v=|\nabla u|$, which requires more delicate computations.
\end{rem}

\appendix

\section{Lebesgue and Sobolev spaces with variable
exponent} \label{appA1}

Let $p :\Omega \to  [1,\infty)$ be a measurable bounded function,
called a variable exponent on $\Omega$, and denote $p_{\max} = {\rm
ess sup} \,p(x)$ and $p_{\min} = {\rm ess inf} \,p(x)$. The variable exponent Lebesgue space $L^{p(\cdot)}(\Omega)$ is defined as the set of all measurable functions $u :\Omega \to \R$ for which
the modular $\varrho_{p(\cdot)}(u) = \int_{\Omega} |u(x)|^{p(x)}\,
dx$ is finite. The Luxemburg norm on this space is defined by
$$
\|u\|_{L^{p(\cdot)}(\Omega)} = \|u\|_{p(\cdot)}  = \inf\{\lambda >
0: \varrho_{p(\cdot)}(u/\lambda)\leq 1 \}.
$$

This norm makes $L^{p(\cdot)}(\Omega)$ a Banach space.

There holds the following relation between $\varrho_{p(\cdot)}(u)$
and $\|u\|_{L^{p(\cdot)}}$:
\begin{align*}
\min\Big\{\Big(\int_{\Omega} |u|^{p(x)}\, dx\Big)
^{1/{p_{\min}}},& \Big(\int_{\Omega} |u|^{p(x)}\, dx\Big)
^{1/{p_{\max}}}\Big\}\le\|u\|_{L^{p(\cdot)}(\Omega)}\\
 &\leq  \max\Big\{\Big(\int_{\Omega} |u|^{p(x)}\, dx\Big)
^{1/{p_{\min}}}, \Big(\int_{\Omega} |u|^{p(x)}\, dx\Big)
^{1/{p_{\max}}}\Big\}.
\end{align*}

Moreover, the dual of $L^{p(\cdot)}(\Omega)$ is
$L^{p'(\cdot)}(\Omega)$ with $\frac{1}{p(x)}+\frac{1}{p'(x)}=1$.

 $W^{1,p(\cdot)}(\Omega)$ denotes the space of measurable
functions $u$ such that $u$ and the distributional derivative
$\nabla u$ are in $L^{p(\cdot)}(\Omega)$. The norm

$$
\|u\|_{1,p(\cdot)}:= \|u\|_{p(\cdot)} + \| |\nabla u|
\|_{p(\cdot)}
$$
makes $W^{1,p(\cdot)}(\Omega)$ a Banach space.

The space $W_0^{1,p(\cdot)}(\Omega)$ is defined as the closure of
the $C_0^{\infty}(\Omega)$ in $W^{1,p(\cdot)}(\Omega)$.

For  further details on these spaces, see \cite{DHHR},
\cite{KR}, \cite{RaRe} and their references.

\section{A Liouville type result}\label{app-liouv}

In this Appendix we prove, for the sake of completeness, a Liouville type result for the $p_0$-Laplace operator,  because we did not find it in the literature in this form.  This result plays a key role in Section \ref{section7}.
\begin{lem}\label{lemma7.3}
Let  $1<p_0<\infty$ be constant. Let $u$ be Lipschitz in  $\mathbb{R}^n\cap\{x_n\ge0\}$ and solution to \begin{equation}  
\label{fbliminftylinboh}
\left\{
\begin{array}{ll}
\Delta_{p_0} u = 0, & \hbox{in $\{x_n>0\}$}, \\
\  &  \\
u= 0, &\mbox{on}\quad \{x_n=0\}.
\end{array}
\right.
\end{equation}
 Then, there exists $C\in\mathbb{R}$ such that $u(x)=C x_n$  in  $\{x_n\ge0\}$.
\end{lem}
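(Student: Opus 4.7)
The plan is to reduce to a Liouville-type theorem on all of $\mathbb{R}^n$ by an odd reflection across $\{x_n=0\}$. Define
\[
\tilde{u}(x',x_n) = \begin{cases} u(x',x_n) & \text{if } x_n \geq 0,\\ -u(x',-x_n) & \text{if } x_n < 0. \end{cases}
\]
Since $u$ vanishes on $\{x_n=0\}$ and is Lipschitz, $\tilde u$ is continuous and globally Lipschitz on $\mathbb{R}^n$ with $\mathrm{Lip}(\tilde u)=\mathrm{Lip}(u)$, and in particular $\tilde u \in W^{1,p_0}_{\mathrm{loc}}(\mathbb{R}^n)$.

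Next I would verify that $\tilde u$ is a weak solution of $\Delta_{p_0}\tilde u = 0$ in the whole space. In each open half-space this follows from a direct change of variables: the composition of $u\mapsto -u$ and $x_n\mapsto -x_n$ preserves the $p_0$-Laplacian, so $\Delta_{p_0}\tilde u = 0$ weakly in $\{x_n<0\}$. For the passage across $\{x_n=0\}$, note that the oddness of $\tilde u$ in $x_n$ gives $\partial_i\tilde u(x',-x_n)=-\partial_i\tilde u(x',x_n)$ for $i<n$ and $\partial_n\tilde u(x',-x_n)=\partial_n\tilde u(x',x_n)$. Hence the flux $\mathbf j := |\nabla\tilde u|^{p_0-2}\nabla\tilde u$ is bounded and satisfies $\mathbf j_n(x',-x_n)=\mathbf j_n(x',x_n)$, i.e.\ its normal component is continuous across the hyperplane. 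Testing against $\varphi \in C_c^\infty(\mathbb{R}^n)$, splitting the integral into the two half-spaces and integrating by parts in each, the boundary contributions on $\{x_n=0\}$ cancel and we get $\int \mathbf j\cdot\nabla\varphi\,dx=0$, so $\tilde u$ is globally $p_0$-harmonic.

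It then remains to prove a Liouville principle: any globally Lipschitz weak solution of $\Delta_{p_0}v=0$ on $\mathbb{R}^n$ is affine. For $R\ge 1$ consider the rescaling $v_R(x):=R^{-1}\tilde u(Rx)$, which is $p_0$-harmonic on $B_2$ and Lipschitz with constant $\mathrm{Lip}(\tilde u)$ independent of $R$. By the interior $C^{1,\alpha}$ estimates for the $p_0$-Laplacian (Uhlenbeck/Tolksdorf/DiBenedetto/Lewis), there exist $\alpha=\alpha(n,p_0)\in(0,1)$ and $C=C(n,p_0,\mathrm{Lip}(\tilde u))$ with
\[
[\nabla v_R]_{C^\alpha(B_1)} \le C.
\]
Scaling back, $[\nabla\tilde u]_{C^\alpha(B_R)}\le C R^{-\alpha}\to 0$ as $R\to\infty$, so $\nabla\tilde u$ is constant and $\tilde u$ is affine. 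Since $\tilde u(x',0)=0$, it must be of the form $\tilde u(x)=Cx_n$, giving the conclusion.

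The only delicate point is Step 2, namely the passage of the equation across $\{x_n=0\}$; everything reduces to the observation that odd reflection flips tangential derivatives while preserving the normal one, so the normal trace of the flux automatically matches from the two sides. The rest is a standard rescaling-plus-$C^{1,\alpha}$ argument.
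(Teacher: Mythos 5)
Your proposal follows the same three-step architecture as the paper's proof: odd reflection of $u$ across $\{x_n=0\}$, showing the reflected function $\tilde u$ is globally $p_0$-harmonic, and then a scaling-plus-$C^{1,\alpha}$ Liouville argument to conclude $\tilde u$ is affine. The Liouville step and the final conclusion from $\tilde u(x',0)=0$ are essentially identical to the paper's.

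The difference is in the middle step, and it is the one place where your argument is not fully rigorous as written. You integrate by parts in each half-space and assert that the boundary contributions on $\{x_n=0\}$ cancel because the normal flux $j_n(x',-x_n)=j_n(x',x_n)$, where $j:=|\nabla\tilde u|^{p_0-2}\nabla\tilde u$. However, $u$ is only assumed Lipschitz and weakly $p_0$-harmonic, so a priori $j$ is merely $L^\infty_{\rm loc}$; it is not automatic that $j$ has a well-defined normal trace on $\{x_n=0\}$, nor that the almost-everywhere reflection symmetry of $j_n$ passes to an equality of traces from the two sides. To justify your integration by parts one must invoke an extra ingredient: either boundary $C^{1,\alpha}$ regularity for $p_0$-harmonic functions with zero Dirichlet data (Lieberman), or the Gauss--Green/trace theory for $L^\infty$ vector fields with bounded divergence. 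The paper avoids this entirely: after the same change of variables it reduces the global weak formulation to
\[
\int_{\{x_n>0\}}|\nabla u|^{p_0-2}\langle\nabla u,\nabla\eta\rangle\,dx, \qquad \eta(x):=\varphi(x',x_n)-\varphi(x',-x_n),
\]
notes that $\eta$ vanishes on $\{x_n=0\}$, approximates $\eta$ in $W^{1,p_0}$ by functions in $C_c^\infty(\{x_n>0\})$, and concludes the integral vanishes directly from the weak formulation of $\Delta_{p_0}u=0$ in the half-space. This density argument buys you exactly what your integration by parts needs but does not have: no boundary regularity of $\nabla u$ is required. You correctly flagged this as the delicate point; I would replace the flux-trace cancellation with the approximation argument, or at least cite Lieberman's up-to-the-boundary estimate to make the trace well-defined.
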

\begin{proof}

 We consider, for $x=(x',x_n),$ $x'\in \mathbb{R}^{n-1},$ $x_n\in \mathbb{R},$ the extended function
\begin{equation*}
\tilde{u}(x',x_n)=\left\{\begin{array}{l}
u(x',x_n),\quad x_n\geq 0,\\
-u(x',-x_n), \quad x_n\leq 0.
\end{array} 
\right.
\end{equation*}
{}From the Lipschitz continuity of $u$ in the set $\{x_n\geq 0\}$ it  follows that $\tilde{u}$ is Lipschitz in $\mathbb{R}^n$ and  $\tilde{u}\in W^{1,\infty}_{\rm loc}(\mathbb{R}^n).$ Now let $\varphi\in C_0^\infty(\mathbb{R}^n)$. There holds
\begin{equation}\label{eq-for-utilde}\begin{split}
&\int_{\mathbb{R}^n}|\nabla \tilde{u}|^{p_0-2}\langle\nabla\tilde{u},\nabla\varphi\rangle dx=\int_{\mathbb{R}^n\cap\{x_n>0\}}|\nabla \tilde{u}|^{p_0-2}\langle\nabla\tilde{u},\nabla\varphi\rangle dx\\
&+\int_{\mathbb{R}^n\cap\{x_n<0\}}|\nabla \tilde{u}|^{p_0-2}\langle\nabla\tilde{u},\nabla\varphi\rangle dx\\
&=\int_{\mathbb{R}^n\cap\{x_n>0\}}|\nabla u|^{p_0-2}\langle\nabla u,\nabla\varphi\rangle dx-\int_{\mathbb{R}^n\cap\{x_n>0\}}|\nabla u|^{p_0-2}\langle\nabla u,\nabla\tilde{\varphi}\rangle dx\\
&=\int_{\mathbb{R}^n\cap\{x_n>0\}}|\nabla u|^{p_0-2}\langle\nabla u,\nabla\eta\rangle dx,
\end{split}
\end{equation}
where $\tilde{\varphi}(x',x_n):=\varphi(x',-x_n)$ and $\eta(x):=\varphi(x',x_n)-\varphi(x',-x_n)\in C_0^\infty(\mathbb{R}^n).$ In particular, $\eta(x',0)=0$ and thus, there exists $\{\eta_j\}_{j\in \mathbb{N}}\subset C_0^{\infty}(\mathbb{R}^n\cap\{x_n>0\})$ such that $\eta_j\to\eta$ in $W^{1,p_0}(\mathbb{R}^n\cap\{x_n> 0\})$ with $\mbox{spt}\eta_j,\mbox{spt}\eta\subset B_R$, for some $R>0$. Then,
\begin{equation*}\begin{split}
\int_{\mathbb{R}^n\cap\{x_n>0\}}|\nabla u|^{p_0-2}\langle\nabla u,\nabla\eta_j\rangle dx=0,
\end{split}
\end{equation*}
since $u$ is solution to \eqref{fbliminftylinboh}.

We claim that 
\begin{equation}\label{eq-eta}
\int_{\mathbb{R}^n\cap\{x_n>0\}}|\nabla u|^{p_0-2}\langle\nabla u,\nabla\eta\rangle dx=0
\end{equation}
and therefore, by \eqref{eq-for-utilde},
$$
\int_{\mathbb{R}^n}|\nabla \tilde{u}|^{p_0-2}\langle\nabla\tilde{u},\nabla\varphi\rangle dx=0.
$$
That is, $\tilde{u}$ is  a weak solution to $\Delta_{p_0}\tilde{u}=0$ in  $\mathbb{R}^n.$

In fact,
\begin{equation*}
\begin{split}
&\Big|\int_{\mathbb{R}^n\cap\{x_n>0\}}|\nabla u|^{p_0-2}\langle\nabla u,\nabla\eta_j-\nabla \eta\rangle dx\Big|\leq \int_{\mathbb{R}^n\cap\{x_n>0\}}|\nabla u|^{p_0-1}|\nabla\eta_j-\nabla \eta| dx\\
&\leq \left(\int_{B_R\cap\{x_n>0\}}|\nabla u|^{p_0}dx\right)^{\frac{p_0-1}{p_0}}\left(\int_{B_R\cap\{x_n>0\}}|\nabla\eta_j-\nabla \eta|^{p_0}dx\right)^{1/{p_0}}\to0,
\end{split}
\end{equation*}
thus \eqref{eq-eta} holds.

Hence, $\Delta_{p_0}\tilde{u}=0$  and $|\tilde{u}(x)|\leq L |x|$ in $\mathbb{R}^n$, with $L$  the Lipschitz constant of $\tilde{u}$, and the same result holds for $\tilde{u}_R(x)=\frac{\tilde{u}(Rx)}{R}$, for any $R>0$. Moreover, by the $C^{1,\alpha}$ estimates for the $p_0$-Laplace operator,  there exists $\alpha\in (0,1)$ such that $\tilde{u}_R\in C^{1,\alpha}(\overline{B_1})$ and for every $x,y\in B_1,$
$$
M\geq  \frac{|\nabla \tilde{u}_R(x)-\nabla \tilde{u}_R(y)|}{|x-y|^\alpha}=\frac{|\nabla \tilde{u}(Rx)-\nabla \tilde{u}(Ry)|}{|x-y|^\alpha},
$$
where $M$ and $\alpha$ depend only on $n, p_0$ and $\sup_{B_2}|\tilde{u}_R(x)|\leq 2L. $
 Thus,  it follows  that for $z$ and $\kappa$ in $B_R$,
$$
|\nabla\tilde{u}(z)-\nabla \tilde{u}(\kappa)|\leq M\frac{|z-\kappa|^{\alpha}}{R^\alpha}.
$$
In particular, fixing $z,\kappa \in B_1$ and letting $R\to \infty$, we deduce that 
$$
|\nabla\tilde{u}(z)-\nabla \tilde{u}(\kappa)|=0
$$
for every $z,\kappa\in B_1$. That is, $\nabla\tilde{u}$ is constant  and $\tilde{u}$ is linear in $B_1.$ 

On the other hand, for every $\lambda>0$,  the function $\tilde{u}_{\lambda}(x)=\frac{\tilde{u}(\lambda x)}{\lambda}$ is still a Lipschitz  solution of problem \eqref{fbliminftylinboh}. Hence, by the argument above, $\tilde{u}_\lambda$ is linear in $B_1$ and $\tilde{u}_{\lambda}(x)=\langle v_\lambda, x\rangle$ in $B_1$, for some $v_\lambda\in \mathbb{R}^n$. Thus  $\tilde{u}(\lambda x)=\langle v_\lambda, \lambda x\rangle$ in $B_1$ and therefore, $\tilde{u}(y)=\langle v_\lambda, y\rangle=\langle \nabla \tilde{u}(0),y\rangle$ in  $B_{\lambda}.$

 Since $\lambda> 0$ is arbitrary,  $\tilde{u}(y)=\langle \nabla \tilde{u}(0),y\rangle$ in $\mathbb{R}^n$. Now, denoting $C=\frac{\partial \tilde{u}(0)}{\partial y_n}$, we conclude that $u(x)=C x_n$ in  $\{x_n\ge0\}$.
\end{proof}

\section*{Acknowledgment }
The authors wish to thank Sandro Salsa for very interesting discussions about the subject of this paper.

\section*{ Data availability} This manuscript has no associated data.

\end{document}